\newtheorem{thm}{Theorem}
\newtheorem{lem}[thm]{Lemma}
\newtheorem{prop}[thm]{Proposition}
\newtheorem{Def}[thm]{Definition}
\newtheorem{cor}[thm]{Corollary}
\newtheorem{Remark}[thm]{Remark}
\newtheorem{Question}{Question}
\newcommand{\cacher}[1]{}
\title[Schnyder decompositions for regular plane graphs]{Schnyder decompositions for regular plane graphs and application to drawing}
\author[O. Bernardi]{Olivier Bernardi}
\address{Dept. of Mathematics, MIT, 77 Massachusetts Avenue, Cambridge MA 02139, USA}
\email{bernardi@math.mit.edu}
\author[\'E. Fusy]{\'Eric Fusy}
\address{LIX, \'Ecole Polytechnique, 91128 Palaiseau Cedex, France}
\email{fusy@lix.polytechnique.fr}
\thanks{Both authors supported by the European project ExploreMaps -- ERC StG 208471. First author supported by French ANR project \emph{A3}.}
\keywords{Schnyder woods, Schnyder labelling, Forest decomposition, Nash-Williams theorem, 4-regular map,  straight-line drawing, orthogonal drawing}
\date{\today}
\newcommand{\ite}{\noindent $\bullet~$}
\newcommand{\fig}[3]{\begin{figure}[h]\begin{center}\includegraphics[#1]{#2}\end{center}\caption{#3}\label{fig:#2}\end{figure}}
\newcommand{\om}{\omega}
\newcommand{\al}{\alpha}
\newcommand{\be}{\beta}
\newcommand{\ga}{\gamma}
\newcommand{\eps}{\epsilon}
\newcommand{\ee}{\mathrm{e}}
\newcommand{\vv}{\mathrm{v}}
\newcommand{\pp}{\mathrm{p}}
\newcommand{\pf}{\mathrm{part}}
\newcommand{\ff}{\mathrm{full}}
\newcommand{\mR}{\mathcal{R}}
\newcommand{\mE}{\mathcal{E}}
\newcommand{\mF}{\mathcal{F}}
\newcommand{\mT}{\mathcal{T}}
\newcommand{\NN}{\mathbb{N}}
\newcommand{\PP}{\mathbb{P}}
\def\cL{\mathcal{L}}
\def\cO{\mathcal{O}}
\def\cS{\mathcal{S}}
\def\qedclaim{\hfill$\triangle$\smallskip}
\begin{document}

\begin{abstract}
\emph{Schnyder woods} are decompositions of simple triangulations into three edge-disjoint spanning trees  crossing each other in a specific way. In this article, we generalize the definition of  Schnyder woods to $d$-angulations (plane graphs with faces of degree $d$) for all $d\geq 3$. A \emph{Schnyder decomposition} is a set of $d$ spanning forests crossing each other in a specific way, and such that each internal edge is part of exactly $d-2$ of the spanning forests.
We show that a Schnyder decomposition exists if and only if the girth of the $d$-angulation is $d$. As in the case of Schnyder woods ($d=3$), there are alternative  formulations in terms of orientations
(``fractional'' orientations when $d\geq 5$) and in terms of corner-labellings.   Moreover, the set of Schnyder decompositions of a fixed $d$-angulation of girth $d$ has a natural structure of distributive lattice.
We also study the \emph{dual} of Schnyder decompositions which are defined on $d$-regular plane graphs of mincut $d$ with a distinguished vertex $v^*$: these are sets of $d$ spanning trees rooted at $v^*$ crossing each other in a specific way and such that each edge not incident to $v^*$ is used by two trees in opposite directions.
 Additionally, for even values of $d$, we show that a subclass of Schnyder decompositions, which are called even, enjoy additional properties that yield
 a reduced formulation; in the case $d=4$, these correspond to well-studied structures on simple quadrangulations (2-orientations and partitions into 2 spanning trees).

In the case $d=4$, we obtain straight-line and orthogonal planar drawing algorithms by using the dual of even Schnyder decompositions.
 For a 4-regular plane graph $G$ of mincut $4$ with a distinguished vertex $v*$ and $n-1$ other vertices, our algorithms places the   vertices of $G\backslash v^*$  on a $(n-2) \times (n-2)$ grid according to a permutation pattern, and in the orthogonal drawing
 each of the $2n-4$ edges of $G\backslash v^*$ has exactly one bend. 
The vertex $v^*$ can be embedded at the cost of 3 additional rows and columns, and 8 additional bends.
We also describe a further compaction step for the drawing algorithms
 and show that the obtained grid-size is strongly concentrated around
 $25n/32\times 25n/32$ for a uniformly random instance with $n$ vertices.
\end{abstract}

\maketitle

\section{Introduction}
A \emph{plane graph} is a connected planar graph drawn in the plane in such a way that the edges do not cross. A \emph{triangulation} is a plane graph in which every face has degree 3. More generally, a $d$-\emph{angulation} is a plane graph such that every face has degree $d$. In~\cite{Schnyder:wood1,Schnyder:wood2}, Schnyder defined a structure for triangulations which became known as \emph{Schnyder woods}.
Roughly speaking, a Schnyder wood of a triangulation is a partition of the edges of the triangulation into three spanning trees with specific incidence relations;
see Figure~\ref{fig:triangulation} for an example, and Section~\ref{sec:Definitions} for a precise definition.
Schnyder woods have been extensively studied and have numerous applications;
they provide a simple planarity criterion~\cite{Schnyder:wood1}, yield beautiful straight-line drawing
algorithms~\cite{Schnyder:wood2}, have connections with several well-known
lattices~\cite{OB:Catalan-intervals-realizers},
and are a key ingredient in bijections
for counting triangulations~\cite{Poulalhon:triang-3connexe+boundary,FuPoScL}.\\

\fig{width=\linewidth}{triangulation}{(a) A Schnyder wood of a triangulation. (b) Crossing rule for the three spanning trees at an internal vertex. (c) Corresponding 3-orientation and clockwise labelling.}

In this article, we generalize the definition of Schnyder woods to
$d$-angulations, for any $d\geq 3$. Roughly speaking, a \emph{Schnyder decomposition} of a $d$-angulation $G$ is a covering of the internal edges of~$G$
by $d$ forests $F_1,\ldots,F_d$,  with specific crossing relations
and such that each internal edge belongs
to exactly $d-2$ of the forests; see Figure~\ref{fig:pentagulation} for an example, and Section~\ref{sec:equiv-definitions} for a precise definition. We show that a $d$-angulation admits a Schnyder decomposition if and only if it has girth $d$ (i.e., no cycle of length smaller than $d$).

\textbf{Other incarnations.}
One of the nice features of Schnyder woods on triangulations
 is that they have two alternative formulations: one in terms of orientations
 with outdegree $3$ at each internal vertex (so-called \emph{3-orientations})
 and one in terms of certain corner-labellings with labels in $\{1,2,3\}$ which
 we call \emph{clockwise labellings},
 see Figure~\ref{fig:triangulation}(c).
We show in Section~\ref{sec:equiv-definitions}
that the same feature occurs for any $d\geq 3$.
Precisely, on a fixed $d$-angulations of girth $d$, we give bijections between Schnyder decompositions, some generalized orientations called $d/(d-2)$-\emph{orientations}, and certain \emph{clockwise labellings} of corners with colors in $\{1,\ldots,d\}$. The $d/(d-2)$-orientations recently appeared in~\cite{Bernardi-Fusy:dangulations}, as a key
ingredient in a bijection to count $d$-angulations of girth $d$.
They are also a suitable formulation to show
that the set of Schnyder decompositions on a fixed $d$-angulation of girth $d$
is a distributive lattice (Proposition~\ref{cor:lattice}).

\fig{width=\linewidth}{pentagulation}{(a) clockwise labelling of a 5-angulation. (b) Crossing rule for the tree $i$ at an internal vertex. (c) The five forests forming the Schnyder decomposition.}

\textbf{Duality}. Schnyder decompositions can also be studied in a dual setting.
In Section~\ref{sec:duality} we show that if $G$ is a $d$-angulation of girth $d$ and $G^*$ is the dual graph
(which is a $d$-regular plane graph of mincut $d$ with a \emph{root-vertex} $v^*$),
then the combinatorial structures of $G^*$
 dual to the Schnyder decompositions of $G$ are $d$-tuples of spanning trees $T_1^*,\ldots,T_d^*$ such that every edge incident to $v^*$ belongs to one spanning tree, while the other edges belong to two spanning trees in opposite directions;
 in addition around a non-root vertex $v$ the edges leading $v$ to its parent in  $T_1^*,\ldots,T_d^*$ appear in clockwise order\footnote{Actually, the existence of $d$ spanning trees such that every edge non incident to the root-vertex is used twice is granted by the Nash-Williams Theorem (even for non-planar $d$-regular graphs of mincut $d$). Additionally, the decomposition can be taken in such a way that every edge is used once in each direction (for the orientation of the trees toward the root-vertex) by a ``directed version'' of the Nash-Williams theorem due to Edmonds~\cite{Edmonds:decomposition-forests}. However, these theorems do not grant any crossing rule for the spanning trees.}. A duality property was well-known in the case $d=3$, operating
 even more generally on 3-connected plane graphs~\cite{F01}.

\textbf{Bipartite case}. In Section~\ref{sec:even}, we show that when $d$ is even,
$d=2p$, there is a subclass of Schnyder decompositions that
enjoy additional properties. The so-called \emph{even Schnyder decompositions}
 can be reduced to  $p$-tuples of spanning forests with specific incidence relations
 and such that each internal edge belongs to $p-1$ forests; see Figure~\ref{fig:hexangulation} for an example and Theorem~\ref{thm:compact-even-Schnyder} for precise properties. The dual structures are also
characterized, in a reduced form they yield a partition of the edges (except for $p$ of them) of the dual graph
 into $p$ spanning trees oriented toward the root-vertex.
In the case of quadrangulations
  ($p=2$) even Schnyder decompositions correspond to specific
  partitions into two non-crossing  spanning trees; these partitions have been
  introduced in~\cite{Fraysseix:Topological-aspect-orientations}
  under the name of \emph{separating decompositions} and
  are in bijection with
   2-\emph{orientations} (i.e., orientations of internal edges such that every internal vertex  has outdegree $2$).
   The existence of 2-orientations on simple quadrangulations
   is also proved in~\cite{Mendez:these,Fraysseix:Topological-aspect-orientations}.

\textbf{Graph drawing.}
Finally, as an application, we present  in Section~\ref{sec:drawing}
a linear-time orthogonal drawing algorithm for 4-regular plane graphs of mincut $4$, which
 relies on the dual of even Schnyder decompositions.
 For a $4$-regular plane graph $G$ of mincut $4$, with $n$ vertices ($2n$ edges, $n+2$ faces) one of which is called the \emph{root-vertex}, the algorithm places each non-root vertex of $G$ using face-counting operations, that is, one obtains the coordinate of a vertex $v$ by counting the faces in areas delimited by the paths between $v$ and the root-vertex in the spanning trees of the dual Schnyder decompositions. Such face-counting operations were already used in Schnyder's original procedure on triangulations~\cite{Schnyder:wood2},  and in a recent straight-line drawing algorithm of Barri\`ere and Huemer~\cite{Barriere-Huemer:4-Labelings-quadrangulation} which relies on separating decompositions of quadrangulations and is closely related to our algorithm (see
Section~\ref{sec:eq_line}).
The placement of vertices given by our algorithm is such that each line and column of the grid
$(n-2)\times (n-2)$ contains exactly one vertex\footnote{The placement of vertices thus follows a permutation pattern, and it is actually closely related to a recent bijection~\cite{Bonichon:Baxter-permutations} between Baxter permutations and \emph{plane bipolar orientations} (which are known to be in bijection with even Schnyder decompositions of quadrangulations~\cite{Fraysseix:Topological-aspect-orientations,Felsner:Baxter-family}).},
and each edge has exactly one bend (see Figure~\ref{fig:tetravalent}).
Placing the root-vertex and its $4$ incident edges requires 3 more columns, 3 more rows,
and $8$ additional bends (1 bend for two of the edges and 3 bends for the two other ones).
In addition we present a compaction step allowing one to reduce the grid size.
We show that for a uniformly random instance of size $n$, the grid size after compaction
is strongly concentrated around $25n/32\times 25n/32$ (similar techniques for analyzing the average grid-size were used in~\cite{BGHPS04,Fu07b} for  triangulations).

For comparison with existing orthogonal drawing algorithms: our algorithm for $4$-regular graphs of mincut $4$ with $n$ vertices yields a grid size (width $\times$ height) of $(n+1)\times (n+1)$ in the worst-case (which is
optimal~\cite{Ta91}), and concentrated around $25n/32\times 25n/32$ in the (uniformly) random case,
there is \emph{exactly 1 bend}
per edge except for two edges (incident to the root-vertex) with 3 bends, so there is a total
of $2n+4$ bends. In the less restrictive case of  loopless biconnected $4$-regular graphs  Biedl and Kant's algorithm~\cite{Biedl98abetter} has worst case grid size $(n+1)\times(n+1)$, with at most $2n+4$ bends in total and at most $2$ bends per edge. The worst-case values are the same in Tamassia's algorithm~\cite{Ta87}
(analyzed by Biedl in~\cite{Bi06}) but with the advantage that the total number of bends obtained is best possible for each fixed loopless biconnected $4$-regular plane graphs.  In the more restrictive case of  $3$-connected $4$-regular
 graphs,  Biedl's algorithm~\cite{Bi06}, which revisits an algorithm by Kant~\cite{Ka96}, has worst case grid size $(\tfrac{2}{3}n+1)\times (\tfrac{2}{3}n+1)$, with $\lceil \tfrac{4}{3}n\rceil +4$ bends in total, and at most $2$ bends per edge except for the root-edge having at most 3 bends.
We refer the reader to~\cite{Bi98} for a survey on the worst-case grid size of orthogonal drawings.

\medskip

\section{Plane graphs and fractional orientations of $d$-angulations}\label{sec:Definitions}
In this section we recall some classical definitions about plane graphs, and prove the existence of $d/(d-2)$-orientations for $d$-angulations.

\subsection{Graphs, plane graphs and $d$-angulations}~\\
Here \emph{graphs} are finite; they have no loop but can have multiple edges. For an edge $e$, we use the notation $e=\{u,v\}$ to indicate that $u,v$ are the endpoints
of~$e$. The edge $e=\{u,v\}$ has two \emph{directions} or \emph{arcs}; the arc with origin $u$ is denoted $(u,e)$.
A graph is $d$-\emph{regular} if every vertex has degree $d$.
The \emph{girth} of a graph is the length of a shortest cycle contained in the graph.
The \emph{mincut} of a connected graph (also called \emph{edge-connectivity})
is the minimal number of edges that one needs to delete in order to disconnect the graph.

A \emph{plane graph} is a connected planar graph drawn in the plane in such a way that the edges do not cross.
A \emph{face} is a connected component of the plane cut by the drawing of the graph. 
The numbers $v(G),e(G),f(G)$ of vertices, edges and faces of a plane graph satisfy the \emph{Euler relation:} $v(G)-e(G)+f(G)=2$.
A \emph{corner} is a triple $(v,e,e')$, where $e,e'$ are consecutive edges in clockwise order around $v$.
A corner is said to be \emph{incident} to the face which contains it. 
The \emph{degree} of a face or a vertex is the number of incident corners.
The bounded faces are called \emph{internal} and the unbounded one is called \emph{external}. 
A vertex, an edge or a corner is said to be \emph{external} if it is incident to the external face and \emph{internal} otherwise.

A  $d$-\emph{angulation} is a plane graph where every face has degree $d$. \emph{Triangulations} and \emph{quadrangulations} correspond to $d=3$ and $d=4$ respectively. Clearly, a $d$-angulation has girth at most $d$, and if it has girth $d$ then it has $d$ distinct external vertices (indeed
if two of the $d$ corners in the external face were incident to the same vertex then
one could extract from the contour of $f$ a cycle of length smaller than $d$).
The incidence relation between faces and edges in a $d$-angulation $G$ gives $d\, f(G)=2\,e(G)$. Combining this equation with the Euler relation gives
\begin{equation}\label{eq:incidence}
\frac{e(G)-d}{v(G)-d}=\frac{d}{d-2}.
\end{equation}
 Note that when the external face has $d$ distinct vertices (which is the case when the girth is $d$), the left-hand-side of~\eqref{eq:incidence} is the ratio between internal edges and internal vertices.

\subsection{Existence of $d/(d-2)$-orientations}~\\
Let $G$ be a graph and let $k$ be a positive integer. A $k$-\emph{fractional orientation} of $G$ is a function $\Omega$ from the arcs of $G$ to the set $\{0,1,2,\ldots,k\}$ such that the values of the two arcs constituting an edge $e=\{u,v\}$ add up to $k$: $\Omega(u,e)+\Omega(v,e)=k$. We identify 1-orientations with the classical notion of orientations: each edge $e=\{u,v\}$, with $(u,e)$ the arc of value $\Omega=1$, is directed from $u$ to $v$.
(More generally, it is possible to think of~$k$-fractional orientations as orientations of the graph $k\,G$ obtained from $G$ by replacing each edges by $k$ \emph{indistinguishable} parallel edges.) The \emph{outdegree} of a vertex $v$ is the sum of the value of~$\Omega$ over the arcs having origin $v$.
\begin{Def}
A $j/k$\emph{-orientation} of a plane graph is a $k$-fractional orientation of its internal edges (the external edges are not oriented) such that every internal vertex has outdegree $j$ and every external vertex has outdegree $0$. A $j$\emph{-orientation} is a $j/1$-orientation (which can be seen as a non-fractional orientation of the internal edges).
\end{Def}

Note that, for $n\geq 1$, the $j/k$-orientations identify with a subset of the $(nj)/(nk)$-orientations (those where the value of every arc is a multiple of $n$). Given relation~\eqref{eq:incidence}, it is natural to look for $d/(d-2)$-orientations of~$d$-angulations. For $d=3$, $d/(d-2)$-orientations correspond to the notion of~$3$\emph{-orientations} of triangulations considered in the literature. However, for $d=4$ the $d/(d-2)$-orientations are more general than the 2\emph{-orientations} of quadrangulations considered for instance in~\cite{Bonichon:Baxter-permutations,Felsner:Baxter-family}. The existence of~$3$-orientations for simple  triangulations  and the existence of 2-orientations for simple quadrangulations were proved in~\cite{Schnyder:wood2} and~\cite{Mendez:these} respectively. We now prove a more general existence result.
\begin{thm}\label{thm:exists}
A $d$-angulation admits a $d/(d-2)$-orientation if and only if it has girth $d$. Moreover, if $d=2p$ is even, then any $d$-angulation of girth $d$ also admits a $p/(p-1)$-orientation.
\end{thm}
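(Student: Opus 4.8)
The plan is to reduce the statement to the classical criterion for the existence of an orientation of a (multi)graph with prescribed outdegrees. Recall (in the spirit of Hakimi, and as used by Felsner for $\alpha$-orientations) that a graph $H$ admits an orientation in which every vertex $v$ has outdegree exactly $\alpha(v)$ if and only if $\sum_{v}\alpha(v)=|E(H)|$ and, for every set $S$ of vertices, the number $e_H(S)$ of edges of $H$ with both endpoints in $S$ satisfies $e_H(S)\le\sum_{v\in S}\alpha(v)$. I would apply this to the multigraph $H$ obtained from the internal edges of the $d$-angulation $G$ by giving each of them multiplicity $d-2$, with the prescription $\alpha(v)=d$ for $v$ internal and $\alpha(v)=0$ for $v$ external; an orientation of $H$ with these outdegrees is exactly a $d/(d-2)$-orientation. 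Writing $m(S)$ for the number of internal edges of $G$ with both endpoints in $S$ (so that $e_H(S)=(d-2)\,m(S)$), everything reduces to checking $d\,(v(G)-d)=(d-2)(e(G)-d)$ together with $(d-2)\,m(S)\le d\,|S\cap V_{\mathrm{int}}|$ for all $S$. The whole role of the girth is to control the subset condition through the standard planar bound: a connected planar (multi)graph with $n$ vertices, $m$ edges, girth $\ge g\ge 3$ and at least one cycle satisfies $m\le\frac{g}{g-2}(n-2)$, which I would justify, bridges included, by a block decomposition.

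For the sufficiency I assume girth $d$. The sum condition is exactly \eqref{eq:incidence} rewritten, using that girth $d$ forces the external face to be a simple $d$-cycle with $d$ external vertices and $d$ external edges. For the subset condition, I first note that enlarging $S$ by external vertices can only increase $m(S)$ while leaving $d\,|S\cap V_{\mathrm{int}}|$ unchanged (external vertices carry $\alpha=0$, and girth $d$ forbids chords of the external face), so it suffices to treat sets $S$ containing all external vertices; for such $S$ the external $d$-cycle lies in $G[S]$. I then argue component by component of $G[S]$. A component $C$ made of internal vertices only is either a tree or a planar graph of girth $\ge d$ with a cycle, and in both cases the bound above gives $(d-2)\,m(C)\le d\,|V(C)|$. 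The single component $C_0$ carrying the external cycle has $n_0=d+t_0$ vertices and $d+m(C_0)$ edges (the $d$ external edges plus the internal ones), and the girth bound $d+m(C_0)\le\frac{d}{d-2}(n_0-2)$ simplifies to exactly $(d-2)\,m(C_0)\le d\,t_0$. Summing over components yields the subset condition, so a $d/(d-2)$-orientation exists.

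For the necessity I use that a $d$-angulation always has girth at most $d$, so it suffices to show that a cycle of length $\ell<d$ obstructs the orientation by violating the subset condition. I would take a shortest cycle $C$, which is chordless, and let $D$ be the disk it bounds on the side not containing the external face; since a single face has degree $d>\ell$, the disk must contain $a\ge 1$ interior vertices. A short Euler computation for the $d$-angulated disk (internal faces of degree $d$, outer boundary of length $\ell$) gives $(d-2)\,E_{\bar D}=d\,a+\ell(d-1)-d$. Taking $S=V(G)\setminus\mathrm{int}(D)$ and using the critical-density identity $(d-2)(e(G)-d)=d\,(v(G)-d)$ from \eqref{eq:incidence}, the quantity $(d-2)\,m(S)-d\,|S\cap V_{\mathrm{int}}|$ collapses to $d-\ell>0$. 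Hence the subset condition fails and no $d/(d-2)$-orientation exists, proving girth $=d$.

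Finally, for the even case $d=2p$, I would observe that $\frac{p}{p-1}=\frac{2p}{2p-2}=\frac{d}{d-2}$. Consequently the two conditions governing the existence of a $p/(p-1)$-orientation, namely $p\,(v(G)-d)=(p-1)(e(G)-d)$ and $(p-1)\,m(S)\le p\,|S\cap V_{\mathrm{int}}|$ for all $S$, are literally the same equality and inequalities already verified for $d/(d-2)$-orientations, since both depend only on the common ratio $\frac{d}{d-2}$. Thus the same verification grants a $p/(p-1)$-orientation whenever the girth is $d$. I expect the main obstacle to be the bookkeeping in the sufficiency step: controlling $m(S)$ for subsets whose induced subgraph is disconnected or has bridges, where the clean bound $m\le\frac{d}{d-2}(n-2)$ can fail locally, which is precisely why the reduction to $S\supseteq V_{\mathrm{ext}}$ and the separate treatment of the external component are needed.
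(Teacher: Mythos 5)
Your proof is correct and follows essentially the same route as the paper's: both reduce the theorem to the classical existence criterion for orientations with prescribed outdegrees (the paper's Lemma~\ref{lem:condition-existence} and Corollary~\ref{cor:condition-existence}), both verify the subset condition via the Euler/girth density bound, and your necessity computation $(d-2)E_{\bar D}=da+\ell(d-1)-d$ is exactly the paper's $d\vv'=(d-2)\ee'+d-\ell$ read from the complementary vertex set. The execution differs only at the boundary: the paper introduces ``quasi $d/(d-2)$-orientations'' (external vertices temporarily get outdegree $d-2$ and the external edges are oriented, then stripped off), whereas you keep $\alpha\equiv 0$ on external vertices, reduce to sets $S$ containing all external vertices, and treat the components of $G[S]$ separately. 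Your version is in fact more careful on one point where the paper is terse: the paper's claim that every face of $G_S$ has degree at least $d$ (hence $d|S|-2d\geq (d-2)|E_S|$) fails when $G_S$ is acyclic or has bridges, and your separate handling of tree components and the block decomposition behind the bound $m\leq\frac{g}{g-2}(n-2)$ patch exactly this. One small imprecision on your side: in the necessity step the identity $(d-2)m_{\mathrm{int}}=d\,n_{\mathrm{int}}$ should be invoked as the degree-sum condition that is itself necessary for a $d/(d-2)$-orientation, rather than derived from \eqref{eq:incidence}, since a $d$-angulation of girth less than $d$ need not have a simple external face, so $m_{\mathrm{int}}$ and $n_{\mathrm{int}}$ need not equal $e(G)-d$ and $v(G)-d$; with that rephrasing (either the degree-sum condition fails outright, or it holds and your computation shows the subset condition fails by $d-\ell>0$) the argument is airtight. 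Finally, your remark that the even case is immediate because both criteria depend only on the common ratio $\frac{d}{d-2}=\frac{p}{p-1}$ is a clean justification of the paper's closing sentence that ``the ingredients are exactly the same.''
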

In order to prove Theorem~\ref{thm:exists}, we first give a general existence criteria for $k$-fractional orientations with prescribed outdegrees. Let $H=(V,E)$ be a graph, let $\alpha: V\to \mathbb{N}$, and let $k\geq 1$.
An $(\alpha,k)$-orientation is a $k$-fractional orientation such that every vertex $v$ has outdegree $\alpha(v)$.
A subset $S$ of vertices is said to be \emph{connected} if the subgraph $G_S=(S,E_S)$ is connected, where $E_S$ is the set of edges with both ends in $S$.

\begin{lem}[Folklore, see~\cite{Felsner:lattice}]\label{lem:condition-existence}
Let $H=(V,E)$ be a graph and let $\al:V\mapsto \NN$. There exists an orientation of $H$ such that every vertex $v$ has outdegree $\al(v)$ (i.e., an $(\alpha,1)$-orientation) if and only if
\begin{enumerate}
\item[(a)] $\sum_{v\in V}\al(v)=|E|$
\item[(b)] for all connected subset of vertices $S\subseteq V$, $\sum_{v\in S}\al(v)\geq |E_S|$, where $E_S$ is the set of edges with both ends in $S$.
\end{enumerate}
\end{lem}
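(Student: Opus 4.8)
The plan is to treat the two directions separately; necessity is a direct counting argument, while sufficiency is the crux, and I would obtain it by reducing to a degree-constrained bipartite matching.

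For the ``only if'' direction, assume an $(\alpha,1)$-orientation of $H$ is given. Summing outdegrees over all vertices counts every edge exactly once, at its tail, so $\sum_{v\in V}\alpha(v)=\sum_{v\in V}\mathrm{outdeg}(v)=|E|$, which is~(a). For~(b), fix any $S\subseteq V$: every edge of $E_S$ has both endpoints in $S$, hence its tail lies in $S$, giving $\sum_{v\in S}\alpha(v)=\sum_{v\in S}\mathrm{outdeg}(v)\geq|E_S|$. Connectivity of $S$ is not needed here; it only reduces the number of inequalities one must check, since for a disconnected $S$ inequality~(b) follows by summing over the connected components of $G_S$ (as $E_S$ is the disjoint union of their edge sets).

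For the ``if'' direction, I would encode an orientation as a choice, for each edge, of one endpoint to serve as its tail, and look for a choice in which vertex $v$ is chosen exactly $\alpha(v)$ times. Model this as a bipartite graph $B$ with parts $E$ and $V$, joining each $e\in E$ to its two endpoints, and assign each $v\in V$ the capacity $\alpha(v)$ (equivalently, split $v$ into $\alpha(v)$ indistinguishable copies). A tail-assignment using each $v$ at most $\alpha(v)$ times is exactly a matching of $B$ saturating the part $E$. By the defect form of Hall's theorem, such a matching exists if and only if $\sum_{v\in N(F)}\alpha(v)\geq|F|$ for every $F\subseteq E$, where $N(F)$ denotes the set of endpoints of edges of $F$. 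To verify this, put $S=N(F)$: every edge of $F$ lies in $E_S$, so $|F|\leq|E_S|$, and~(b) applied to $S$ (valid for arbitrary $S$ by the remark above) gives $\sum_{v\in S}\alpha(v)\geq|E_S|\geq|F|$. Hence the matching exists, yielding an orientation with every edge oriented and $\mathrm{outdeg}(v)\leq\alpha(v)$ for all $v$; since $\sum_v\mathrm{outdeg}(v)=|E|=\sum_v\alpha(v)$ by~(a), all these inequalities are equalities and the orientation is an $(\alpha,1)$-orientation.

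The step needing the most care is sufficiency: one must set up the bipartite model so that the vertex capacities $\alpha(v)$ are respected (via vertex-splitting, or equivalently through an integral max-flow/min-cut argument on a source-to-sink network with edge-nodes of capacity $1$ and vertex-arcs of capacity $\alpha(v)$), and then recognise that Hall's inequality for an arbitrary edge set $F$ is precisely hypothesis~(b) for $S=N(F)$. As an alternative, more self-contained route to sufficiency, I would use a reversal argument that exploits the connectivity hypothesis directly: start from any orientation of $H$; while some vertex $u$ has $\mathrm{outdeg}(u)>\alpha(u)$, let $R$ be the set of vertices reachable from $u$ along directed arcs. Then $R$ is connected and closed under out-arcs, so $\sum_{v\in R}\mathrm{outdeg}(v)=|E_R|$; if every $v\in R$ satisfied $\mathrm{outdeg}(v)\geq\alpha(v)$ we would get $|E_R|>\sum_{v\in R}\alpha(v)$, contradicting~(b) for the connected set $R$. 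Hence $R$ contains a deficient vertex $w$ with $\mathrm{outdeg}(w)<\alpha(w)$, and reversing a directed path from $u$ to $w$ lowers $\mathrm{outdeg}(u)$ by one and raises $\mathrm{outdeg}(w)$ by one, strictly decreasing the total excess $\sum_v\max(0,\mathrm{outdeg}(v)-\alpha(v))$; the procedure therefore terminates at the desired orientation.
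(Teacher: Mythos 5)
Your proposal is correct, but note that the paper itself gives no proof of this lemma: it is quoted as folklore with a pointer to the literature (the paper only proves the corollary for $k$-fractional orientations by reduction to it). So there is no in-paper argument to compare against; what you have done is supply the missing folklore proof, twice over, and both routes are sound. The necessity direction is handled exactly as one would expect, and you correctly observe that connectivity of $S$ plays no role there, since for disconnected $S$ the inequality follows by summing over the components of $G_S$ (using that $E_S$ is the disjoint union of the components' edge sets). In the Hall-type sufficiency proof, the one subtle point is precisely that hypothesis (b) is only assumed for connected $S$ while Hall's condition must be checked for arbitrary $F\subseteq E$ with $S=N(F)$ possibly disconnected; your component-decomposition remark closes this gap cleanly. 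The final bootstrapping step (capacitated matching gives $\mathrm{outdeg}(v)\leq\alpha(v)$, then condition (a) forces equality everywhere) is also correctly executed. Your alternative path-reversal argument is, if anything, the more natural companion to the paper: it exploits the connectivity hypothesis directly (the reachable set $R$ is automatically connected and out-closed, so $\sum_{v\in R}\mathrm{outdeg}(v)=|E_R|$), it is self-contained, and it is essentially the standard augmenting-path viewpoint that underlies the flow/lattice-theoretic treatment in the cited reference. Either proof would serve; the reversal argument has the added virtue of being algorithmic, which is relevant to the computational questions raised in the paper's conclusion.
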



\begin{cor}\label{cor:condition-existence}
Let $H=(V,E)$ be a graph and let $\al:V\mapsto \NN$. There exists a  $(\al,k)$-orientation of $H=(V,E)$ if and only if
\begin{enumerate}
\item[(a)] $\sum_{v\in V}\al(v)=k\,|E|$
\item[(b)] for all connected subset of vertices $S\subseteq V$, $\sum_{v\in S}\al(v)\geq k\,|E_S|$, where $E_S$ is the set of edges with both ends in $S$.
\end{enumerate}
\end{cor}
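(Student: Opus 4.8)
The plan is to reduce the statement to Lemma~\ref{lem:condition-existence} by passing to the multigraph $kH$ obtained from $H$ by replacing each edge with $k$ parallel copies, exactly as suggested in the discussion preceding the definition of $j/k$-orientations. First I would make precise the correspondence between $k$-fractional orientations of $H$ and ordinary $1$-orientations of $kH$: given a $k$-fractional orientation $\Omega$, for each edge $e=\{u,v\}$ I orient $\Omega(u,e)$ of its $k$ parallel copies in $kH$ from $u$ to $v$ and the remaining $k-\Omega(u,e)$ copies from $v$ to $u$. Since the $k$ copies are indistinguishable, the data of such an orientation of $kH$ is precisely the number of copies pointing in each direction, so this is a genuine bijection (and not a $k$-to-one map) between $k$-fractional orientations of $H$ and orientations of $kH$. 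It clearly preserves outdegrees, so $\Omega$ is an $(\alpha,k)$-orientation of $H$ if and only if the associated orientation of $kH$ gives every vertex $v$ outdegree $\alpha(v)$.

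Next I would apply Lemma~\ref{lem:condition-existence} to $kH$ with the same function $\alpha$; note that $kH$ is an admissible graph in our sense, since it has no loops and multiple edges are allowed. It then remains to translate the two conditions of the lemma. For condition (a), the edge count satisfies $|E(kH)|=k\,|E|$, so $\sum_{v\in V}\alpha(v)=|E(kH)|$ becomes exactly condition (a) of the corollary. For condition (b), I would observe that a vertex subset $S$ is connected in $kH$ if and only if it is connected in $H$ (adding parallel edges does not affect connectivity), and that the set of edges of $kH$ with both ends in $S$ consists of exactly $k$ copies of each edge of $E_S$, so $|E(kH)_S|=k\,|E_S|$. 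Hence condition (b) for $kH$ is precisely condition (b) of the corollary, and the equivalence follows.

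I do not expect a genuine obstacle here: the argument is a routine reduction, and the only points needing care are the indistinguishability of the parallel edges (which is what makes the correspondence a bijection) together with the two bookkeeping identities $|E(kH)|=k\,|E|$ and $|E(kH)_S|=k\,|E_S|$ and the invariance of connectivity under the passage from $H$ to $kH$.
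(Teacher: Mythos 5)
Your proposal is correct and follows essentially the same route as the paper: the paper's proof also reduces the statement to Lemma~\ref{lem:condition-existence} applied to the multigraph $kH$, merely stating the reduction in one line where you spell out the bijection and the bookkeeping identities. (Your version in fact implicitly corrects a small slip in the paper, which asks for outdegree $k\al(v)$ in $kH$ where $\al(v)$ is what is needed to match the conditions.)
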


\begin{proof}[Proof of Corollary~\ref{cor:condition-existence}]
The conditions are clearly necessary. 
We now suppose that the mapping $\al$ satisfies the conditions and want to prove that an $(\al,k)$-orientation exists. 
For this, it suffices to prove the existence of an orientation of the graph $k\,H$ (obtained from $H$ by replacing each edge by $k$ edges in parallel) with outdegree $k\al(v)$ for each vertex $v$. This is granted by Lemma~\ref{lem:condition-existence}.
\end{proof}

\begin{proof}[Proof of Theorem~\ref{thm:exists}]
We first show that having girth $d$ is necessary. Let $G$ be a $d$-angulation admitting a $d/(d-2)$-orientation. Suppose  for contradiction that $G$ has a simple cycle $C$ of length $\ell< d$. Let $G'$ be the plane subgraph made of the cycle $C$ and all the edges and vertices lying inside $C$. The Euler relation and the incidence relation between faces and edges of the plane graph $G'$ imply that the numbers $\vv'$ and $\ee'$ of vertices and edges lying strictly inside $C$ (not on $C$) satisfy $d\vv'=(d-2)\ee'+d-\ell>(d-2)\ee'$. This yields a contradiction since $d\vv'$ is the sum of the required outdegrees for the vertices lying strictly inside $C$, while $(d-2)\ee'$ is the sum of the values of the arcs incident to these vertices.

We now prove that having girth $d$ is sufficient. Let $G=(V,E)$ be a $d$-angulation of girth $d$. Since $G$ has girth $d$, it has $d$ distinct external vertices and $d$ distinct external edges. We call \emph{quasi $d/(d-2)$-orientation} of $G$ a $(d-2)$-fractional orientation of all the edges of $G$ (including the external ones) such that every internal vertex has outdegree $d$ and every external vertex has outdegree $d-2$. We first prove the existence of a quasi $d/(d-2)$-orientation for $G$ using  Corollary~\ref{cor:condition-existence}. First of all, Equation \eqref{eq:incidence} shows that Condition (a) holds. We now consider a connected subset of vertices $S$, and the induced plane subgraph $G_S=(S,E_S)$. Since $G$ has girth $d$, every face of $G_S$ has degree at least $d$. Hence the  Euler relation and the incidence relation  between faces and edges imply that $d\,|S|-2d \geq (d-2)\, |E_S|$. Let $r$ be the number of external vertices of $G$ in $S$. Since $r\leq d$, we get $d\,|S|-2r\geq (d-2)\, |E_S|$, which is exactly Condition (b) for the connected set $S$. Hence $G$ admit a quasi $d/(d-2)$-orientation $O$. We now consider the orientation $O'$ obtained from $O$ by forgetting the orientation of the external edges, so that $O'$ is a $k$-fractional orientation of the internal edges such that the outdegree of every internal vertex is $d$. Let $s$ and $s'$ be the sum of the outdegrees of the external vertices in $O$ and $O'$ respectively. By definition of  quasi $d/(d-2)$-orientations, $s=(d-2)\, d$ (because each external vertex has outdegree $d-2$), and  $s-s'=(d-2)\, d$ (because each external edge contributes  $d-2$ to $s$). Thus $s'=0$, which ensures that  $O'$ is a  $d/(d-2)$-orientation.
 
The ingredients are exactly the same to show that a $2p$-angulation admits a $p/(p-1)$-orientation if and only if it has girth $2p$.
\end{proof}

\section{Three incarnations of Schnyder decompositions}\label{sec:equiv-definitions}
In this section we define \emph{Schnyder decompositions} and  certain labellings of the corners which we call \emph{clockwise labellings}.
We show that on a fixed $d$-angulation $G$ of girth $d$, Schnyder decompositions, clockwise labellings and $d/(d-2)$-orientations
are in bijective correspondence;
hence the three definitions are actually three incarnations of the same structure.

 We start with the definition of Schnyder decompositions, which is illustrated in Figure~\ref{fig:pentagulation}.
In the following, $d$ is an integer greater or equal to 3 and $G$ is a $d$-angulation with $d$ distinct external vertices $u_1,\ldots,u_d$ in clockwise order around the external face. We will denote by $[d]$ the set of integers $\{1,\ldots,d\}$ \emph{considered modulo $d$} (i.e., the addition and subtraction operations correspond
to cyclic shifts).
\begin{Def}\label{def:Schnyder}
A \emph{Schnyder decomposition} of~$G$ is a covering of the internal edges of $G$ by $d$ oriented forests $F_1,\ldots,F_d$
(one forest for each color $i\in [d]$) such that
\begin{enumerate}
\item[(i)] Each internal edge $e$ appears in $d-2$ of the forests.
 \item[(ii)] For each $i$ in $[d]$, the forest $F_i$ spans all vertices except $u_i,u_{i+1}$;
it is made of $d-2$ trees each containing one of the external vertices $u_j,j\neq i,i+1$, and the tree containing $u_j$ is oriented toward $u_j$ which is considered as its \emph{root}.
\item[(iii)] Around any internal vertex $v$, the outgoing arcs of colors $1,\ldots,d$ appear in clockwise order around $v$ (some of these arcs can be on the same edge). Moreover, for $i\in[d]$,
     calling $e_i$ the edge bearing the outgoing arc of color $i$, the incoming arcs of color $i$ are strictly between $e_{i+1}$ and $e_{i-1}$ in clockwise order around $v$.
\end{enumerate}
We denote the decomposition by $(F_1,\ldots,F_d)$.
\end{Def}


The classical definition of \emph{Schnyder woods} coincides with our definition of \emph{Schnyder decomposition} for triangulations (case $d=3$).
The \emph{colors} of an edge are the colors of the $d-2$ forests containing this edge, the \emph{colors}
 of an arc $(u,e)$ (also called \emph{colors going out of $u$ along $e$})
 denote the colors of the oriented forests
 using this arc.
The \emph{missing colors of an edge}
are the colors of the two forests not containing this edge.

\begin{Remark} \label{rk:immediate-csq-ii}
Condition (iii) in Definition~\ref{def:Schnyder} immediately implies two properties:
\begin{enumerate}
\item[(a)] If an internal edge has $i,j$ as missing colors, then the colors $i+1, i+2,\ldots ,j-1$ are all in one direction of~$e$, while the colors $j+1, j+2,\ldots ,i-1$ are all in the other direction of~$e$.
\item[(b)] Let $v$ be an internal vertex and $i\in[d]$ be a color. Let $e,e',f,f'$ be edges incident to $v$ with $e,e'$ of color $i$ and $f,f'$ of color $i+1$. If $e,f$ are incoming, $e',f'$ are outgoing and $e,e',f$ appear in clockwise order around $v$ (with possibly $e=f$), then the outgoing edge $f'$ appears between $e'$ and $f$ in clockwise order around $v$ (with possibly $e'=f'$). As a consequence, \emph{a directed path of color $i+1$ cannot cross a directed path of color $i$ from right to left} (however, a crossing from left to right is possible).
\end{enumerate}
\end{Remark}

We now define \emph{clockwise labellings}. An example is shown in Figure~\ref{fig:pentagulation}(a).

\begin{Def}
A \emph{clockwise labelling} of~$G$ is the assignment  to each corner of a color, or \emph{label} (we use the terms ``color'', and ``label'' synonymously), in $[d]$ such that:
\begin{enumerate}
\item[(i)] The colors $1,2,\ldots,d$ appear in clockwise order\footnote{The root-face, which is unbounded, has a special role; when we say
    ``in clockwise order around the root-face''
    we mean that we walk along
     the contour of the root-face with the root-face on the left. In contrast, when we
    say ``in clockwise order'' around a non-root face $f$ we mean that we walk along the contour of
    $f$ with $f$ on our right.} around each face of~$G$.
\item[(ii)] For all $i$ in $[d]$, the corners incident to the external vertex $u_i$ are of color $i$.
\item[(iii)] In clockwise order around each internal vertex, there is exactly one corner having a larger color than the next corner.
\end{enumerate}
\end{Def}

\begin{thm}\label{thm:equiv} Let $G$ be a $d$-angulation. The sets of Schnyder decompositions, clockwise labellings, and
$d/(d-2)$-orientations of~$G$ are in bijection.
\end{thm}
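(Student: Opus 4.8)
The plan is to establish Theorem~\ref{thm:equiv} by constructing explicit bijections between the three structures, pivoting through the clockwise labellings as the central object, since labellings encode the local combinatorial data most transparently. First I would describe the map from a clockwise labelling to a $d/(d-2)$-orientation. The key observation is Condition (iii): around each internal vertex, reading corners in clockwise order, there is exactly one descent (one place where the color drops). This singles out a distinguished ``starting'' corner, and the $d$ colors must therefore appear in clockwise order starting from that descent. I would assign to each arc $(v,e)$ a value in $\{0,\ldots,d-2\}$ determined by how many of the colors $\{1,\ldots,d\}$ ``pass through'' the edge $e$ as outgoing from $v$; concretely, the two corners flanking the edge $e$ at $v$ carry colors that delimit an arc of the cyclic color-sequence, and the outdegree contribution of $(v,e)$ is the number of colors strictly enclosed. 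The single-descent condition guarantees the outdegrees sum correctly to $d$ at each internal vertex, and Condition (ii) forces external vertices to have outdegree $0$; Condition (i) (the $d$ colors around each face) is exactly what makes this assignment consistent across the two endpoints of each edge, i.e.\ makes the two arc-values sum to $d-2$.

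Next I would build the map from a clockwise labelling to a Schnyder decomposition. The forest $F_i$ of color $i$ should consist of those arcs across which color $i$ passes outward; equivalently, an internal edge $e=\{u,v\}$ carries color $i$ out of $u$ precisely when $i$ lies in the cyclic interval of colors separating the two corners of $e$ at $u$. I would verify the three defining properties of Definition~\ref{def:Schnyder} in turn: property (i) follows from the face condition combined with the count showing each edge carries exactly $d-2$ colors; the crossing rule (iii) is essentially a restatement of the single-descent condition translated from corners to arcs; and the spanning/rooting property (ii) requires checking that following color $i$ out of every internal vertex leads, without cycles, to the external vertex $u_i$. The acyclicity is where the global planar structure enters: I would argue (by a standard potential/face-counting argument, or by appealing to Remark~\ref{rk:immediate-csq-ii} on non-crossing of paths) that a monochromatic directed cycle would enclose a region violating the outdegree balance guaranteed by the orientation, contradicting Theorem~\ref{thm:exists}.

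To close the cycle of bijections I would construct the inverse maps and check mutual inverseness. From a Schnyder decomposition one recovers a labelling by assigning to each corner the color determined by the ``gap'' between the outgoing arcs bracketing that corner, using the clockwise order of outgoing colors asserted in Definition~\ref{def:Schnyder}(iii); the incoming-arc clause of (iii) is precisely what makes this corner-labelling well-defined and satisfy the single-descent condition. From a $d/(d-2)$-orientation one recovers the edge-colors by distributing, around each internal vertex, the $d$ units of outdegree among the outgoing arcs in the unique clockwise-consistent way, which reconstructs the labelling; here the fact that outdegrees are exactly $d$ internally and $0$ externally makes the greedy clockwise distribution forced and hence canonical. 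I would verify that composing a map with its claimed inverse returns the identity by tracking the local data at a single vertex, since all three structures are determined by their local incidence pattern at internal vertices together with the external boundary condition (ii).

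The main obstacle I expect is establishing that the color-$i$ subgraph $F_i$ produced from a labelling or orientation is genuinely a \emph{forest} spanning the correct vertex set and rooted at $u_i$, as required by Definition~\ref{def:Schnyder}(ii) — in other words, ruling out monochromatic cycles and verifying the connectivity/rooting globally rather than merely locally. The local crossing rule only constrains the cyclic arrangement of arcs at each vertex; promoting this to the global acyclic, spanning, singly-rooted tree structure is the genuinely nontrivial step. I anticipate handling it via a counting argument on the region enclosed by a hypothetical cycle of color $i$, mirroring the Euler-relation computation used in the proof of Theorem~\ref{thm:exists}: such a cycle, together with the outdegree constraint on enclosed vertices, would force more outgoing arcs of color $i$ than the enclosed edges can supply, giving the contradiction. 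The remaining verifications — that outdegrees sum correctly and that each face sees all $d$ colors — are local and essentially bookkeeping once the descent/crossing dictionary is set up.
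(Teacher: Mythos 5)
Your overall architecture matches the paper's: pivot through clockwise labellings, send a labelling to the orientation by counting the colors swept between consecutive corners (the ``clockwise jump''), send it to the Schnyder decomposition by assigning those swept colors to the arcs, and isolate the acyclicity of each $F_i$ as the main obstacle. However, the argument you commit to for that main obstacle does not work. You propose to rule out a monochromatic directed cycle by ``mirroring the Euler-relation computation used in the proof of Theorem~\ref{thm:exists}: such a cycle \ldots would force more outgoing arcs of color $i$ than the enclosed edges can supply.'' In $F_i$ every internal vertex has outdegree exactly $1$, so a component containing a cycle is a functional graph with exactly as many color-$i$ arcs as vertices --- nothing is oversupplied, and no counting contradiction arises. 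The Euler computation in Theorem~\ref{thm:exists} produces the strict inequality $d\vv'=(d-2)\ee'+d-\ell>(d-2)\ee'$ only because $\ell<d$; a monochromatic cycle has length at least the girth $d$, so the same computation yields nothing. The paper instead takes a monochromatic directed cycle $C$ enclosing a \emph{minimal} number of faces and, if $C$ is clockwise of color $i$, launches the color-$(i+1)$ path from a vertex of $C$ whose outgoing $(i+1)$-edge differs from its outgoing $i$-edge; Remark~\ref{rk:immediate-csq-ii}(b) traps that path inside $C$, forcing a monochromatic cycle enclosing strictly fewer faces --- contradiction. You mention the non-crossing remark as an alternative in passing, but your primary route fails and would need to be replaced by this minimality argument.

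Two further gaps. First, you assert that Condition (i) of clockwise labellings ``makes the two arc-values sum to $d-2$''; locally the face condition only gives that the sum is congruent to $d-2$ modulo $d$, i.e.\ it is $d-2$ or $2d-2$. The paper rules out $2d-2$ by a global count (the total of all jumps is $dn=(d-2)m$ by Equation~\eqref{eq:incidence}, and each edge contributes at least $d-2$), which is Lemma~\ref{lem:bijLO}. Second, and more seriously, your reconstruction of a labelling from an arbitrary $d/(d-2)$-orientation is described as ``forced and hence canonical'' by a local greedy rule, but locally forced propagation only proves injectivity of $\Psi$; surjectivity requires showing the propagation never meets a conflict when run around an arbitrary cycle. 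This is a genuine monodromy statement, which the paper proves via the corner graph: the weight of every directed cycle of the corner graph decomposes over its enclosed faces, each contributing $\pm d$, so all holonomies vanish modulo $d$. Checking ``the local data at a single vertex,'' as you propose, cannot establish this global consistency.
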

Given the existence result in Theorem~\ref{thm:exists}, we obtain:
\begin{cor}
A $d$-angulation $G$ admits a Schnyder decomposition (respectively, a clockwise labelling) if and only if it has girth $d$.
\end{cor}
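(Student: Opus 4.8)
The plan is to treat the \emph{clockwise labelling} as the pivot object, since both a Schnyder decomposition and a $d/(d-2)$-orientation are ultimately just two different formats for recording, around each corner, the same local ``color/sector'' data; the content of the theorem is that these local encodings are mutually consistent and globally determined. First I would dispose of the degenerate case: if $G$ has girth strictly less than $d$, then by Theorem~\ref{thm:exists} there is no $d/(d-2)$-orientation, and (as the constructions below make clear) neither a Schnyder decomposition nor a clockwise labelling can exist either, so all three sets are empty and the bijection is trivial. Thus I may assume $G$ has girth $d$, and in particular that it has $d$ distinct external vertices $u_1,\ldots,u_d$.

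Next I would treat the two ``easy'' directions. To pass from a Schnyder decomposition $(F_1,\ldots,F_d)$ to a clockwise labelling, note that around an internal vertex $v$ part (iii) of Definition~\ref{def:Schnyder} places the outgoing arcs of colors $1,\ldots,d$ in clockwise cyclic order; I label each internal corner by the color of the nearest outgoing arc met going counterclockwise from it, and set the corners at $u_i$ to $i$. By construction the labels around $v$ increase by $1$ each time an outgoing arc is crossed and have a single descent (from $d$ back to $1$), giving condition (iii) of the labelling, while condition (ii) holds by fiat. The face condition (i)---that labels increase by $1$ clockwise around each $d$-gon---is the first genuine verification, and it is precisely here that the ``span'' clause of Definition~\ref{def:Schnyder}(iii) (incoming arcs of color $i$ confined strictly between $e_{i+1}$ and $e_{i-1}$) is used, by inspecting the two corners flanking each edge of the face and checking that they differ by $+1$. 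Simultaneously, a Schnyder decomposition yields a $d/(d-2)$-orientation by assigning to each arc the number of colors using it: each internal vertex lies in every forest and has out-degree $1$ in each (the single arc toward its parent), hence total out-degree $d$, whereas every external vertex is either a root or absent in each forest and so has out-degree $0$.

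Finally I would set up the reverse maps and isolate the obstacle. From a clockwise labelling one recovers a decomposition by an edge-local rule: the four corner labels surrounding an internal edge determine its two missing colors and the crossing direction of every other color, while the $d$ ``ascents-across-an-edge'' in the clockwise corner sequence around a vertex mark its $d$ outgoing arcs; one must then check that each color class $F_i$ is a spanning forest whose trees are rooted at the $u_j$ with $j\neq i,i+1$ and oriented toward them, and that the crossing rule holds, which is an acyclicity-and-counting argument fed by the face condition. The genuinely hard step, and the one I expect to be the main obstacle, is closing the loop through the \emph{orientation}: a bare $d/(d-2)$-orientation records for each arc only \emph{how many} colors leave along it, not \emph{which}, so reconstruction demands proving that there is a \emph{unique} way to assign the colors $1,\ldots,d$ cyclically in clockwise order to the outgoing units at every internal vertex. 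I would establish this by a global propagation anchored at the boundary: condition (ii) fixes the labels at the $d$ external vertices, and the face condition (i) then forces each corner label from its clockwise neighbor across a shared edge, propagating a labelling inward; connectivity of $G$ together with the girth-$d$ case of the Euler relation~\eqref{eq:incidence} guarantees that the induced arc-values match the prescribed out-degrees and that the propagation never contradicts itself (equivalently, that the recovered color classes close up into the correct rooted forests). Showing this consistency is the crux; once it is in hand, verifying that the three constructions are mutually inverse reduces to routine local comparisons.
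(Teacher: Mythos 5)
Your proposal is correct and follows essentially the same route as the paper: the corollary is obtained by combining the existence criterion for $d/(d-2)$-orientations (Theorem~\ref{thm:exists}) with the bijections between orientations, clockwise labellings, and Schnyder decompositions (Theorem~\ref{thm:equiv}, proved via the maps $\Psi$ and $\Phi$). You correctly identify the crux as the consistency of the color-propagation from a bare $d/(d-2)$-orientation, which the paper settles with the corner-graph argument showing every directed cycle has weight $\equiv 0 \pmod d$.
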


In the next two subsections we prove Theorem \ref{thm:equiv} and show bijections between Schnyder decompositions, clockwise labellings, and  $d/(d-2)$-orientations. Then, we present a lattice structure on the set of Schnyder decompositions of a given $d$-angulations.

\subsection{Bijection between clockwise labellings and $d/(d-2)$-orientations}\label{sec:ddmt}~\\
Let $L$ be a clockwise labelling of $G$. For each arc $a=(u,e)$
of $G$, the \emph{clockwise-jump across $a$} is the quantity in $\{0,\ldots,d-1\}$ equal to $\ell_2-\ell_1$ modulo $d$,
where $\ell_1$ and $\ell_2$ are respectively
the colors of the corners preceding and following $e$ in clockwise order around $u$.

\begin{lem}\label{lem:bijLO}
For each vertex $v$ of $L$, the sum of the clockwise-jumps over
the  arcs incident to $v$ is $d$ if $v$ is an internal vertex and is $0$
if $v$ is an external vertex.
For each internal edge $e=\{u,v\}$ of $L$, the sum of the clockwise-jumps on
the two arcs constituting $e$ is $d-2$.
\end{lem}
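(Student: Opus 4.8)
The plan is to establish the two assertions separately: the vertex statement follows from a telescoping computation together with conditions (ii) and (iii), while the edge statement comes from a local computation modulo $d$ refined by a global averaging argument based on~\eqref{eq:incidence}.

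For a vertex $v$ of degree $k$, list the corners around $v$ in clockwise order cyclically as $c_0,\dots,c_{k-1}$, and for each incident edge $e_i$ let $c_{i-1}$ and $c_i$ be the corners preceding and following $e_i$; write $\lambda_i\in\{1,\dots,d\}$ for the color of $c_i$. The clockwise-jump across the arc $(v,e_i)$ equals $\lambda_i-\lambda_{i-1}$ when $\lambda_i\ge\lambda_{i-1}$ and $\lambda_i-\lambda_{i-1}+d$ otherwise. Summing over the $k$ arcs, the differences $\lambda_i-\lambda_{i-1}$ telescope to $0$ around the cyclic sequence of corners, so the total clockwise-jump at $v$ equals $d$ times the number of indices $i$ with $\lambda_i<\lambda_{i-1}$, that is, $d$ times the number of \emph{descents} in the clockwise cyclic sequence of colors around $v$. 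For an internal vertex condition (iii) guarantees exactly one such descent, giving the value $d$; for the external vertex $u_i$ condition (ii) forces all incident corners to have color $i$, hence there is no descent and the value is $0$. This settles the first assertion.

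For the edge statement, fix an internal edge $e=\{u,v\}$. Both faces incident to $e$ are then internal, hence of degree $d$, so by condition (i) their corners carry the colors $1,\dots,d$ once each in clockwise order. Draw $e$ horizontally with $u$ on the left and $v$ on the right, and let $f$ and $f'$ be the faces lying below and above $e$. Around $u$ the corner following $e$ clockwise belongs to $f$ and the corner preceding it belongs to $f'$, whereas around $v$ these roles are exchanged; moreover the clockwise contour of $f$ crosses $e$ from $u$ to $v$ and that of $f'$ from $v$ to $u$. Hence, by (i), the color of the $f$-corner at $v$ exceeds that of the $f$-corner at $u$ by $1$ modulo $d$, and the color of the $f'$-corner at $u$ exceeds that of the $f'$-corner at $v$ by $1$ modulo $d$. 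Substituting these four colors into the definition of the two clockwise-jumps and adding, the corner colors cancel in pairs and the sum reduces to $-2$ modulo $d$. Since each clockwise-jump lies in $\{0,\dots,d-1\}$, the sum $s_e$ of the two lies in $\{0,\dots,2d-2\}$, so $s_e\in\{d-2,\,2d-2\}$.

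The only remaining point, and the one I expect to be the crux, is to rule out $s_e=2d-2$; I would do this globally rather than by a local case analysis, since a configuration in which both endpoints of $e$ place their unique descent at $e$ is not locally contradictory. Summing the first assertion over all vertices, the total clockwise-jump over all arcs equals $d\,n$, where $n=v(G)-d$ is the number of internal vertices. Grouping arcs by edges instead, the $d$ external edges contribute $0$, as both of their endpoints are external vertices across which every jump vanishes, so this same total equals $\sum_e s_e$ taken over the internal edges. By~\eqref{eq:incidence} there are exactly $\tfrac{d}{d-2}\,n$ internal edges, whence $\sum_e\big(s_e-(d-2)\big)=d\,n-(d-2)\cdot\tfrac{d}{d-2}\,n=0$. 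Every summand $s_e-(d-2)$ is nonnegative, so each vanishes, giving $s_e=d-2$ for every internal edge and completing the proof.
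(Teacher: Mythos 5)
Your proof is correct and follows essentially the same route as the paper: the vertex statement is read off from conditions (ii) and (iii), the local computation using condition (i) on the two incident faces shows that the two jumps on an internal edge sum to $d-2$ or $2d-2$, and the global count $d\,n=(d-2)\cdot\tfrac{d}{d-2}n$ from~\eqref{eq:incidence} forces the value $d-2$ on every edge. The only difference is presentational (you phrase the local step modulo $d$ and make the descent-counting explicit), so nothing further is needed.
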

\begin{proof}
The first assertion is a direct consequence of Properties~(ii),~(iii) of clockwise labellings. This assertion implies that the sum of clockwise-jumps over all arcs of $G$ is $d\cdot n$, where $n$ is the number of internal vertices of $G$.
By \eqref{eq:incidence} the number of internal edges is $m=\tfrac{d}{d-2}n$, so the sum of clockwise-jumps over all
arcs of $G$ equals $(d-2)m$.
Let us now consider the sum $S(e):=J(v,e)+ J(u,e)$ for an internal edge $e=\{u,v\}$. Let $i,j$ denote the colors of the corners preceding and following the edge $e$ in clockwise order around $u$. By Property~(i) of clockwise labellings, the colors preceding and following $e$ in clockwise order around $v$ are $j+1$ and $i-1$ respectively. Hence if $J(u,e)=d-1$, then $S(e)=2d-2$, and otherwise $S(e)=d-2$.
Given that the sum of the $J(v,e)$ over all the arcs on internal edges of $G$ is equal to $(d-2)m$,  we conclude  that
 $S(e)=d-2$ for all internal edges.
\end{proof}

Lemma~\ref{lem:bijLO} implies that in a clockwise labelling a clockwise-jump
can not exceed $d-2$, which ensures that our definition of clockwise labellings
coincides with the Schnyder labellings~\cite{Schnyder:wood1} in the case $d=3$.
It also ensures that the orientation $O$ with value $\Omega(a):=J(a)$ for each arc $a$ is a $d/(d-2)$-orientation. 
The mapping associating $O$ to $L$
is called $\Psi$. Denote by $\cL$ the set of clockwise labellings of $G$
and by $\cO$ the set of $d/(d-2)$-orientations of $G$.

\begin{prop}\label{prop:psi}
The mapping $\Psi$ is a bijection between $\cL$ and $\cO$.
\end{prop}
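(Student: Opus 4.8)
The plan is to construct an explicit inverse $\Phi\colon \cO \to \cL$ and to check that it is a two-sided inverse of $\Psi$; this yields injectivity and surjectivity at once. The guiding idea is that a clockwise labelling is simply a ``potential'' on the corners of $G$ whose increments are read off from the orientation. Concretely, I would introduce the \emph{corner graph} $R$ whose vertices are the corners of $G$, with an edge joining two corners that are consecutive either around a common vertex or around a common face, and prescribe a $\mathbb{Z}/d\mathbb{Z}$-valued increment on each edge of $R$: across two corners consecutive around a vertex $v$ and separated by the edge $e$, the increment is the value $\Omega(v,e)$ supplied by $O$; across two corners consecutive around a face, the increment is $+1$ (this is what will force Property~(i)). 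Defining $\Phi(O)$ then amounts to integrating these increments, the global additive constant being fixed by requiring the corners of $u_i$ to carry the label $i$, as demanded by Property~(ii).

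The heart of the argument is to show that this integration is well defined, i.e. that the prescribed increments sum to $0$ modulo $d$ around every cycle of $R$. Since $R$ is embedded in the sphere, its cycle space is generated by its facial cycles, and a direct Euler-characteristic count shows these facial cycles are in bijection with the vertices, faces and edges of $G$; hence it suffices to verify the vanishing on three types of cycles. For a cycle turning around a vertex, the total increment is the sum of the clockwise-jumps at that vertex, namely $d$ at an internal vertex and $0$ at an external one by Lemma~\ref{lem:bijLO}, hence $0$ modulo $d$. For a cycle turning around a face, the total is $+1$ repeated $d$ times, again $0$ modulo $d$. The remaining, and main, obstacle is the four-corner cycle surrounding a single edge $e=\{u,v\}$: here the four increments combine into $2+\Omega(u,e)+\Omega(v,e)$, which equals $2+(d-2)=d\equiv 0 \pmod d$ precisely because of the per-edge identity $\Omega(u,e)+\Omega(v,e)=d-2$ of Lemma~\ref{lem:bijLO}. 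I expect the delicate point to be the bookkeeping of the clockwise order of the four corners around $e$, so that the two face-increments and the two jump-increments receive the correct signs; this is exactly the local computation already performed in the proof of Lemma~\ref{lem:bijLO} relating the colors $i,j$ around $u$ to the colors $j+1,i-1$ around $v$.

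Once well-definedness is secured, I would verify that $L:=\Phi(O)$ is genuinely a clockwise labelling: Property~(ii) holds by the choice of base labels; Property~(i) holds since consecutive corners around each face differ by $+1$ and there are exactly $d$ of them; and Property~(iii) holds because around an internal vertex the jump-increments are nonnegative and sum to exactly $d$, so the cumulative label wraps around $\mathbb{Z}/d\mathbb{Z}$ exactly once, producing exactly one descent. It then remains to check that $\Phi$ inverts $\Psi$. The identity $\Psi\circ\Phi=\mathrm{id}$ holds by construction, since the clockwise-jumps of $\Phi(O)$ are the prescribed vertex-increments $\Omega$; and $\Phi\circ\Psi=\mathrm{id}$ holds because any clockwise labelling $L$ is itself a potential realizing both its own vertex-increments and its own face-increments (the latter being $+1$ by Property~(i)), so integrating them from the external corners recovers $L$ by uniqueness up to the fixed global constant. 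Alternatively, injectivity of $\Psi$ can be argued directly: two labellings with identical jumps differ by a per-vertex additive constant, and the relation giving the colors $j+1,i-1$ around $v$ propagates the vanishing of this constant from the external vertices throughout $G$ by connectivity.
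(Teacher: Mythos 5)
Your proposal is correct and follows essentially the same route as the paper: the paper also proves surjectivity by introducing the corner graph, assigning increments ($\pm 1$ across faces, $\pm\Omega(v,e)$ across vertices), and checking that the total increment vanishes modulo $d$ on each facial cycle, these facial cycles corresponding exactly to the vertices, edges and faces of $G$. Your packaging as an explicit two-sided inverse (rather than the paper's separate injectivity-by-propagation plus surjectivity arguments) and your more explicit verification of Property (iii) via the single wrap-around of the cumulative label are only cosmetic differences.
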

\begin{proof}
 First of all, it is clear that $\Psi$ is injective, since the values of $\Omega$ suffice to recover  the color of every corner by starting from the corners incident to external vertices (whose color is known by definition) and propagating the colors according to the rule
 (considering that the arc values on external edges are $0$):
\begin{itemize}
\item[(1)] if the color of a corner is $i$, the color of the next corner in clockwise order around its face is $i+1$,
\item[(2)] if the color of a corner $c$ is $i$, with $v$ the vertex incident to $c$, then the color of the next corner $c'$ in clockwise order around $v$ is $i+\Omega(v,e)$, where $e$ is the edge between $c$ and $c'$.
\end{itemize}
If $\Omega = \Psi(L)$ these rules uniquely determine $L$.
We will now prove that it is  possible to apply these rules starting from \emph{any}
$d/(d-2)$-orientation $\Omega$ without encountering any conflict (thereby proving the surjectivity of $\Psi$). Let $\Omega$ be a $d/(d-2)$-orientation.
Let $H$ be the plane graph, called \emph{corner graph}, whose vertices are the internal corners of $G$ and whose edges are the pairs $\{c,c'\}$ of corners which are consecutive around a vertex or a face of $G$,
see Figure~\ref{fig:corner-graph}. We define a function $\om$ on the arcs of the corner graph $H$ as follows. For an arc $a$ of $H$ going from a corner $c$ to a corner $c'$, we set $\om(a)=1$ (resp. $\om(a)=-1$)  if the corner $c'$ follows $c$ in clockwise (resp. counterclockwise) order around a face of $G$, and we set  $\om(a)=\Omega(v,e)$ (resp. $\om(a)=-\Omega(v,e)$) if $c'$ follows $c$ in clockwise (resp. counterclockwise) order around a vertex $v$ of $G$, where $e$ is the edge of $G$ between $c$ and $c'$.
For each directed path $P=c_1,\ldots,c_r$ in $H$, the \emph{arcs composing $P$}
are the arcs from $c_i$ to $c_{i+1}$ for $1\leq i<r$. We now define the function $\om$ on the directed paths of $H$ by setting $\om(P)=\sum_a\om(a)$, where the sum is over the arcs composing the path $P$.
Consider a directed path $P$ of $H$ starting at a corner $c$ and ending at a corner $c'$. By definition, if we  use the rules (1) (2) in order to attribute a colors to every corner of $H$, the colors $i$ and $i'$ of the corner $c$ and $c'$ will satisfy $i'=i+\om(P)$ modulo $d$. Hence,  to show that there is no conflict when propagating the colors according to the rules (1) (2) we have to show that any pair of directed paths $P,P'$ of $H$ with the same endpoints satisfy $\om(P)=\om(P')$ modulo $d$. Equivalently, we have to show that for each directed cycle $C$ of $H$, we have $\om(C)=0$ modulo $d$, and the verification can actually be restricted to simple directed cycles. Let $C$ be a simple directed cycle in $H$. Observe that the graph $H$ can be naturally superimposed
with $G$ (see Figure~\ref{fig:corner-graph}), revealing that each internal face of $H$
corresponds either to an internal vertex, edge, or face of $G$.
If $C$ is the cycle delimiting a face $f$ of $H$ in clockwise direction,
the value $\om(C)$ is $d$ (resp. $-d$, $d$)
if $f$ corresponds to an internal vertex (resp. edge, face) of $G$. More generally,
if $C$ is a simple clockwise (resp. counterclockwise) directed cycle of $H$, $\om(C)=\sum_{f}\om(C_f)$ (resp. $\om(C)=-\sum_{f}\om(C_f)$) where the sum is over the faces of $H$ enclosed in $C$, since the contributions of arcs strictly inside $C$ cancel out in this sum.  Thus, the value of $\om(C)$ is a multiple of $d$ for any directed cycle $C$. 
Thus, there is no possible conflict  when propagating the colors according to the rules (1) and (2): one can define the color of every internal corner of $G$ by setting the color of one of the corner incident to the external vertex $u_1$ to be 1, and asking for the rules (1) (2) to hold everywhere. Observe lastly these rules will attribute the color $i$ to every internal corner incident to the external vertex $u_i$ for all $i=1\ldots d$. Hence the coloring $L$ obtained is a clockwise labellings of $G$, and $\Psi(L)=\Omega$. 
This completes the proof that the mapping $\Psi$ is surjective, hence bijective.
\end{proof}


\begin{figure}[h]
\begin{minipage}{1.1\linewidth}
\hspace{-.8cm}
\begin{minipage}{.4\linewidth}
\centerline{\includegraphics[scale=.65]{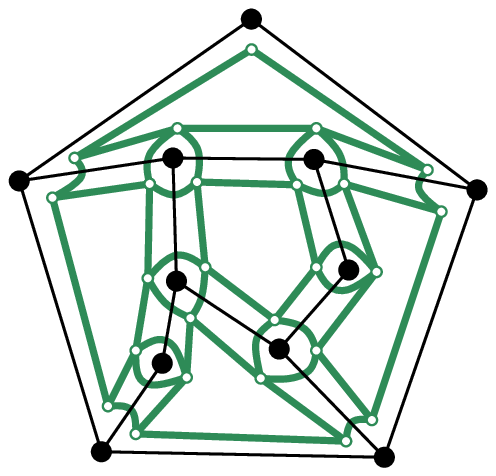}}
\caption{The corner graph.}\label{fig:corner-graph}
\end{minipage}\hspace{-.5cm}
\begin{minipage}{.65\linewidth}
\centerline{\includegraphics[scale=.6]{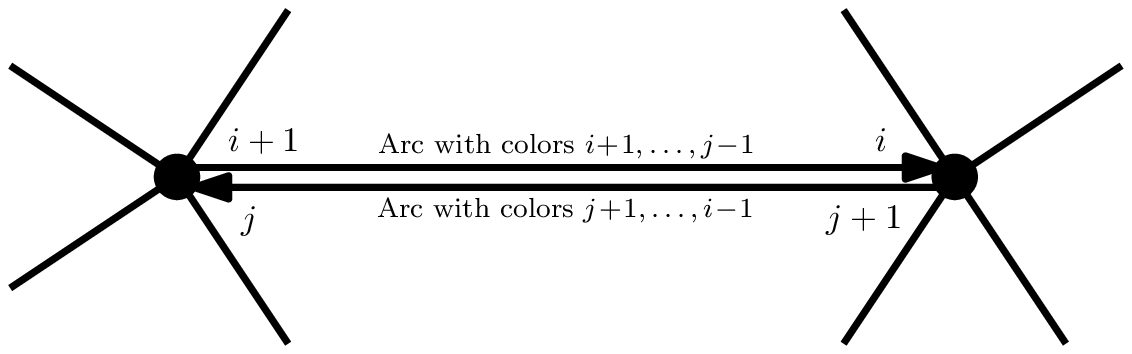}}\vspace{.4cm}
\caption{Mapping $\Phi$ from clockwise labellings to Schnyder decompositions.}\label{fig:rule-Phi}
\end{minipage}
\end{minipage}
\end{figure}


\subsection{Bijection between clockwise labellings and Schnyder decompositions}
We now define a mapping from clockwise labellings to Schnyder decompositions. Let $L$ be a clockwise labelling of $G$.
For each internal arc $a=(u,e)$ of $G$, we give to $a$ the colors $i,i+1,\ldots j-1$ (no color if $i=j$), where $i$ and $j$ are respectively the colors of the corners preceding and following $e$ in clockwise order around $u$, see Figure \ref{fig:rule-Phi}. For $i\in[d]$ we denote by $F_i$ the oriented graph of color $i$, and we call $\Phi$ the mapping that associates $(F_1,\ldots,F_d)$ to $L$.

\begin{lem}
For each $L\in\cL$, $(F_1,\ldots,F_d)=\Phi(L)$ is a Schnyder decomposition.
\end{lem}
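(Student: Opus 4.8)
The plan is to read all three conditions of Definition~\ref{def:Schnyder} off the corner-label arithmetic of $L$, treating conditions~(i) and~(iii) as purely local verifications (edge-by-edge and vertex-by-vertex) and reserving the global tree structure of condition~(ii) for the end. First I would record the arithmetic of the rule $\Phi$. If $i,j$ denote the colors of the corners preceding and following an internal edge $e$ in clockwise order around one endpoint $u$, then Property~(i) of clockwise labellings forces the corresponding colors around the other endpoint $v$ to be $j+1$ and $i-1$. Hence the arc $(u,e)$ receives the colors $\{i,\dots,j-1\}$ and the arc $(v,e)$ receives $\{j+1,\dots,i-2\}$, so that $e$ bears every color except the two \emph{missing colors} $i-1$ and $j$; this is exactly condition~(i). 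It is worth noting that the number of colors borne by an arc equals its clockwise-jump, that is, its value under the $d/(d-2)$-orientation $\Psi(L)$ of Proposition~\ref{prop:psi}, which keeps all the counting consistent via Lemma~\ref{lem:bijLO}.

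Next I would establish the local crossing rule, condition~(iii). Around an internal vertex $v$, Property~(iii) of clockwise labellings says that the corner colors read clockwise increase with exactly one descent, hence wind exactly once around $[d]$; consequently the outgoing-color intervals carried by the successive edges partition $[d]$. This gives at once that $v$ has exactly one outgoing arc of each color and that these arcs appear in clockwise order. The placement of the incoming arcs is then a short interval computation: the incoming colors on an edge form the complementary interval, and comparing, for a given incoming color $i$, the positions of the unique outgoing arcs of colors $i-1,i,i+1$ shows that the incoming arc of color $i$ lies strictly between $e_{i+1}$ and $e_{i-1}$ in clockwise order, as required. This step also legitimizes the use of the non-crossing consequence of Remark~\ref{rk:immediate-csq-ii} for $\Phi(L)$, which I will need below.

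It remains to prove condition~(ii). By condition~(iii) each internal vertex has out-degree exactly $1$ in each $F_i$. Around an external vertex $u_j$ all corners have color $j$ by Property~(ii), so every clockwise-jump vanishes and $u_j$ has out-degree $0$; moreover each internal edge at $u_j$ has missing colors $j-1,j$, hence bears every color $i\neq j-1,j$ as an incoming color at $u_j$. Thus $F_i$ avoids $u_i$ and $u_{i+1}$, while each $u_j$ with $j\neq i,i+1$ is a genuine sink incident to a color-$i$ edge. Since each $F_i$ has exactly $n$ edges (one out-edge per internal vertex, with $n$ the number of internal vertices) on $n+(d-2)$ vertices, everything in condition~(ii) follows once $F_i$ is shown to be acyclic: a digraph with out-degree at most $1$ at every vertex and no directed cycle is a disjoint union of trees oriented toward their sinks, and the edge/sink count then forces exactly $d-2$ trees rooted at the prescribed external vertices $u_j$.

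The main obstacle is this acyclicity, which is the only genuinely global and topological point. I would argue by contradiction, choosing among all monochromatic directed cycles (over all colors) one, say of color $i$, enclosing the fewest internal faces. All its vertices are internal, so $C$ bounds a region $R$ not containing the external face. Using the local rule~(iii) just proved, at each vertex of $C$ the outgoing arc of color $i+1$ (or, according to the orientation of $C$, of color $i-1$) lies on the $R$-side of $C$; following these arcs into $R$ and invoking the non-crossing consequence of Remark~\ref{rk:immediate-csq-ii}, namely that a directed path of color $i+1$ cannot cross a directed path of color $i$ from right to left, the resulting deterministic color-$(i+1)$ walk is trapped inside the closed region $R$, and hence, being in a finite graph, closes up into a monochromatic cycle enclosing strictly fewer faces than $C$ --- contradicting minimality. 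The two delicate points I expect to spend care on are verifying that the relevant adjacent-color arcs point consistently into $R$ at \emph{every} vertex of $C$, and that the forbidden crossing direction of Remark~\ref{rk:immediate-csq-ii} is precisely the one that an escaping path would be forced to take; this is exactly where planarity and the chosen orientation of $C$ enter.
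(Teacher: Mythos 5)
Your proposal is correct and follows essentially the same route as the paper: condition (i) from the jump arithmetic of Lemma~\ref{lem:bijLO}, condition (iii) by local analysis of Property (iii) of clockwise labellings, acyclicity via a monochromatic cycle enclosing a minimal number of faces together with Remark~\ref{rk:immediate-csq-ii}(b), and the observation that edges at $u_j$ have missing colors $j-1,j$. The one delicate point you flag is real and is resolved in the paper by choosing the color $i$ so that not all arcs of $C$ also carry color $i+1$ (possible since each arc carries fewer than $d$ colors), which guarantees a starting vertex whose color-$(i+1)$ out-edge lies strictly inside $C$ and hence a strictly smaller trapped cycle.
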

\begin{proof}
According to Lemma~\ref{lem:bijLO}, each internal edge receives a total of $d-2$ colors (see Figure~\ref{fig:rule-Phi}). Property (iii) of clockwise labellings ensures that every internal vertex $v$ has exactly
one outgoing edge in each of the oriented subgraphs $F_1,F_2,\ldots,F_d$, and that these edges appear in clockwise order around $v$. Moreover, if an edge $e$ has  color $k\in[d]$ on the arc
directed toward an internal vertex $v$, then in clockwise order around $v$ the edge $e$ is preceded and followed by corners with colors $i+1,j$ satisfying $k\in\{j+1,\ldots,i-1\}$. Hence, $i+1\notin\{k,k+1\}$ and $e$ is not between the outgoing edges of color $k-1$ and $k+1$ in clockwise order around $v$.
This proves Property~(iii) of Schnyder decompositions.

It remains to show that the oriented subgraphs $F_1,\ldots,F_d$ are forests oriented toward the external vertices. Suppose that for some $i\in[d]$, $F_i$ is not a forest oriented toward the external vertices. Since every internal vertex has exactly one outgoing edge in $F_i$, this implies the existence of a directed cycle in $F_i$. Consider a directed cycle $C$ in one of the subgraphs $F_1,F_2,\ldots,F_d$ enclosing a minimal number of faces. Clearly, the cycle $C$ is simple and has only internal vertices. Suppose first that the cycle $C$ is clockwise. There exists a color $i\in[d]$ such that all the arcs of $C$ have the color $i$, but not all the arcs have the color  $i+1$. There exists a vertex $v$ on $C$ such that the edge $e_{i+1}$ of color $i+1$ going out of $v$ is not equal to the the edge $e_i\in C$ of color $i$ going out of $v$. In this case, the edge $e_{i+1}$ is strictly inside $C$. Indeed, we have established above that the incoming arcs of color $i$ at $v$ (one of which belongs to $C$) are strictly between the outgoing edges of color $i-1$ and $i+1$ in clockwise direction around $v$. Let $v_1$ be the other end of the edge $e_{i+1}$. Since $v_1$ is an internal vertex, there is an edge of color $i+1$ going out of $v_1$ and leading to a vertex $v_2$. Continuing in this way we get a directed path $P$ of color $i+1$ starting at $v$ and either ending at an external vertex or creating a directed cycle of color $i+1$. Moreover, Property~(b) in Remark~\ref{rk:immediate-csq-ii}
 implies that $P$ remains inside the cycle $C$ of color $i$. This means that the path $P$ creates a directed cycle of color $i+1$ which is strictly contained inside $C$. This cycle encloses less faces than $C$, contradicting our minimality assumption on $C$. Similarly, if the cycle $C$ is directed counterclockwise there is a cycle (of color $i-1$) which encloses less faces than $C$. We thereby reach a contradiction. This proves that there is no monochromatic directed cycle. Therefore  $F_1,\ldots,F_d$ are forests directed toward the external vertices.

In order to complete our proof that $(F_1,\ldots,F_d)$ is a Schnyder decomposition, it only remains to show that for all $i\in [d]$ the forest $F_i$ is not incident to the external vertices  $u_i,u_{i+1}$. For this, recall that the corners incident to the external vertex $u_i$ have color $i$, so that for any internal edge  $e=\{u_i,v\}$ the two corners incident to $v$ have colors $i-1$ and $i+1$. Hence, $e$ belongs to $F_{i+1},F_{i+2},\ldots,F_{i-2}$ but not to $F_{i-1}$ or $F_{i}$. Thus, $(F_1,\ldots,F_d)$ is a Schnyder decomposition.
\end{proof}

Let $\cS$ be the set of Schnyder decompositions of $G$.
\begin{prop}
The mapping $\Phi$ is a bijection between $\cL$ and $\cS$.
\end{prop}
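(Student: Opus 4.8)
The plan is to show that $\Phi$ is a bijection between $\cL$ and $\cS$ by constructing an explicit inverse map $\Phi^{-1}:\cS\to\cL$ and verifying that the two compositions are identities. The previous lemma already establishes that $\Phi$ lands in $\cS$, so I need to produce, from an arbitrary Schnyder decomposition $(F_1,\ldots,F_d)$, a clockwise labelling $L'$ of the corners, show $L'\in\cL$, and check that $\Phi(L')=(F_1,\ldots,F_d)$ while $\Phi^{-1}(\Phi(L))=L$ for every $L\in\cL$.

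First I would define the candidate labelling directly from the arc-colorings. Around an internal vertex $v$, Property~(iii) of Definition~\ref{def:Schnyder} says the outgoing arcs of colors $1,\ldots,d$ appear in clockwise order; for a corner $c=(v,e,e')$ I would assign the label $\ell(c)=i$ where $i$ is the color such that the outgoing arc of color $i$ lies on $e$ (the edge preceding $c$) and the outgoing arc of color $i+1$ lies on or after $e'$ — equivalently, $\ell(c)$ is determined by reading off which of the cyclic sectors cut out by the outgoing arcs the corner $c$ falls into. For corners incident to the external vertex $u_i$ I would simply set the label to $i$, matching Property~(ii) of clockwise labellings. The content of the verification is that this assignment is consistent, i.e. that the label-increment rules (1) and (2) from the proof of Proposition~\ref{prop:psi} are exactly reproduced by the arc-coloring: going from one corner to the next in clockwise order around $v$ across an edge $e$, the label jumps by the number of colors carried by the outgoing arc $(v,e)$, and going around a face the label increases by exactly $1$ (this last fact uses Property~(i)/(a) of Remark~\ref{rk:immediate-csq-ii}, that the colors on the two sides of an edge partition cyclically). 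I would then check Property~(iii) of clockwise labellings — exactly one clockwise descent of labels per internal vertex — which is immediate once one knows there is exactly one outgoing arc of each color and they occur in clockwise order, so the labels increase cyclically with a single wrap-around.

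Rather than redo all this consistency checking by hand, the cleaner route is to route everything through the already-established bijection $\Psi$ of Proposition~\ref{prop:psi}. The idea is to define a map $\Theta:\cS\to\cO$ sending $(F_1,\ldots,F_d)$ to the orientation $\Omega$ whose value on the arc $a=(v,e)$ is the number of colors carried by $a$ (i.e. the number of forests using $a$ in that direction); Property~(i) of Schnyder decompositions guarantees $\Omega(u,e)+\Omega(v,e)=d-2$ for internal edges, and Property~(iii) guarantees outdegree exactly $d$ at each internal vertex, so $\Omega\in\cO$. One then verifies the commutation $\Phi = \Psi^{-1}\circ\Theta$ — that is, for $L\in\cL$ the clockwise-jump $J(v,e)$ defining $\Psi(L)$ equals the number of colors the rule for $\Phi$ places on the arc $(v,e)$, which is transparent from the definitions of both maps (both count the cyclic difference $j-i$ of the labels flanking $e$). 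Since $\Psi$ is a bijection by Proposition~\ref{prop:psi}, showing $\Theta$ is a bijection suffices, and then $\Phi=\Psi^{-1}\circ\Theta$ is automatically a bijection as a composite of bijections.

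The main obstacle is establishing that $\Theta$ is well-defined and bijective, and specifically the surjectivity/injectivity at the level of recovering the full forest structure (not just the underlying arc-values) from $\Omega$. Injectivity of $\Theta$ requires that the color assignment of each arc is recoverable from the orientation together with the crossing rule; this is where Property~(iii) does the work, since it pins down exactly which $d-2$ of the $d$ colors each arc carries once the outdegrees and their cyclic positions are fixed. Surjectivity requires showing that starting from an arbitrary $\Omega\in\cO$ the colors propagated via $\Psi^{-1}$ reassemble into genuine forests satisfying conditions (i)–(iii) of Definition~\ref{def:Schnyder} — but this is precisely the content of the preceding lemma applied to $L=\Psi^{-1}(\Omega)$, which shows $\Phi(\Psi^{-1}(\Omega))\in\cS$ with $\Theta(\Phi(\Psi^{-1}(\Omega)))=\Omega$. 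Thus the delicate acyclicity argument (that the monochromatic subgraphs are forests) is not re-proved here; it is inherited from the earlier lemma, and the remaining work is the bookkeeping identity $\Phi=\Psi^{-1}\circ\Theta$ together with the straightforward verification that $\Theta$ inverts the color-counting correctly.
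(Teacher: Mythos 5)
Your overall strategy is the same as the paper's: factor $\Phi$ through the set $\cO$ of $d/(d-2)$-orientations via the color-forgetting map (your $\Theta$, the paper's $\Gamma$), use the identity $\Psi=\Theta\circ\Phi$ together with the bijectivity of $\Psi$ from Proposition~\ref{prop:psi}, and obtain surjectivity of $\Theta$ for free from the preceding lemma. (A small slip: the identity you write, $\Phi=\Psi^{-1}\circ\Theta$, does not typecheck, since $\Psi^{-1}\circ\Theta$ maps $\cS$ to $\cL$; what you actually use, and what is true, is $\Psi=\Theta\circ\Phi$, hence $\Phi=\Theta^{-1}\circ\Psi$.)

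The genuine gap is the injectivity of $\Theta$, i.e.\ the claim that a Schnyder decomposition can be recovered from its underlying orientation. You justify it by saying that Property~(iii) of Definition~\ref{def:Schnyder} ``pins down exactly which $d-2$ of the $d$ colors each arc carries once the outdegrees and their cyclic positions are fixed.'' It does not: Property~(iii) is a local condition that only forces the outgoing colors $1,\ldots,d$ to occur in clockwise order around each internal vertex, so the orientation determines the colors of the arcs leaving $v$ only up to a cyclic rotation of $[d]$ --- a priori there are $d$ admissible local colorings at each internal vertex. Which rotation is the right one is fixed only by the boundary data (Property~(ii) forces the colors of the edges incident to the external vertices $u_i$) and must then be transported inward through the graph. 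That transport is exactly the ``agreement'' argument that constitutes the bulk of the paper's proof of this proposition: one shows that if two Schnyder decompositions with the same underlying orientation agree on one edge incident to an internal vertex $v$, then Property~(iii) of Schnyder decompositions together with Property~(iii) of clockwise labellings forces them to agree on every edge incident to $v$, and one then sweeps this agreement from the external vertices across the whole connected graph. Until you carry out that propagation (or an equivalent global argument), the injectivity of $\Theta$ --- and with it the surjectivity of $\Phi$ --- is asserted rather than proved.
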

\begin{proof}
It is clear from the definition of the mappings $\Psi$ and $\Phi$ that the orientation $O=\Psi(L)$ associated with a clockwise labelling $L$ is obtained from the Schnyder decomposition $(F_1,\ldots,F_d)=\Phi(L)$ by forgetting the colors. Denoting by $\Gamma$ the ``color-deletion'' mapping, we thus have $\Psi=\Gamma\circ\Phi$. By Proposition \ref{prop:psi}, the mapping $\Psi$ is injective, thus $\Phi$ is also injective.

To prove that $\Phi$ is surjective, we consider a Schnyder decomposition $S=(F_1,\ldots,F_d)$. Since $\Gamma(S)$  is a $d/(d-2)$-orientation, $L=\Psi^{-1}\circ \Gamma(S)$ is a clockwise labelling. We now show that  $\Phi(L)=S$ (thereby showing the surjectivity of $\Phi$). Let $S'=\Phi(L)$.  First observe that for any arc $a$ of $G$ the number of colors of $a$ in $S$ and in $S'$ is the same: it is equal to the clockwise jump across $a$ in $L$. We say that $S$ and $S'$ \emph{agree} on an arc $a=(u,e)$ if the colors of $a$ are the same in $S$ and in $S'$.  We say that $S$ and $S'$ \emph{agree} on an edge $e$ if they agree on both arcs of $e$.  For all $i\in[d]$, Property (ii) of Schnyder decompositions implies that the internal edges incident to the external vertex $u_i$ have missing colors $i$ and $i-1$ in both $S$ and $S'$ (and all their colors are oriented toward $u_i$), hence  $S$ and $S'$ agree on these edges.  We now suppose that $v$ is an internal vertex incident to an edge $e$ on which $S$ and $S'$ agree and show that in this case $S$ and $S'$ agree on every edge incident to $v$ (this shows that $S$ and $S'$ agree on every edge). Suppose first that the edge $e$ has some colors going out of $v$ (which by hypothesis are the same in $S$ and in $S'$). In this case, $S$ and $S'$ agree on each arc going out of $v$ (since the colors going out of $v$ are $1,2,\ldots,d$ in clockwise order).
Now, for an edge $e'=\{v,v'\}$, either the arc $a=(v,e')$ has some colors $i+1,\ldots, j-1$ in which case the arc $a'=(v',e')$ has the colors  $j+1,\ldots, i-1$ (both in $S$ and in $S'$), or the arc  $a=(v,e')$ has no color in which case  the $d-2$ colors of $a'=(v',e')$ are imposed (both in $S$ and in $S'$) by Property (iii) of clockwise labellings (which implies that the missing colors are $i$ and $i+1$ for each edge $e'$ strictly between the outgoing arcs of color $i$ and $i+1$ around $v$). We now suppose that the edge $e$ has no color going out of $v$. Let $i,i+1$ be the missing colors of $e$. Again by Property~(iii) of Schnyder decompositions,
the edge $e$ is between the outgoing arcs of color $i$ and $i+1$ around $v$ (both for $S$ and $S'$), thus
$S$ and $S'$ agree on these arcs. Hence $S$ and $S'$  also agree on all the edges incident to $v$ by the same reasoning as above. This shows that $S=S'$ hence that $\Psi$ is surjective.
\end{proof}

\subsection{Lattice property}~\\
In this subsection we explain how to endow the set of $d/(d-2)$-orientations  of a $d$-angulation (equivalently its set of Schnyder decompositions) with the structure of a distributive lattice, a property already known for $d=3$~\cite{Brehm00,Mendez:these}.

Let $G$ be a plane graph. Given a simple cycle $C$, the \emph{clockwise} (resp. \emph{counterclockwise})
 arcs of $C$ are the arcs  in clockwise (resp. counterclockwise) direction around $C$. Given a $k$-fractional orientation $O$, the cycle $C$ is a \emph{counterclockwise circuit} if the value of every counterclockwise arc is positive. We then denote by $O_C$ the orientation obtained from $O$ by increasing  (by $1$) the value of the clockwise arcs of $C$  and decreasing (by $1$) the value of the counterclockwise arcs of $C$. The transition from $O$ to $O_C$ is called the \emph{pushing} of the cycle $C$. We then prove the following lemma as an easy consequence of~\cite{FeKn10}.

\begin{lem}\label{lem:structure}
Let $G=(V,E)$ be a plane graph and let $\alpha$ be a function from $V$ to $\mathbb{N}$.
If the set $A$ of $(\alpha,k)$-orientations of $G$ is not empty, then
the transitive closure of cycle-pushings gives a partial order on $A$ which is a distributive lattice.
\end{lem}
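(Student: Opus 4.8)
The lemma asserts that the set $A$ of $(\alpha,k)$-orientations, equipped with the partial order generated by counterclockwise-circuit pushings, is a distributive lattice. I would prove this by reducing it to the known case of ordinary ($1$-fractional) orientations. Recall from the proof of Corollary~\ref{cor:condition-existence} that a $(\alpha,k)$-orientation of $G=(V,E)$ is nothing but an ordinary orientation of the multigraph $kG$ (each edge replaced by $k$ indistinguishable parallel edges) with outdegree $k\alpha(v)$ at each vertex. So the first step is to set $H=kG$ and $\beta=k\alpha$, and observe that $A$ is in bijection with the set $A'$ of $(\beta,1)$-orientations of $H$.

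The crux is to match the order relations under this bijection. For ordinary orientations with fixed outdegrees, it is classical (this is the content cited as~\cite{FeKn10}) that the transitive closure of counterclockwise-circuit pushings yields a distributive lattice on $A'$. I would therefore verify that pushing a counterclockwise circuit $C$ in the $k$-fractional orientation $O$ corresponds, under the bijection, to a sequence of single-edge pushings of counterclockwise circuits in $H$. Concretely, increasing by $1$ the value of each clockwise arc of $C$ and decreasing by $1$ the value of each counterclockwise arc amounts, in $kG$, to reversing exactly one of the $k$ parallel copies along each edge of $C$ so that it now points clockwise; since $C$ was a counterclockwise circuit (every counterclockwise arc had positive value, i.e.\ at least one parallel copy pointed counterclockwise), such a copy is available on each edge, so the operation is well-defined and produces a valid element of $A'$. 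This shows the pushing operations, hence the generated orders, agree across the bijection. One must check that the notion of ``counterclockwise circuit'' is respected: a simple cycle of $G$ lifts to a simple cycle of $H$ (choosing one parallel copy per edge), and positivity of counterclockwise arc-values in $O$ translates to the existence of a counterclockwise-oriented copy in the lift.

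Given this correspondence, the distributive-lattice structure on $A$ is transported from $A'$ directly: the bijection is an isomorphism of the two ordered sets, and distributive lattices are preserved under order isomorphism. The meet and join on $A$ are inherited from those on $A'$, which in the Felsner--Knauer framework are given pointwise via a potential/height function on the faces.

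**The main obstacle.**
The delicate point is ensuring that the order generated by \emph{$k$-fractional} circuit pushings on $A$ coincides with the order generated by \emph{single-edge} pushings on $A'=H$, rather than merely being refined or coarsened by it. A $k$-fractional pushing of $C$ moves all $k$ coordinates along $C$ simultaneously, whereas in $H$ one could in principle push the $k$ parallel copies in separate steps or along different lifted cycles. I expect the resolution to hinge on the fact that the relevant order is the transitive closure in both cases: a composite $k$-fold pushing in $G$ is realized as a composition of $k$ single pushings in $H$ along the same underlying cycle, so it lies in the transitive closure on $A'$; conversely one must argue that the extra single-edge moves available in $H$ do not create new comparabilities between elements that are lifts of $k$-fractional orientations of $G$. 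The cleanest way to secure this is through the face-potential (height function) description of the lattice from~\cite{FeKn10}: two orientations are comparable iff one potential dominates the other pointwise, and this criterion is manifestly invariant under the bijection, so I would ultimately phrase the whole argument in terms of potentials rather than tracking individual pushings.
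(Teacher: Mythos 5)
Your reduction to orientations of the multigraph $kG$ works for the \emph{existence} statement (this is exactly how the paper proves Corollary~\ref{cor:condition-existence}), but for the lattice statement it has a genuine gap that your own ``main obstacle'' paragraph circles around without closing. The set of $(k\alpha,1)$-orientations of $kG$ is \emph{not} in bijection with the set $A$ of $(\alpha,k)$-orientations of $G$: once you invoke the distributive-lattice theorem for ordinary $\alpha$-orientations you must treat the $k$ parallel copies as distinguishable edges of a fixed plane multigraph, and then the natural map to $A$ is a many-to-one quotient (the fibre over a fractional orientation with value $\Omega(u,e)$ on the arc $(u,e)$ has size $\prod_e\binom{k}{\Omega(u,e)}$). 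Worse, two antiparallel copies of the same edge bound a digon face of $kG$, and pushing that digon is a genuine covering relation in the lattice on $kG$-orientations while acting trivially on $A$. So distinct elements of the same fibre are comparable, the quotient relation is not obviously antisymmetric or a partial order, and a quotient of a distributive lattice is a distributive lattice only when the equivalence is a lattice congruence --- none of which you verify. Your closing appeal to ``the face-potential description'' is the right instinct for repairing this (the lens faces between parallel copies carry exactly the redundant data one wants to forget), but as written it is a promissory note, not an argument; the claim ``the bijection is an isomorphism of the two ordered sets'' is simply not available.

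The paper avoids the multigraph entirely: it applies the Felsner--Knauer result in its capacitated form, for $(\Delta,\ell,u)$-bonds. One subdivides each edge $e=\{u,v\}$ of $G$ by a vertex $v_e$, orients both half-edges toward $v_e$, sets $\Delta=\alpha$ on original vertices and $\Delta=-k$ on edge-vertices, and takes capacities $\ell\equiv 0$, $u\equiv k$. The $(\alpha,k)$-orientations of $G$ are then literally in bijection with the $(\Delta,0,k)$-bonds of the subdivided graph, and fractional cycle-pushings correspond exactly to the admissible cycle-incrementations of that theorem, so the distributive lattice structure transfers with no quotient to analyze. If you want to keep your route, you would need to either exhibit a canonical section $A\to A'$ whose image is a sublattice generating the same order, or prove that the fibres form a lattice congruence and that the quotient order is still generated by the images of the non-digon cycle pushes; either way this is substantially more work than the capacitated formulation, which is presumably why the paper cites \cite{FeKn10} rather than \cite{Felsner:lattice} here.
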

\begin{proof}
First we recall the result from~\cite{FeKn10} (formulated there in the dual setting).
Let $H$ be a loopless directed plane graph with set of vertices $V_H$ and set of (directed) edges $E_H$.
Given a function $\Delta:V_H\to\mathbb{Z}$, a \emph{$\Delta$-bond} is a function $\Omega:E_H\to\mathbb{Z}$
such that for any vertex $v$, $\sum_{e\in\mathrm{out}(v)}\Omega(e)-\sum_{e\in\mathrm{in}(v)}\Omega(e)=\Delta(v)$, where $\mathrm{out}(v)$ and $\mathrm{in}(v)$ are the sets of edges with origin $v$ and end $v$ respectively.
Given two functions $\ell:E_H\to\mathbb{Z}$ and $u:E_H\to\mathbb{Z}$, a $(\Delta,\ell,u)$\emph{-bond} is a $\Delta$-bond $\Omega$ such that $\ell(e)\leq\Omega(e)\leq u(e)$ for each edge $e$. In this context, a simple cycle $C$ is called \emph{admissible} if  each clockwise edge $e$ of $C$ satisfies $\Omega(e)<u(e)$ and
each counterclockwise edge $e$ of $C$ satisfies $\Omega(e)>\ell(e)$.
 \emph{Incrementing} the admissible cycle
$C$ means increasing by $1$ the value of the clockwise edges and decreasing by $1$ the value 
of the counterclockwise edges of $C$. It is shown in~\cite{FeKn10}
that, if the set  of $(\Delta,\ell,u)$-bonds is not empty, the transitive closure of cycle-incrementations gives a distributive lattice.

We now use this result in our context. Let $H$ be the directed plane graph
obtained from the plane graph $G$ by inserting in the middle of each edge $e=\{u,v\}$ of $G$ 
a vertex $v_e$, called an \emph{edge-vertex}, and orienting the two resulting edges
$\{u,v_e\}$ and $\{v,v_e\}$ toward $v_e$. Observe that the arcs of $G$ can be identified with the edges of $H$. Let $\Delta$ be the function defined on the vertices of $H$ by setting $\Delta(v)=\alpha(v)$ for each vertex $v$
of $G$ and $\Delta(v)=-k$ for each edge-vertex.
Clearly, the $(\alpha,k)$-orientations of $G$ correspond bijectively
to the $(\Delta,\ell,u)$-bonds of $H$, where  $\ell(e)=0$ and $u(e)=k$ for each edge $e$ of $H$. Moreover, the cycle-incrementations in $\Delta$-bonds correspond to the cycle-pushings in $(\alpha,k)$-orientations. Hence, the above mentioned result guarantees that the transitive closure of cycle-pushings defines a distributive lattice on the set of $(\alpha,k)$-orientations of $G$. 
\end{proof}

\begin{prop}\label{cor:lattice} 
Let $G$ be a $d$-angulation of girth $d$. 
The set of $d/(d-2)$-orientations 
of $G$ (equivalently, its set of Schnyder decompositions, or clockwise-labellings) can be given the structure of a distributive lattice in which the order relation is the transitive closure
of the cycle-pushings on $d/(d-2)$-orientations.
Moreover, the covering relation corresponds to the pushing of any counterclockwise circuit of length~$d$.
\end{prop}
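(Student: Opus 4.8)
The plan is to deduce the lattice structure directly from Lemma~\ref{lem:structure}, and to obtain the characterization of the covering relation from a length count in the spirit of the proof of Theorem~\ref{thm:exists}.

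\emph{Lattice structure.} First I would observe that a $d/(d-2)$-orientation of $G$ is exactly an $(\alpha,d-2)$-orientation of the plane graph $G^{\circ}$ obtained from $G$ by deleting its $d$ external edges, where $\alpha(v)=d$ for each internal vertex and $\alpha(v)=0$ for each external vertex; Equation~\eqref{eq:incidence} guarantees Condition~(a) of Corollary~\ref{cor:condition-existence}, exactly as in the proof of Theorem~\ref{thm:exists}. Since external edges carry no orientation they belong to no pushable cycle, so the cycle-pushings of $d/(d-2)$-orientations coincide with the cycle-pushings of the $(\alpha,d-2)$-orientations of $G^{\circ}$, and Lemma~\ref{lem:structure} endows the set of $d/(d-2)$-orientations with the announced distributive-lattice structure, with order given by the transitive closure of cycle-pushings. (If $G^{\circ}$ is disconnected one applies the lemma to each component and takes the product lattice.) The transport to Schnyder decompositions and clockwise labellings is then immediate from Theorem~\ref{thm:equiv}, since those bijections leave the underlying orientation unchanged.

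\emph{Covering relation.} For the covering relation I would use the height-function description attached to the lattice of Lemma~\ref{lem:structure}: fixing the minimal orientation $O_{\min}$, each orientation $O$ is encoded by a height $h_O$ on the internal faces, pushing a counterclockwise circuit $C$ raises $h_O$ by $1$ on every face enclosed by $C$, and $O\le O'$ iff $h_O\le h_{O'}$ pointwise. In this language $O\lessdot O'$ holds iff $h_{O'}-h_O$ is a minimal realizable increment, i.e. iff $O'=O_C$ for a counterclockwise circuit $C$ that is \emph{innermost}: no counterclockwise circuit of $O$ encloses a proper nonempty subset of the faces enclosed by $C$. Thus the whole statement reduces to showing that the innermost counterclockwise circuits are exactly those of length $d$.

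\emph{The length count.} This is where the girth hypothesis enters, and it is the step I expect to be the main obstacle. Let $C$ be a counterclockwise circuit of length $\ell$, and let $\vv',\ee'$ be the numbers of vertices and edges lying strictly inside $C$. The Euler relation together with the fact that every inner face has degree $d$ gives $d\,\vv'=(d-2)\,\ee'+d-\ell$, exactly as in the necessity part of Theorem~\ref{thm:exists}. Summing the outdegree-$d$ constraints over the vertices strictly inside $C$ and comparing with the value $(d-2)\ee'$ carried by the edges lying strictly inside $C$, I would show that the total amount of flow sent by the arcs from the vertices of $C$ into the interior equals $\ell-d$. Two consequences drive the proof. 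If $\ell=d$ this inflow is $0$, hence every edge joining the interior to $C$ is saturated toward $C$ and $C$ feeds nothing inside; I would use this saturation to rule out any counterclockwise circuit strictly inside $C$, so that a length-$d$ circuit is innermost and its pushing is a covering step. If $\ell>d$ the strictly positive inflow yields an arc from a vertex of $C$ into the interior, which (using outdegree $\ge1$ everywhere inside) continues into a directed detour that closes up with a forward arc of $C$ into a counterclockwise circuit enclosing strictly fewer faces, so that a longer counterclockwise circuit is never innermost. Converting the inflow identity into these two combinatorial statements — exhibiting the enclosed shorter circuit when $\ell>d$, and eliminating every enclosed circuit in the saturated case $\ell=d$ — is the delicate point; I would carry it out by a minimal-counterexample argument on the number of enclosed faces, analogous to the monochromatic-cycle elimination already used for Schnyder decompositions, the saturation of the interior-to-$C$ edges being precisely what leaves no room for a counterclockwise circuit to survive inside a length-$d$ circuit.
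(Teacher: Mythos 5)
Your first part (deducing the distributive lattice from Lemma~\ref{lem:structure}, and transporting it through Theorem~\ref{thm:equiv}) matches the paper and is fine. The covering-relation part, however, rests on a false premise. You assert that the lattice order coincides with the pointwise order on height functions, and deduce that $O_C$ covers $O$ if and only if $C$ is ``innermost'', i.e.\ no counterclockwise circuit of $O$ encloses a proper nonempty subset of $\mathrm{int}(C)$. Neither claim is correct: the order is the \emph{transitive closure of simple-cycle pushings}, which is strictly weaker than the height order. Concretely, take the triangulation made of three nested triangles $T_0\supset C\supset D$ with the two annuli between them triangulated as antiprisms (nine vertices, $T_0$ external). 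In any $3$-orientation all twelve spoke edges point outward, so $C$ and $D$ are each oriented as directed $3$-cycles, and one may choose both counterclockwise. Then $D$ is a counterclockwise circuit whose interior is a proper nonempty subset of $\mathrm{int}(C)$, so $C$ is not ``innermost''; yet pushing $C$ \emph{is} a covering, because $O_D\not<O_C$ (after pushing $D$, no counterclockwise circuit has its interior inside the annulus between $D$ and $C$ --- every candidate cycle must traverse a spoke arc of value $0$). The same example shows that the statement you plan to prove in the case $\ell=d$ --- ``the saturation of the interior-to-$C$ edges leaves no room for a counterclockwise circuit inside a length-$d$ circuit'' --- is simply false, so that step cannot be carried out.

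The correct dichotomy, which is the one the paper uses, is in terms of \emph{chordal paths}, not enclosed circuits: a decomposition of the push of $C$ into several pushes forces the interiors of the smaller cycles to partition the disk $\mathrm{int}(C)$, and such a partition produces a directed path starting at a vertex of $C$, entering the interior with a positive arc, and returning to $C$. Your inflow identity (flow from $C$ into the interior equals $\ell-d$) is exactly the right computation, and for $\ell=d$ it kills all chordal paths in \emph{every} orientation, which is what makes $C$ rigid and its push a cover; no statement about circuits floating strictly inside $C$ is needed. For $\ell>d$ you still have a second gap: the directed path launched from the positive inward arc may spiral into a \emph{clockwise} directed cycle before it returns to $C$, and a clockwise directed cycle yields no pushable circuit. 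The paper sidesteps this by following the path of a single color $i$ of that arc in the associated Schnyder decomposition: $F_i$ is a forest whose paths end at external vertices, so the path cannot cycle and must hit $C$ again, producing a genuine chordal path $P$; the two cycles $C_1,C_2$ with $C=(C_1\cup C_2)\setminus P$ can then be pushed in succession, which both exhibits the intermediate element and certifies $O_{C_1}<O_C$ --- information that an arbitrary enclosed counterclockwise circuit would not give you.
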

\begin{proof}
The distributive lattice structure immediately follows from Lemma~\ref{lem:structure}.
Characterizing the cycle-pushings corresponding to covering relations can be done by a study very similar to the one in~\cite{Felsner:lattice} (for classical orientations), therefore we provide only a sketch here. 
Given a $d/(d-2)$ orientation $O$, a  path $P=(v_0,\ldots,v_r)$ is called \emph{directed} if for all $i$ in $\{0,...,r-1\}$ the arc from $v_i$ to $v_{i+1}$ has a positive value. 
A cycle $C$ is said to have a \emph{chordal path} in $O$ if there exists a directed
path starting and ending on $C$ having all its edges inside $C$.
 The cycle $C$ is called \emph{rigid} if it is simple and  has no chordal path
in any $d/(d-2)$-orientation. One easily checks that the pushing of a rigid (counterclockwise) cycle $C$
is a covering relation, that is, can not be realized as a sequence of pushings of some cycles $C_1,\ldots,C_r$
(the case $r=2$ is easy, $C_1$ and $C_2$ must share a continuous portion $P$ such that $C=(C_1\cup C_2)\backslash P$,
and $P$ is a chordal path of $C$; the case $r>2$ is done similarly by grouping $C_1,\ldots,C_r$ as
$C_1\cup(C_2\cup\ldots\cup C_r)$). Let $C$ be a cycle of length $d$ in $G$.
The Euler relation easily implies that, in any $d/(d-2)$-orientation of $G$,
each arc $(v,e)$ inside $C$ with its origin $v$ on $C$ 
has value $0$, hence $C$ is a rigid cycle, so that the pushing of $C$ (when $C$ is counterclockwise)
is a covering relation.
Now consider a counterclockwise circuit $C$ of length length $r>d$ in a $d/(d-2)$-orientation of $G$.
Again by the Euler relation, there is
at least one arc $a=(v,e)$ inside $C$ such that $v$ is on $C$ and $\Omega(a)>0$. Let $i$
be one of the colors of $a$ in the associated Schnyder decomposition, and let $P$ be the path of color $i$ starting
from $a$. Since $P$ ends on one of the external vertices, it has to hit the boundary of $C$, thereby forming a chordal path.
Hence the pushing of $C$ can be realized as two successive cycle-pushings (sharing $P$), so this is
not a covering relation.
\end{proof}

\fig{width=12cm}{lattice}{A covering cycle-pushing seen on clockwise labellings.}

Cycle-pushings have a nice formulation in terms of clockwise labellings.
In this context, an \emph{admissible cycle} is a cycle $C$  such that, for each counterclockwise arc
$a$ of $C$, the colors of the corners separated by $a$ around the origin of $a$ are distinct. The covering relation is the \emph{pushing} of admissible cycles of length $d$, which means the incrementation by $1$ (modulo $d$) of the color of every corner inside the cycle.
Figure~\ref{fig:lattice} illustrates a (covering) cycle-pushing in terms of clockwise-labellings.

\medskip

\section{Dual of Schnyder decompositions}\label{sec:duality}
In this section, we explore the structures obtained as ``dual'' of Schnyder decompositions. Recall that the \emph{dual} $G^*$ of a plane graph $G$ is the map obtained by drawing a vertex $v_f$ of~$G^*$ in each face $f$ of~$G$ and drawing an edge $e^*=\{v_f,v_g\}$ of~$G^*$ across each edge $e$ of~$G$ separating the faces $f$ and $g$ (this process is well-defined and unique up to the choice of the infinite face of~$G^*$, which can correspond to any of the external vertices of~$G$). An example is given in Figure~\ref{fig:dual-Schnyder}(a). The vertex dual to the external face of $G$ is called the \emph{root-vertex}
of $G^*$ and is denoted $v^*$. The edges $e$ and $e^*$ are called \emph{primal} and \emph{dual} respectively. The \emph{dual of a corner} of $G$ is the corner of $G^*$ which faces it. Observe that the degree of the vertex~$v_f$ is equal to the degree of the face~$f$, hence the dual of a $d$-angulation is a $d$-regular plane graph. Moreover, the $d$-angulation has girth $d$ if and only if its dual has mincut $d$.

From now on $G^*$ is a $d$-regular plane graph rooted at a vertex $v^*$. The edges $e_1^*,\ldots,e_d^*$ in counterclockwise order around  $v^*$ are called \emph{root-edges}, and the faces $f_1^*,\ldots,f_d^*$ in counterclockwise order ($e_i^*$ being between $f_i^*$ and $f_{i+1}^*$) are called the \emph{root-faces}.

\begin{Def}
A \emph{regular labelling} of~$G^*$ is the assignment to each corner of a color in $[d]$ such that
\begin{enumerate}
\item[(i)] The colors $1,2,\ldots,d$ appear in clockwise order around each non-root
vertex of~$G$ and appear in counterclockwise order around the root-vertex.
\item[(ii)] For all $i$ in $[d]$, the corners incident to the root-face $f_i^*$ have color $i$.
\item[(iii)] In clockwise order around each non-root face,  there is exactly one corner having a larger color than the next corner.
\end{enumerate}
\end{Def}

Note that this is exactly the dual definition of clockwise labellings on $d$-angulations.
Consequently we have:

\begin{lem}
A vertex-rooted $d$-regular plane graph $G^*$ admits a regular labelling if and only if $G^*$
has mincut $d$. In that case, regular labellings of $G^*$ are in bijection (by giving to a corner the same color as its dual) with clockwise labellings of $G$.
\end{lem}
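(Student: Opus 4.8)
The plan is to prove the statement by pure planar duality, reducing it to the already-established theory of clockwise labellings. Let $G=(G^*)^*$ be the dual of $G^*$. Since $G^*$ is $d$-regular, every face of $G$ has degree $d$, so $G$ is a $d$-angulation, and its external face corresponds to the root-vertex $v^*$ of $G^*$. Under this duality the external vertices $u_1,\ldots,u_d$ of $G$ correspond to the root-faces $f_1^*,\ldots,f_d^*$, the internal faces of $G$ to the non-root vertices of $G^*$, and the internal vertices of $G$ to the non-root faces of $G^*$. As recalled at the beginning of Section~\ref{sec:duality}, $G^*$ has mincut $d$ if and only if $G$ has girth $d$. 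The map sending each corner of $G^*$ to its dual corner of $G$ is a bijection on corners, so it suffices to check that, color by color, it carries regular labellings of $G^*$ to clockwise labellings of $G$ and back; the existence statement will then follow from the corollary following Theorem~\ref{thm:equiv}.

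The core of the argument is to match the three defining conditions under corner duality. The corners around a face $f$ of $G$ are exactly the dual corners around the vertex $v_f$ of $G^*$, and traversing the contour of a \emph{bounded} face $f$ with $f$ on the right visits them in clockwise order around $v_f$; symmetrically, the corners around a vertex $v$ of $G$ are the dual corners around the dual face of $G^*$, with clockwise-around-$v$ matching clockwise-around-the-dual-face (face on the right). Granting this, Condition~(i) for internal faces of $G$ (colors $1,\ldots,d$ clockwise around the face) becomes Condition~(i) for non-root vertices of $G^*$; Condition~(ii) (corners at $u_i$ colored $i$) becomes Condition~(ii) (corners at $f_i^*$ colored $i$), with compatible indexing since $u_1,\ldots,u_d$ clockwise around the external face dualize to $f_1^*,\ldots,f_d^*$ counterclockwise around $v^*$; and Condition~(iii) for internal vertices of $G$ becomes Condition~(iii) for non-root faces of $G^*$. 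Hence the corner bijection restricts to a bijection between clockwise labellings of $G$ and regular labellings of $G^*$, which is the second assertion.

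For the existence part I would combine this bijection with the corollary following Theorem~\ref{thm:equiv} (a $d$-angulation admits a clockwise labelling iff it has girth $d$) and the equivalence girth-$d$-for-$G$ $\Leftrightarrow$ mincut-$d$-for-$G^*$. If $G^*$ has mincut $d$ then $G$ has girth $d$, hence admits a clockwise labelling, hence $G^*$ admits a regular labelling; conversely a regular labelling of $G^*$ dualizes to a clockwise labelling of $G$ (note that Condition~(ii) already forces the $d$ root-faces, equivalently the external vertices of $G$, to be distinct, so the dual coloring is a genuine clockwise labelling), whence $G$ has girth $d$ and $G^*$ has mincut $d$.

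The one delicate point, and the only step requiring real care, is the orientation bookkeeping in Condition~(i) at the unbounded face. I must verify that traversing the contour of the external face of $G$ with that face on the \emph{left} — the convention fixed in the footnote to the definition of clockwise labelling — corresponds under duality precisely to the \emph{counterclockwise} order around the root-vertex $v^*$ demanded by Condition~(i) of regular labellings, while all bounded faces undergo no such flip. Once this orientation reversal at the root is pinned down, the remainder is the routine verification that clockwise-around-a-bounded-face and clockwise-around-its-dual-vertex agree, so the substance of the proof lies entirely in keeping the planar conventions straight rather than in any new combinatorial argument.
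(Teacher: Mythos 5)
Your proposal is correct and follows essentially the same route as the paper, which simply observes that the definition of a regular labelling is the exact planar dual of the definition of a clockwise labelling and then invokes the corollary of Theorem~\ref{thm:equiv} together with the girth--mincut duality stated at the start of Section~\ref{sec:duality}. Your careful tracking of the orientation conventions (in particular the flip at the external face, which is precisely what the footnote in the definition of clockwise labellings is designed to handle) spells out what the paper leaves implicit.
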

A regular labelling and the corresponding clockwise labelling are said to be \emph{dual} to each other. We now define a structure which is dual to Schnyder decompositions.

\begin{Def}\label{def:regular_dec}
A \emph{regular decomposition} of~$G^*$ is a
covering of the edges of $G^*$ by $d$ spanning
trees $T_1^*,\ldots,T_d^*$  (one tree for each color $i\in [d]$), each oriented toward $v^*$, and such that
\begin{enumerate}
\item[(i)] Each edge $e^*$ not incident to $v^*$ appears in two of the trees $T_i^*$
and $T_j^*$, and the directions of $e^*$  in $T_i^*$ and $T_j^*$ are opposite.
 \item[(ii)] For $i\in[d]$, the root-edge $e_i^*$ appears only in $T_i^*$.
\item[(iii)] Around any internal vertex $v$, the outgoing edges $e_1,\ldots,e_d$ leading $v$ to its parent in $T^*_1,\ldots,T_d^*$ appear in clockwise order around $v$.
\end{enumerate}
We denote by $(T_1^*,\ldots,T_d^*)$ the decomposition.
\end{Def}

Figure~\ref{fig:dual-Schnyder}(d) represent a regular decomposition of a 5-regular graph $G^*$.
As we have done with Schnyder decompositions, we shall talk about the \emph{colors} of edges and arcs of $G^*$.

\begin{Remark} \label{rk:dual}
Property (iii)  implies that a directed path of color $i+1$ cannot cross a directed path of color $i$ from right to left.
\end{Remark}

\fig{width=\linewidth}{dual-Schnyder}{(a) Dual of a pentagulation. (b) Rule (iii) for the outgoing edges around a non-root vertex. (c) Correspondence between the arc-colors of an edge $e\in G$ and of its dual $e^*\in G^*$ through the mapping $\chi$. (d) A regular decomposition  $(T_1^*,\ldots,T_d^*)=\chi(F_1,\ldots,F_d)$ of $G^*$.}

We now establish the bijective correspondence between regular labellings and regular decompositions of $G^*$.
For a regular labelling $L^*$ of $G^*$, we define $\xi(L^*)=(T_1^*,\ldots,T_d^*)$, where $T_i^*$ is the oriented subgraph of $G^*$ made of every arc $(u,e)$ such that $u\neq v^*$ and the color of the corner preceding $e$ in clockwise order around $u$ is $i$. By duality, $L^*$ corresponds to a clockwise labelling $L$ of $G$, which itself corresponds via the bijection $\Phi$ to a Schnyder decomposition $(F_1,\ldots,F_d)$ of $G$; we then define $\chi(F_1,\ldots,F_d)=(T_1^*,\ldots,T_d^*)$.

\begin{lem}\label{lem:alternative_def}
Let $S=(F_1,\ldots,F_d)$  be a Schnyder decomposition of $G$ and let $R=(T_1^*,\ldots,T_d^*)=\chi(F_1,\ldots,F_d)$. For each internal edge $e$ of $G$, the missing colors of $e$ on $S$ are the colors of the dual edge $e^*$ on $R$, and the orientations obey the rule represented in Figure~\ref{fig:dual-Schnyder}(c).  Hence, if we denote by $T_i$ the spanning tree of $G$ obtained from the forest $F_i$ by adding every external edge except $\{v_i,v_{i+1}\}$, then $T_i^*$ is the spanning tree of $G^*$ which is the \emph{complemented dual} of $T_i$: it is made of all the edges of $G^*$ which are dual of the edges of $G$ not in $T_i$.
Lastly, the spanning tree $T_i^*$ is oriented toward the root-vertex $v_i^*$.
\end{lem}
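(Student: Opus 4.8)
The plan is to treat the clockwise labelling $L$ (equivalently, by duality, the regular labelling $L^*$) as the primary object from which both $S=\Phi(L)$ and $R=\xi(L^*)$ are read off, so that the statement reduces to a purely local comparison of corner-colors around a primal edge $e$ and its dual $e^*$. First I would fix an internal edge $e=\{u,v\}$ of $G$, bordering faces $f$ and $g$, and record the colors of its four incident corners. Writing $i,j$ for the colors of the corners of $u$ preceding and following $e$ in clockwise order, Lemma~\ref{lem:bijLO} together with Property~(i) of clockwise labellings gives that the corners of $v$ preceding and following $e$ have colors $j+1$ and $i-1$; hence, by the rule defining $\Phi$, the arc $(u,e)$ carries colors $i,\dots,j-1$ and the arc $(v,e)$ carries colors $j+1,\dots,i-2$, so the two \emph{missing} colors of $e$ in $S$ are exactly $j$ and $i-1$.

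The heart of the proof is to transport this data across the duality. Using that the dual of a corner of $G$ is the facing corner of $G^*$ and carries the same color, I would identify the four corners of $G^*$ incident to $e^*$ with the four corners of $G$ incident to $e$: the two corners on the $f$-side become corners at $v_f$ and the two on the $g$-side become corners at $v_g$, with a $90^\circ$ rotation exchanging the roles of ``around a vertex'' and ``around a face''. Reading off the definition of $\xi$ (an arc $(w,h)$ lies in $T_k^*$ iff the corner preceding $h$ in clockwise order around $w$ has color $k$) then shows that the arc of $e^*$ at $v_f$ lies in $T_{i-1}^*$ directed away from $v_f$, while the arc at $v_g$ lies in $T_j^*$ directed away from $v_g$. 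Thus the colors of $e^*$ in $R$ are $\{i-1,j\}$, matching the missing colors of $e$, and the two occurrences run in opposite directions, as in Figure~\ref{fig:dual-Schnyder}(c). This bookkeeping of clockwise/counterclockwise conventions through the duality rotation is the step most likely to hide sign or off-by-one errors, so I would check it against the explicit local picture of Figure~\ref{fig:dual-Schnyder}.

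With the color correspondence in hand, the complemented-dual statement follows by an edge-by-edge membership comparison. For an internal edge $e$, we have $e^*\in T_i^*$ iff $i$ is a color of $e^*$ in $R$, iff $i$ is missing from $e$ in $S$, iff $e\notin F_i$, i.e. iff $e\notin T_i$. For the external edges I would first check, using that the root-faces $f_1^*,\dots,f_d^*$ are the duals of the external vertices $u_1,\dots,u_d$ with matching indices (both carrying color $i$), that the dual of $\{u_i,u_{i+1}\}$ is precisely the root-edge $e_i^*$; then Property~(ii) of regular decompositions ($e_i^*$ lies only in $T_i^*$) matches the fact that $\{u_i,u_{i+1}\}$ is the unique external edge absent from $T_i$ and present in every $T_j$ with $j\neq i$. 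Hence $e^*\in T_i^*\iff e\notin T_i$ for every edge, which is exactly the assertion that $T_i^*$ is the complemented dual of $T_i$. Finally, a short edge-count shows $T_i$ is a spanning tree of $G$: since $F_i$ spans $v(G)-2$ vertices in $d-2$ trees it has $v(G)-d$ edges, and adjoining the $d-1$ retained external edges yields $v(G)-1$ edges on a connected graph. Its complemented dual $T_i^*$ is therefore a spanning tree of $G^*$; and since every non-root vertex has exactly one outgoing arc in $T_i^*$ while $v^*$ has none, this tree is oriented toward $v^*$.
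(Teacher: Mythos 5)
Your proof is correct and follows essentially the same route as the paper's, which simply declares the first assertion ``clear from the definition of the mappings $\Phi$ and $\xi$'' and then deduces the complemented-dual, spanning-tree, and orientation statements exactly as you do; your write-up just makes the corner-color bookkeeping around $e$ and $e^*$ explicit. One caveat: for the external edges you invoke Property~(ii) of regular decompositions for $R$, but at this point it has not yet been established that $R=\chi(S)$ is a regular decomposition --- that is the content of Theorem~\ref{thm:equiv-dual}, whose proof relies on the present lemma, so the appeal is circular as stated. The fix is immediate and stays within your framework: all corners incident to the root-face $f_i^*$ carry color $i$ in the regular labelling dual to $L$, so the corner preceding $e_i^*$ in clockwise order around $v_i^*$ has color $i$, and by the definition of $\xi$ the arc $(v_i^*,e_i^*)$ lies in $T_i^*$ and in no other tree; this is exactly what your edge-by-edge membership comparison needs.
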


Lemma \ref{lem:alternative_def} gives a more direct way to define the mapping $\chi$: the subgraphs $(T_1^*,\ldots,T_d^*)=\chi(F_1,\ldots,F_d)$ can be defined as the complemented duals of
$T_1,\ldots,T_d$ oriented toward $v^*$; see Figure~\ref{fig:dual-Schnyder}(d).

\begin{proof}
The first assertion is clear from the definition of the mappings $\Phi$ and $\xi$. From this, it follows that $T_i^*$ is made of all the edges of $G^*$ which are dual of edge not in $T_i$. It is well-known that the complemented dual of a spanning tree is a spanning tree, hence $T_i^*$ is a spanning tree. Lastly, since every vertex of $G^*$ except $v^*$ has one outgoing edge of color $i$, the tree $T_i^*$ is oriented toward $v^*$.
\end{proof}

\begin{thm}\label{thm:equiv-dual}
The mapping $\xi$ is a bijection between regular labellings and regular decompositions of $G^*$. Consequently the mapping $\chi$ is a bijection between the Schnyder decompositions of $G$ and the regular decompositions of $G^*$.
\end{thm}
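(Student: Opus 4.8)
The plan is to establish the first assertion (that $\xi$ is a bijection between regular labellings and regular decompositions of $G^*$) directly, and then derive the second assertion (that $\chi$ is a bijection on the dual structures) as a formal consequence of the bijections already constructed. For the second assertion, I would argue by composition of known bijections. By the duality lemma, regular labellings of $G^*$ are in bijection with clockwise labellings of $G$; by Proposition on $\Phi$, clockwise labellings of $G$ are in bijection with Schnyder decompositions of $G$; and by the first assertion of this theorem, regular labellings of $G^*$ are in bijection with regular decompositions of $G^*$. Since $\chi$ was \emph{defined} precisely as the composite $\xi \circ \Phi^{-1}$ (reading through the duality correspondence between $L^*$ and $L$), it is a composition of bijections and hence a bijection. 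So once $\xi$ is shown to be bijective, the statement about $\chi$ requires only tracing through the definition.

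The real work is therefore in proving that $\xi$ is a bijection. First I would verify that $\xi$ is well-defined, i.e.\ that for any regular labelling $L^*$, the tuple $(T_1^*,\ldots,T_d^*)=\xi(L^*)$ is genuinely a regular decomposition. Here I would lean heavily on Lemma~\ref{lem:alternative_def}, which already identifies each $T_i^*$ as the complemented dual of the spanning tree $T_i$ of $G$, confirms that each $T_i^*$ is a spanning tree oriented toward $v^*$, and pins down the colors and orientations of each dual edge via the rule in Figure~\ref{fig:dual-Schnyder}(c). From that rule, Property~(i) of regular decompositions (each non-root edge lies in exactly two trees, in opposite directions) follows because the missing colors of a primal internal edge come in a complementary pair oriented oppositely; Property~(ii) (the root-edge $e_i^*$ lies only in $T_i^*$) follows from Property~(ii) of Schnyder decompositions describing the colors incident to the external vertices of $G$; and Property~(iii) (clockwise order of the parent-edges around a non-root vertex) is exactly the dual of the clockwise crossing rule, matching Remark~\ref{rk:dual}.

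For bijectivity of $\xi$ I would exhibit the inverse map explicitly. Given a regular decomposition $(T_1^*,\ldots,T_d^*)$, each non-root vertex $u$ has exactly one outgoing edge in each tree, and these appear in clockwise order by Property~(iii); I recover the corner colors by declaring the corner preceding (in clockwise order around $u$) the outgoing edge of $T_i^*$ to receive color $i$, and extending to the root-faces by Property~(ii). The main obstacle, and the step deserving the most care, is checking that this reconstruction actually yields a \emph{valid} regular labelling — in particular Property~(iii) of regular labellings, the ``exactly one descent'' condition around each non-root face. This is the dual of the clockwise-jump/outdegree bookkeeping of Lemma~\ref{lem:bijLO}, and I would verify it by transporting the labelling through duality to a labelling of $G$ and invoking the already-proven equivalence that the candidate is a clockwise labelling iff the induced orientation is a $d/(d-2)$-orientation; the face-descent condition dualizes to the vertex-descent condition, and consistency of the color propagation is guaranteed by the conflict-free argument in the proof of Proposition~\ref{prop:psi}. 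Verifying that $\xi$ and this reconstruction are mutually inverse then reduces to checking they agree on the local data (corner colors versus parent-edges) around each non-root vertex, which is routine given the setup.
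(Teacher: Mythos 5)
Your overall architecture matches the paper's: well-definedness of $\xi$ via Lemma~\ref{lem:alternative_def}, injectivity via the explicit corner-recoloring rule, and the statement about $\chi$ by composing $\xi$ with the bijections $\Phi$ and the duality of labellings. The gap is in the surjectivity step, exactly where you say the care is needed. Your plan is to transport the candidate coloring to $G$ and invoke Proposition~\ref{prop:psi}, but the surjectivity half of Proposition~\ref{prop:psi} takes a $d/(d-2)$-\emph{orientation} as input: to use it you must first know that the orientation of $G$ induced by the regular decomposition gives every internal vertex of $G$ outdegree exactly $d$. That outdegree condition at an internal vertex of $G$ is precisely the ``exactly one descent around the dual non-root face of $G^*$'' condition (Property~(iii) of regular labellings) that you are trying to establish, so the argument as stated is circular. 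Saying that ``the face-descent condition dualizes to the vertex-descent condition'' only restates what must be proved.

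The paper closes this gap with two ingredients that are absent from your proposal. First, a global count: the jumps along the two arcs of a non-root edge sum to $d-2$ (this uses Property~(i) of regular decompositions to exclude a jump of $d-1$), so the total jump over all arcs is $(d-2)m=d\,\mathrm{F}$ with $\mathrm{F}$ the number of non-root faces; since the jump around any face is a nonnegative multiple of $d$, it suffices to show each non-root face contributes a \emph{positive} multiple to conclude that each contributes exactly $d$ and each root-face contributes $0$. Second, the positivity itself comes from the acyclicity of the trees: if all corners of a non-root face had the same color $i$, its entire contour would lie in $T_i^*$, contradicting that $T_i^*$ is a tree. This use of the spanning-tree hypothesis is the one genuinely new idea in the proof of Theorem~\ref{thm:equiv-dual}, and some substitute for it (plus the global count, or another mechanism ruling out faces with zero or with two or more descents) is needed to make your surjectivity argument go through; the bookkeeping of Lemma~\ref{lem:bijLO} runs in the opposite direction and does not supply it.
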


\begin{proof}
We first show that the image $(T_1^*,\ldots,T_d^*)=\xi(L^*)$ of any regular labelling is a regular decomposition. By Lemma \ref{lem:alternative_def},  for all $i$ in $[d]$, $T_i^*$ is a spanning tree of $G^*$ oriented toward $v^*$ and arriving to $v^*$ via the edge $e_i^*$, 
and each edge non-incident to $v^*$ has two colors in opposite directions. Hence, Properties (i) and (ii) of regular decompositions hold. 
Moreover, it is clear from the definition of $\xi$ that Property~(iii) of 
 regular decompositions holds. Thus, $(T_1^*,\ldots,T_d^*)$ is a regular decomposition.

We now prove that $\xi$ is bijective. The injectivity holds since the regular labelling $L^*$ is clearly recovered from $(T_1^*,\ldots,T_d^*)$ by giving the color $i\in[d]$ to each corner of $G^*$ preceding an outgoing arc of color $i$ in clockwise order around a non-root vertex, and giving the color $i$ also to the corner incident to $v^*$ in the root-face $f_i^*$. To prove that $\xi$ 
is surjective we must show that applying this rule to any regular decomposition $(T_1^*,\ldots,T_d^*)$ gives a regular labelling. For this purpose, we use an alternative characterization of regular labellings (which we state only as a sufficient condition):

\noindent{\bf Claim.}
If $L^*$ is a coloring in $[d]$ of the corners of $G^*$ satisfying the properties below, then $L^*$ is a regular labelling:
\begin{enumerate}
\item[(i')] The colors $1,2,\ldots,d$ appear in clockwise order around each non-root
vertex of~$G$ and appear in counterclockwise order around the root-vertex.
\item[(ii')] For each non-root edge $e=\{u,v\}$,  the corners preceding $e$ in clockwise order around
 $u$ and $v$ respectively have distinct colors.
\item[(iii')]  For each root-edge $e_i^*=\{v^*,v_i^*\}$, the corners
before and after $e_i^*$ in clockwise order around $v^*$ have color $i+1$ and $i$ respectively, and the corners
before and after $e_i^*$ in clockwise order around $v_i^*$ have color $i$ and $i+1$ respectively.
\item[(iv')] No non-root face has all its corners of the same color.
\end{enumerate}

\medskip

\noindent\emph{Proof of the claim.} We suppose that $L^*$ satisfies (i'), (ii'), (iii'), (iv') and have to prove that it satisfies Properties (i), (ii) and (iii) of regular labellings; in fact
we just have to prove it satisfies (ii) and (iii), since (i) is the same as (i').
For this purpose it is convenient to define the \emph{jump along an arc}
$a=(u,e)$ of $G^*$: the jump along $a$
is the quantity in $\{0,\ldots,d-1\}$ equal to $j-i$ modulo $d$, where $i$ and $j$ are the colors of the corners
on the right of $a$ at the origin and at the end of $a$ respectively.
Property~(ii') implies that a jump is never $d-1$. Hence the jumps along the two arcs forming a non-root edge always add up to $d-2$ (because this sum plus 2 is a multiple of $d$).   Property~(iii') implies that the jump along both arcs of a root-edge is $0$. Thus  the sum of jumps along all arcs  is $(d-2) m$, where $m$ is the number of non-root edges of $G^*$. Moreover, by Equation~\eqref{eq:incidence}, $(d-2) m=d\, \mathrm{F}$, where $\mathrm{F}$ is the number of non-root faces of $G^*$. We now compute the sum of jumps as a sum over faces. Note that for any face $f$, the sum of the jumps along the arcs with $f$ on their right is a multiple of $d$. Moreover, Property~(iv') guarantees that this is a positive multiple of $d$ when $f$ is a non-root face. Since the sum of all jumps along arcs is $d\, \mathrm{F}$, we conclude that the sum of jumps is $d$ for any non-root face (implying Property (iii)) and 0 for any root-face (implying Property (ii) since the color of one of the corners of $f_i$ is $i$ by (iii')).
\qedclaim

Given the Claim, it is now easy to check that the coloring $L^*$ obtained from a regular decomposition $S=(T_1^*,\ldots,T_d^*)$ is a regular labelling.
Since $S$ satisfies Property (iii) of regular decompositions, the coloring $L^*$ 
satisfies Property~(i)=(i') of regular labellings.
Since $S$ satisfies Property (i) (resp. (ii)) of regular decompositions, $L^*$ satisfies Property (ii') (resp. (iii')) of the Claim. Lastly, $L^*$ satisfies Property (iv') of the Claim since if a face $f\notin\{f_1^*,\ldots,f_d^*\}$ had all its corner of the same color~$i$, then the whole contour of $f$ would belong to $T_i^*$, contradicting the fact that $T_i^*$ is a tree. Hence, $L^*$ is a regular labelling, and $\xi$ is surjective. Thus, $\xi$ and $\chi$ are bijections.
\end{proof}

\medskip
\section{Even Schnyder decompositions and their duals}\label{sec:even}
In this section we focus on even values of~$d$ and study a special class of Schnyder decompositions and clockwise labellings (and their duals), which are called \emph{even}. 

\subsection{Even Schnyder decompositions and even clockwise labellings}~\\
Let $d=2p$ be an even integer greater or equal to 4. A $d/(d-2)$-orientation is called \emph{even} if the value of every arc is even. Recall that Theorem~\ref{thm:exists} grants the existence of a $p/(p-1)$-orientation for any $2p$-angulation of girth $2p$. Moreover, $p/(p-1)$-orientations clearly identify with even $d/(d-2)$-orientations (by multiplying the value of every arc). The Schnyder decompositions and clockwise labellings associated to even $d/(d-2)$-orientations (by the bijections $\Phi$ and $\Psi$ defined in Section~\ref{sec:equiv-definitions}) are called respectively \emph{even Schnyder decompositions} and \emph{even clockwise labellings}.\\

In the following we consider a $2p$-angulation $G$ of girth $2p$ with external vertices denoted $u_1,\ldots,u_{2p}$ in clockwise order around the external face. The $2p$-angulation $G$ is bipartite (since faces have even length and generate all cycles) hence its vertices can be properly colored in black and white. We fix the coloring by requiring the external vertex $u_1$ to be black (so that $u_i$ is black if and only if $i$ is odd). We first characterize even clockwise labellings, an example of which is presented in Figure~\ref{fig:hexangulation}(a).

\begin{lem} \label{lem:characterization-even-labellings}
A clockwise labelling of~$G$ is even if and only if the corners incident to black vertices have odd colors, while corners incident to white vertices have even colors.
\end{lem}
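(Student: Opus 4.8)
The plan is to prove Lemma~\ref{lem:characterization-even-labellings} by relating the parity of corner colors around a single edge to the parity condition on the clockwise-jumps, exploiting the bipartite structure of~$G$. Recall from Section~\ref{sec:ddmt} that the clockwise labelling $L$ determines and is determined by the $d/(d-2)$-orientation $\Psi(L)$ whose arc-values are exactly the clockwise-jumps, and that $L$ is even precisely when $\Psi(L)$ is even, i.e.\ when every clockwise-jump is even. So the lemma is equivalent to the statement that all clockwise-jumps are even if and only if the colors respect the black/white parity pattern described.

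First I would establish the ``if'' direction, which is the easier one. Suppose every corner at a black vertex has odd color and every corner at a white vertex has even color. Take any arc $a=(u,e)$; the clockwise-jump across $a$ is $\ell_2-\ell_1 \bmod d$, where $\ell_1,\ell_2$ are the colors of the corners preceding and following $e$ in clockwise order around~$u$. Since both corners are incident to the same vertex~$u$, they have the same parity, so $\ell_2-\ell_1$ is even; because $d=2p$ is even, the residue modulo~$d$ is still even. Hence every clockwise-jump is even, so $\Psi(L)$ is an even orientation and $L$ is an even clockwise labelling.

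The substance is in the ``only if'' direction, and the main obstacle is to show that evenness of all jumps forces the global black/white color pattern, starting only from the boundary data. The key propagation tool is Property~(i) of clockwise labellings: crossing an edge clockwise around a face increments the color by~$1$, and each face of the bipartite graph~$G$ alternates black and white vertices around its boundary. I would argue as follows. By Property~(ii), the corners at $u_i$ all have color~$i$, and since $u_i$ is black iff $i$ is odd, the boundary colors already satisfy the parity pattern. Now propagate inward: if $L$ is even, then around any vertex all clockwise-jumps are even, so by rule~(2) in the proof of Proposition~\ref{prop:psi} all corners incident to a fixed vertex have colors of a single fixed parity, namely that vertex's ``parity class.'' It remains to check this class is \emph{odd} for black and \emph{even} for white vertices everywhere, not merely locally constant. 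For this I would use connectivity and rule~(1): walking clockwise across a single edge $e$ of a face changes the color by~$+1$ in parity, and the two endpoints of~$e$ have opposite colors (black/white), matching the required opposite parities. Concretely, for an edge $e=\{u,v\}$ with $u$ black and $v$ white, Property~(i) applied to a face incident to~$e$ shows the color following~$e$ (at~$v$) differs by~$1$ from the color preceding it (at~$u$), hence has opposite parity; combined with the already-known parity at the black boundary vertices, an induction over a spanning tree of~$G$ rooted at the boundary propagates the correct parity (odd at black, even at white) to every vertex.

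I expect the delicate point to be making the induction airtight: one must verify that the parity assignment is consistent regardless of the path chosen to reach a given vertex, which is exactly the no-conflict/cocycle argument already carried out for the color propagation in Proposition~\ref{prop:psi}. Since that proposition guarantees the full coloring~$L$ is well-defined and single-valued, the parity it induces on each vertex is automatically path-independent, so no separate consistency check is needed beyond invoking that result. The cleanest write-up therefore reduces the ``only if'' direction to: (a) evenness of jumps $\Rightarrow$ each vertex has a single color-parity; (b) rule~(1) forces adjacent vertices to have opposite parities; (c) the boundary anchors the parities correctly via Property~(ii); and (d) connectivity of~$G$ and the bipartition propagate this to all of~$G$.
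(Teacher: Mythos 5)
Your proof is correct and follows essentially the same route as the paper: both reduce evenness of the labelling to evenness of all clockwise-jumps, observe that this is equivalent to all corners at a given vertex sharing a parity, and then use Property~(i) (colors increase by $1$ around faces, so adjacent vertices get opposite parities) together with the anchoring at the external vertices to identify the parity classes with the black/white bipartition. The only difference is presentational: your worry about path-independence is unnecessary since the coloring $L$ is already given and its per-vertex parity is determined directly, which is why the paper dispenses with the spanning-tree induction.
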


\begin{proof}  By definition, a clockwise labelling is even if and only if the value of every arc is even in the associated $d/(d-2)$-orientation $O=\Psi(L)$. By definition of the bijection $\Psi$, this condition is equivalent to the fact that $L$ has only even jumps across arcs. Equivalently, \emph{the colors of the corners incident to a common  vertex all have the same parity}. This, in turns, is equivalent to the fact that the parity is odd around black vertices and even around white vertices because the parity changes from a corner to the next around a face (by Property~(i) of clockwise labellings).
\end{proof}

\begin{Remark}
The parity condition ensures that, in an even clockwise labelling, one can
replace every color $i\in [d]$ by $\lfloor (i-1)/2\rfloor$  with no loss of information.
For quadrangulations ($p=2$), this means that the colors are  $0,0,1,1$ around
each face. Such labellings (and extensions of them) for quadrangulations were recently
 studied  by Felsner et al~\cite{FeHuKa}.
The even clockwise labellings were also considered in~\cite{Barriere-Huemer:4-Labelings-quadrangulation} in our form (colors $1,2,3,4$ around
a face) to design a straight-line drawing algorithm for quadrangulations, which 
we will recall in Section~\ref{sec:eq_line}.  
\end{Remark}

We now come to the characterization of even Schnyder decompositions.

\begin{lem}\label{lem:characterization-even-Schnyder}
A Schnyder decomposition of~$G$ is even if and only if the two missing colors of each
internal edge have different parity (equivalently, the edge has as many even as odd colors in $\{1,\ldots,2p\}$). In this case, for all $i\in[p]$ and for each black (resp. white) internal vertex $v$, the edges leading $v$ to its parent in $F_{2i}$ and in $F_{2i-1}$ (resp. in $F_{2i}$
and in $F_{2i+1}$) are the same.
\end{lem}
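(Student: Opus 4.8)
The plan is to transport the whole statement through the bijection $\Phi$ to the side of clockwise labellings, where the characterization of evenness is already available as Lemma~\ref{lem:characterization-even-labellings}. First I would record the dictionary supplied by $\Phi$. If $i$ and $j$ denote the colors of the corners preceding and following an internal edge $e=\{u,v\}$ in clockwise order around $u$, then (as computed in the proof of Lemma~\ref{lem:bijLO}) the corners flanking $e$ around $v$ have colors $j+1$ and $i-1$, the arc $(u,e)$ carries the colors $i,i+1,\ldots,j-1$, and the arc $(v,e)$ carries the colors $j+1,\ldots,i-2$. Hence the two missing colors of $e$ are exactly $i-1$ and $j$, and they have different parity if and only if $i$ and $j$ have the same parity, that is, if and only if the two corners flanking $e$ around $u$ (equivalently, around $v$) share the same parity.

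With this dictionary the equivalence is immediate in one direction and needs a short propagation argument in the other. For the ``only if'' direction, suppose the decomposition is even, so that $L=\Phi^{-1}(S)$ is even; by Lemma~\ref{lem:characterization-even-labellings} all corners at a given vertex share a common parity, hence $i$ and $j$ above have the same parity and the two missing colors of every internal edge differ in parity. For the converse, I would assume that the missing colors of every internal edge differ in parity, so that across each internal edge the two flanking corners have equal parity; running over all edges incident to a fixed internal vertex forces all corners around that vertex to have a common parity. By Property~(i) of clockwise labellings the colors around each $2p$-valent face are $1,2,\ldots,2p$ in clockwise cyclic order, so consecutive corners of a face, and therefore the two endpoints of each edge of $G$, carry opposite parities. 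Thus the vertex-parity is a proper $2$-coloring of the connected graph $G$; since it agrees with the black/white coloring at the external vertices (where $u_i$ bears color $i$ and is black precisely when $i$ is odd), it coincides with it everywhere, and Lemma~\ref{lem:characterization-even-labellings} shows that $L$, hence $S$, is even.

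Finally, for the description of the coinciding forests I would read off the colors of the outgoing arcs at an internal vertex $v$ directly from the even labelling. By definition of $\Phi$, each outgoing arc $(v,e)$ carries a block of consecutive colors $m,m+1,\ldots,n-1$, where $m,n$ are the colors of the corners preceding and following $e$ around $v$; this block has even length, because for an even labelling the clockwise jumps are even. If $v$ is black then all its corners are odd, so $m$ is odd and the block splits into consecutive pairs $\{2i-1,2i\}$; consequently the outgoing arc of color $2i-1$ coincides with the one of color $2i$, meaning that the edges leading $v$ to its parent in $F_{2i-1}$ and in $F_{2i}$ are the same. If $v$ is white then all its corners are even, $m$ is even, and the same analysis groups the colors into pairs $\{2i,2i+1\}$, giving the stated coincidence of $F_{2i}$ and $F_{2i+1}$. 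The only genuinely delicate point is the parity-propagation used in the converse direction; every other step is a direct translation through $\Phi$ together with Lemma~\ref{lem:characterization-even-labellings}.
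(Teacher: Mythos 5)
Your proof is correct. The dictionary you set up (the missing colors of $e$ are $i-1$ and $j$, so they differ in parity exactly when the two corners flanking $e$ at either endpoint agree in parity) is right, and your argument for the coinciding parent edges is the same as the paper's: at a black vertex every outgoing arc carries a block of consecutive colors of even length starting at an odd value, hence splitting into pairs $\{2i-1,2i\}$. Where you genuinely diverge is in the first equivalence. The paper stays on the decomposition side: since $\Gamma$ is color-deletion, evenness means every arc carries an even number of colors, and Remark~\ref{rk:immediate-csq-ii}(a) makes the per-edge equivalence with ``missing colors of different parity'' immediate in both directions, with no propagation at all. You instead route through Lemma~\ref{lem:characterization-even-labellings}, which costs you a propagation argument in the converse direction (first that all corners around a vertex share a parity, then a global $2$-coloring argument to match the parity classes with the black/white classes). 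That detour is valid but avoidable: the number of colors on the arc $(u,e)$ is precisely the clockwise jump $j-i \bmod d$, so your per-edge parity condition says exactly that all jumps are even, which is the definition of an even labelling under $\Psi$; the appeal to the black/white characterization and to the bipartiteness of $G$ is then unnecessary. What your approach buys is an explicit link to Lemma~\ref{lem:characterization-even-labellings}; what the paper's buys is a purely local, two-line equivalence.
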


\begin{proof} Recall that the bijection $\Gamma=\Psi\circ\Phi^{-1}$ from Schnyder decompositions to $d/(d-2)$-orientations is simply the ``color deletion'' mapping. Thus a Schnyder decomposition is even if and only if \emph{the number of colors of every arc of internal edge is even}. Recall from Remark~\ref{rk:immediate-csq-ii}(a) that if $i,j$ are the colors missing from an edge $e$ then the colors $i+1,\ldots,j-1$ are all in one direction and the colors $j+1,\ldots,i-1$ are all in the other direction. Therefore the emphasized property above is equivalent to saying that in the Schnyder decomposition the two missing colors of any internal edge have different parity.

To prove the second statement, recall that, by Lemma~\ref{lem:characterization-even-labellings}, the colors of the corners around a black vertex are odd in an even clockwise labelling. Thus, in the corresponding even Schnyder decomposition, the colors of an arc going out of a black vertex are of the form $2i-1,2i,\ldots,2j-1,2j$ for some $i,j\in[p]$. Similarly, the colors of an arc going out of a white vertex are of the form $2i,2i+1,\ldots,2j,2j+1$ for some $i,j\in[p]$.
\end{proof}

Lemma~\ref{lem:characterization-even-Schnyder} shows that there are redundancies in considering both the odd and even colors of an even Schnyder decomposition.
Let $\Lambda$ be the mapping which associates to an even Schnyder decomposition
$(F_1,\ldots,F_{2p})$ the covering $(F_1',\ldots,F_p')$ of the internal edges edges of $G$ by the forests of even color, that is,  $F_i':=F_{2i}$ for  $i\in [p]$. The forests $F_i'=F_{2i},i=1,2,3$ are represented in Figure~\ref{fig:hexangulation}.

\begin{Def}
A \emph{reduced Schnyder decomposition} of $G$ is a covering of the internal
edges of $G$ by oriented forests $F_1',\ldots,F_p'$ such that
\begin{enumerate}
\item[(i')] Each internal edge $e$ appears in $p-1$ of the forests.
\item[(ii')] For each $i\in [p]$, $F_i'$ spans all  vertices except $u_{2i},u_{2i+1}$; it is made of $2p-2$ trees each containing one of the external vertices $u_j,j\neq 2i,2i+1$, and the tree containing $u_j$ is oriented toward $u_j$ which is considered as its \emph{root}.
\item[(iii')] Around any internal vertex $v$, the edges $e_1',\ldots,e_p'$ leading $v$ to its parent in $F'_1,\ldots,F'_p$ appear in clockwise order around $v$ (some of these edges can be equal). Moreover, if $v$ is a black (resp. white) vertex, the incoming edges of color $i$ are between $e_{i+1}'$ and $e_{i}'$ (resp. between $e_{i}'$ and $e_{i-1}'$)
    in clockwise order
    around $v$ and are distinct from these edges; see Figure~\ref{fig:hexangulation}(c).

\end{enumerate}
\end{Def}

\begin{thm}\label{thm:compact-even-Schnyder}
The mapping $\Lambda$ is a bijection between
even Schnyder decompositions of $G$
and reduced Schnyder decompositions of $G$.
\end{thm}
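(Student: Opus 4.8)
The plan is to establish that $\Lambda$ is a well-defined bijection by exhibiting its inverse, namely a reconstruction of the full even Schnyder decomposition $(F_1,\ldots,F_{2p})$ from the reduced data $(F_1',\ldots,F_p')$. First I would check that $\Lambda$ lands in the set of reduced Schnyder decompositions: given an even Schnyder decomposition $(F_1,\ldots,F_{2p})$, the forests $F_i'=F_{2i}$ clearly cover each internal edge the correct number of times. Indeed, by Lemma~\ref{lem:characterization-even-Schnyder} each internal edge has as many even as odd colors, so it lies in exactly $p-1$ of the even-colored forests, giving Property~(i'). Property~(ii') follows immediately from Property~(ii) of Schnyder decompositions specialized to even colors, since $F_{2i}$ avoids exactly $u_{2i},u_{2i+1}$. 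The crossing rule (iii') is the restriction of Definition~\ref{def:Schnyder}(iii) to even colors, combined with the parity refinement from Lemma~\ref{lem:characterization-even-Schnyder}: the key point is that around a black vertex the outgoing edges of colors $2i-1$ and $2i$ coincide, and around a white vertex those of colors $2i$ and $2i+1$ coincide, which is exactly why the incoming-edge constraint differs between black and white vertices in~(iii').

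The core of the proof is injectivity together with surjectivity, which I would handle simultaneously by building an explicit inverse $\Lambda^{-1}$. The reconstruction rule is dictated by the second statement of Lemma~\ref{lem:characterization-even-Schnyder}: the odd forests are determined by the even ones via the parity coincidence of outgoing edges. Concretely, for each internal vertex $v$ one recovers the outgoing arc of every odd color by setting, around a black $v$, the outgoing edge of color $2i-1$ equal to that of color $2i$ (i.e.\ of $F_i'$), and around a white $v$, the outgoing edge of color $2i+1$ equal to that of color $2i$. I would then verify that the $2p$-tuple $(F_1,\ldots,F_{2p})$ obtained by interleaving the given even forests with these reconstructed odd forests genuinely satisfies all three conditions of Definition~\ref{def:Schnyder} and is moreover even. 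Injectivity of $\Lambda$ is then automatic because the even forests are literally a subfamily of the full decomposition; surjectivity amounts to checking that this reconstruction always produces a valid even Schnyder decomposition.

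For the verification step I would argue as follows. Since each internal vertex has a well-defined outgoing edge in every color by construction, and these outgoing edges around $v$ are forced to appear in clockwise order $1,2,\ldots,2p$ (the even ones inherited from~(iii') of the reduced decomposition, the odd ones inserted alongside their even partner), Property~(iii) of Schnyder decompositions follows, with the incoming-arc constraint handled by the black/white case distinction in~(iii'). The coverage Property~(i) follows by reassembling the colors: an internal edge carries $p-1$ even colors by~(i'), and the reconstruction assigns it exactly $p-1$ odd colors in matching directions, totalling $d-2$. The forest structure and rooting Property~(ii) follows because each reconstructed odd forest $F_{2i-1}$ (resp.\ $F_{2i+1}$) uses, at each relevant vertex, the same outgoing edges as an already-known forest $F_{2i}$, so it inherits acyclicity and the correct set of roots.

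The main obstacle I expect is proving that the reconstructed odd subgraphs are genuinely \emph{forests oriented toward the external vertices}, rather than containing directed cycles, and that they carry the correct missing colors so that evenness holds edge-by-edge. The coincidence of outgoing edges pins down the orientation locally, but one must check globally that no monochromatic cycle is created; I would address this by transporting the problem through the bijections of Section~\ref{sec:equiv-definitions}, reconstructing instead the even clockwise labelling (where, by Lemma~\ref{lem:characterization-even-labellings}, the corner colors around black and white vertices have fixed parity) and recovering the full labelling uniquely. Passing through the clockwise-labelling/$d/(d-2)$-orientation side makes the forest property automatic via the already-proved Theorem~\ref{thm:equiv}, so the cleanest route is arguably to define $\Lambda^{-1}$ at the level of the parity-refined colors and invoke the established equivalences rather than re-verifying the Schnyder axioms by hand.
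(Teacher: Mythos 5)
Your setup matches the paper's proof almost exactly: the well-definedness of $\Lambda$ via Lemma~\ref{lem:characterization-even-Schnyder}, the reconstruction rule for the odd forests (color $2i-1$ on the parent edge in $F_{2i}$ at black vertices, color $2i+1$ at white vertices), and injectivity as an immediate consequence are all as in the paper. You also correctly isolate the one non-trivial point, namely that the reconstructed odd subgraphs $F_{2i\pm 1}$ are genuinely acyclic forests rooted at the external vertices.

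However, your treatment of that point is where the gap lies. You propose to make the forest property ``automatic'' by passing to the clockwise-labelling/orientation side and invoking Theorem~\ref{thm:equiv}, but that theorem does not do the work you need. Forgetting colors in the interleaved candidate does yield an even $d/(d-2)$-orientation, and Theorem~\ref{thm:equiv} does associate to it a genuine even Schnyder decomposition $S'$ whose subgraphs are forests --- but nothing in the statement of Theorem~\ref{thm:equiv} identifies your candidate arc-coloring with $S'$. That identification is exactly the agreement-propagation argument buried in the proof that $\Phi$ is surjective (or, in your labelling formulation, the verification that the corner coloring you reconstruct is globally consistent, i.e.\ satisfies Property~(i) of clockwise labellings around every face, which is not a consequence of the purely local vertex conditions (iii')). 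So the detour does not avoid a global argument; it merely relocates it, and you have not supplied it. The paper instead settles the point directly and briefly: assuming a simple clockwise directed cycle $C$ of color $2i-1$, it follows the outgoing edges of color $2i$ from vertices of $C$, shows by the clockwise order of colors that this walk stays on or inside $C$, and thereby produces a directed cycle of color $2i$, contradicting the hypothesis that $F_{2i}=F_i'$ is a forest (the counterclockwise case uses color $2i-2$). You should either reproduce such a cycle-chasing argument or explicitly carry out the propagation/consistency step you are gesturing at; as written, the surjectivity half of the proof is incomplete.
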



\fig{width=\linewidth}{hexangulation}{(a) An even clockwise labelling. (b) A reduced Schnyder decomposition. (c) Property (iii') of reduced Schnyder decompositions.}

As mentioned in the introduction, the case $p=2$ of even Schnyder decompositions already appeared in many places in the literature. Adding to $F_1'$ the external edges
$\{v_4,v_1\}$ and $\{v_1,v_2\}$, and adding $\{v_2,v_3\}$
and $\{v_3,v_4\}$ to $F_2'$, one obtains a pair of non-crossing spanning trees.
Such pairs of trees on quadrangulations
 were first studied in~\cite{Fraysseix:Topological-aspect-orientations}
 and a bijective survey on related structures
appeared recently~\cite{Felsner:Baxter-family}.


\begin{proof}
We first show that if $(F_1',\ldots,F_p')=\Lambda(F_1,\ldots,F_{2p})$ (i.e., $F_i'=F_{2i}$), then Properties (i'), (ii') and (iii') are satisfied.
Properties (i') and (ii') are obvious from Properties (i) and (ii)
of Schnyder decompositions. For Property (iii') we first consider the situation around a black internal vertex $v$. By Lemma~\ref{lem:characterization-even-Schnyder} the edges $e_{2i-1}$ and $e_{2i}=e_i'$ leading $v$ to its parent in $F_{2i-1}$ and $F_{2i}$ respectively are equal, hence Property (iii) of Schnyder
decompositions immediately implies Property (iii') for black vertices. The proof  for white vertices is similar.

We now prove that $\Lambda$ is bijective. Injectivity is clear since Lemma~\ref{lem:characterization-even-Schnyder} ensures that the forests of odd colors
 $(F_1,\ldots,F_{2p})$ can be recovered from the even ones:
\emph{starting from $(F_1',\ldots,F_p')$, one defines $F_{2i}=F_i'$ and then, for all black (resp. white) internal vertex $v$, one gives color $2i-1$ (resp. $2i+1$) to the arc leading from $v$ to its parent in the forest $F_{2i}$}.
To prove surjectivity we must show that applying the emphasized rule to a reduced 
Schnyder decomposition 
 $(F_1',\ldots,F_p')$ always produces an even Schnyder decomposition.
Properties (i), (ii) and (iii) of Schnyder decompositions clearly hold, as well as the characterization of even Schnyder decompositions given in Lemma~\ref{lem:characterization-even-Schnyder}.
The only non-trivial point is to prove that the subgraphs $F_{2i-1},i\in [p]$ are forests oriented toward the external vertices.
If, for $i\in[p]$ the subgraph $F_{2i-1}$ is not a forest, then there is a simple directed cycle $C$ of color $2i-1$ with only internal vertices (since each internal vertex has exactly one outgoing edge of color $2i-1$). If $C$ is directed clockwise, we consider a vertex $v_0$ on $C$ and the edge of color $2i$ going out of $v$. The order of colors in clockwise order around vertices implies that the other end $v_1$ of this edge is either on $C$ or inside $C$. Hence, $v_1$ is an internal vertex and we can consider the edge of color $2i$ going out of $v_1$.
Again the order of colors around vertices implies that the other end $v_2$ of this edge is either on $C$ or inside $C$. Hence, continuing the process we find a directed cycle of color $2i$, which contradicts the fact that $F_{2i}=F_{i}'$ is a forest. Similarly, if we assume that the cycle $C$ is counterclockwise, we obtain a cycle of color $2i-2$ and reach a contradiction. This shows that there is no directed cycle in the subgraphs $F_{2i-1},i\in [p]$, hence that they are forests oriented toward external vertices. Thus,  $(F_1,\ldots,F_{2p})$ is an even Schnyder decomposition.
\end{proof}


\subsection{Duality on even Schnyder decompositions}~\\
Consider a  $2p$-regular graph $G^*$ of mincut $2p$ rooted at a vertex $v^*$. 
The faces of $G^*$ are said to be \emph{black} or \emph{white} respectively if they are the dual of black or white vertices of the primal graph $G$.
Call \emph{even} the regular decompositions of $G^*$ that are dual
to even Schnyder decompositions.
We first characterize even regular decompositions.

\begin{lem}\label{lem:characterization-even-dual-Schnyder}
A regular decomposition $(T_1^*,\ldots,T_{2p}^*)$ of $G^*$ is even if and only if the two colors of every non-root edge have different parity. Equivalently, the spanning trees $T_2^*,T_4^*,\ldots,T_{2p}^*$ form a partition of the edges of $G^*$ distinct from the root-edges $e_1^*,\ldots,e_{2p-1}^*$ (while $T_1^*,T_3^*,\ldots,T_{2p-1}^*$ form a partition of the edges of $G^*$ distinct from $e_2^*,\ldots,e_{2p}^*$). Moreover, in this case, the arcs having an even (resp. odd) color have a black (resp. white) face on their right.
\end{lem}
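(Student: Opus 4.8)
The plan is to read off almost everything from the duality dictionary already established in Section~\ref{sec:duality}, so that only the final ``moreover'' assertion requires any genuinely new (and still routine) orientation bookkeeping.

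First I would prove the opening equivalence by transporting the primal characterisation through $\chi$. By definition an even regular decomposition $R=(T_1^*,\ldots,T_{2p}^*)$ is the image under $\chi$ of an even Schnyder decomposition $S=(F_1,\ldots,F_{2p})$, and the non-root edges of $G^*$ are precisely the duals of the internal edges of $G$. By Lemma~\ref{lem:alternative_def} the two colors carried by a non-root edge $e^*$ of $G^*$ in $R$ are exactly the two missing colors of the primal internal edge $e$ in $S$. Hence the statement ``the two colors of every non-root edge of $G^*$ have different parity'' is word-for-word the statement ``the two missing colors of every internal edge of $G$ have different parity'', which by Lemma~\ref{lem:characterization-even-Schnyder} characterises evenness of $S$, hence of $R$. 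This gives the first ``if and only if''.

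Next I would derive the reformulation in terms of the even-colored trees, using only Definition~\ref{def:regular_dec}. There, each non-root edge lies in exactly two trees (in opposite directions) and each root-edge $e_i^*$ lies in $T_i^*$ alone. If the two colors of every non-root edge have different parity, then every non-root edge lies in exactly one even tree; moreover no non-root edge lies in two even trees (it would then have two even colors), so the even trees are edge-disjoint on non-root edges. Adding the even root-edges $e_2^*,\ldots,e_{2p}^*$ (each lying in its own even tree $T_{2i}^*$) and observing that no odd root-edge lies in an even tree, we conclude that $T_2^*,T_4^*,\ldots,T_{2p}^*$ partition exactly $E(G^*)$ minus the odd root-edges $e_1^*,e_3^*,\ldots,e_{2p-1}^*$. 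Conversely, if these trees partition that edge set, each non-root edge lies in exactly one even tree, hence—being in two trees total—also in exactly one odd tree, so its two colors differ in parity. The parallel statement for the odd trees is symmetric.

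Finally, for the ``moreover'' assertion I would read off the color of an arc from the corner colors via the map $\xi$. By construction the arc $(u,e)$ of $G^*$ lies in $T_i^*$, where $i$ is the color of the corner preceding $e$ in clockwise order around $u$; with the conventions of Section~\ref{sec:duality} this corner sits on one fixed side of the directed arc and is incident to the face of $G^*$ on that side. In the even case, a corner of $G^*$ incident to a black (resp. white) face is dual to a corner of $G$ at a black (resp. white) vertex, so by Lemma~\ref{lem:characterization-even-labellings} it has odd (resp. even) color. Thus an arc of even color has a white face on the side of its preceding corner; and since $G$ is bipartite, the two faces of $G^*$ meeting along $e^*$ carry opposite colors, so the face on the right is black. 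The odd case is identical with the colors swapped. The only delicate point in the whole argument is precisely this last bookkeeping—keeping straight on which side of the directed arc the preceding corner lies, and hence that \emph{even} corresponds to \emph{black on the right} rather than the reverse—but this is a single check against the conventions fixed in Section~\ref{sec:duality} and illustrated in Figure~\ref{fig:dual-Schnyder}.
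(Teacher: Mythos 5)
Your proof is correct, and for the equivalence it is exactly the paper's argument (which the paper compresses to ``obvious from Lemma~\ref{lem:characterization-even-Schnyder}''): transport through $\chi$ via Lemma~\ref{lem:alternative_def}, then unwind the partition reformulation directly from Definition~\ref{def:regular_dec}. For the ``moreover'' clause you take a slightly different but equivalent route: the paper stays on the primal side, noting that at a black vertex $v$ of $G$ the colors of an outgoing arc form a block $2i-1,2i,\ldots,2j-1,2j$ (Lemma~\ref{lem:characterization-even-Schnyder}) and then reading off, via the orientation rule of Lemma~\ref{lem:alternative_def} (Figure~\ref{fig:dual-Schnyder}(c)), that the even-colored arc of $e^*$ has the black face dual to $v$ on its right; you instead read the arc color off the regular labelling via $\xi$ together with Lemma~\ref{lem:characterization-even-labellings}. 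Both reduce to the same single convention check, and your resolution of it is right: the corner preceding $e$ in clockwise order around $u$ lies on the left of the arc $(u,e)$, so an even-colored arc has an even-colored corner, hence a white face, on its left, and by bipartiteness of $G$ the two faces along $e^*$ have opposite colors, giving a black face on the right.
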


\begin{proof}
The first part of Lemma~\ref{lem:characterization-even-dual-Schnyder} is obvious from Lemma~\ref{lem:characterization-even-Schnyder}. To prove that the arcs of even color have a black face on their right, we consider a black vertex $v$ of the primal graph $G$ and an incident arc $a=(v,e)$. By Lemma~\ref{lem:characterization-even-Schnyder}, the  colors of $a$ are of the form $2i-1,2i,\ldots, 2j-1,2j$ for certain integers $i,j\in[p]$. Therefore, by Lemma~\ref{lem:alternative_def},  the arc of the dual edge $e^*$ having the even color (i.e., 
color $2i$) has the black face of $G^*$ corresponding to $v$ on its right.
\end{proof}

We denote by $\Lambda^*$ the mapping which associates to an even regular decomposition $(T_1^*,\ldots,T_{2p}^*)$ the subsequence
$({T_1'}^*,\ldots,{T_{p}'}^*)$ of trees of even color,
${T_i'}^*:=T_{2i}^*$ for all $i$ in $[p]$.

\begin{Def}
A \emph{reduced regular decomposition} of~$G^*$ is a
partition of the edges of $G^*$ distinct from the root-edges $e_1^*,e_3^*\ldots,e_{2p-1}^*$
into $p$ spanning trees ${T_1'}^*,\ldots,{T_p'}^*$
(one tree for each color $i\in [p]$) oriented toward $v^*$ such that
\begin{enumerate}
\item[(i')] Every arc in the oriented trees  ${T_1'}^*,\ldots,{T_p'}^*$ has a black face on its right.
\item[(ii')] The only root-edge in  ${T_i'}^*$ is $e_{2i}^*$.
\item[(iii')] Around any internal vertex $v$, the edges $e_1',\ldots,e_p'$ leading $v$ to its parent in ${T_1'}^*,\ldots,{T_p'}^*$ appear in clockwise order around $v$.
\end{enumerate}
We denote by $({T_1'}^*,\ldots,{T_p'}^*)$ the decomposition.
\end{Def}

\begin{thm}\label{thm:compact-even-dual-Schnyder}
The mapping $\Lambda^*$ establishes a bijection between the even regular decompositions and the reduced regular decompositions of $G^*$.
\end{thm}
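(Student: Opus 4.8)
The statement is the exact dual of Theorem~\ref{thm:compact-even-Schnyder}, so the plan is to dualize the three-part structure (well-definedness, injectivity, surjectivity) of that proof, importing the hard step through the duality machinery already available. The connecting observation is the following. Writing $\chi$ for the bijection of Theorem~\ref{thm:equiv-dual} (which, by the very definition of even regular decompositions, restricts to a bijection between even Schnyder decompositions and even regular decompositions) and $\Lambda$ for the bijection of Theorem~\ref{thm:compact-even-Schnyder}, Lemma~\ref{lem:alternative_def} tells us that $T_{2i}^*$ is the complemented dual of the spanning tree obtained from $F_{2i}$ by adding the external edges except $\{v_{2i},v_{2i+1}\}$. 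Hence $\Lambda^*\circ\chi$ depends only on the even forests $F_2,\dots,F_{2p}$, i.e.\ only on $\Lambda$, so that $\Lambda^*\circ\chi=\chi'\circ\Lambda$ for the ``reduced complemented-dual'' map $\chi'$; since $\chi$ and $\Lambda$ are bijections this confirms that a direct dualized proof that $\Lambda^*$ is a bijection is legitimate, which is the route I would take.

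For well-definedness I would check that $\Lambda^*(T_1^*,\dots,T_{2p}^*)=(T_2^*,\dots,T_{2p}^*)$ satisfies Properties (i'), (ii'), (iii') of reduced regular decompositions. Property (i') and the fact that the even-colored trees partition all edges except the odd root-edges $e_1^*,e_3^*,\dots,e_{2p-1}^*$ are precisely the content of Lemma~\ref{lem:characterization-even-dual-Schnyder}. Property (ii') follows from Property (ii) of regular decompositions (each root-edge $e_j^*$ lies only in $T_j^*$), and Property (iii') follows from Property (iii) by restricting the clockwise list of outgoing (parent) edges at a vertex to the even colors.

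Injectivity rests on a local recovery rule reconstructing the odd-colored trees from the even ones, dual to the rule used for $\Lambda$. Around a non-root vertex $u$ the $2p$ outgoing arcs carry the colors $1,\dots,2p$ in clockwise order along the incident edges, so the odd outgoing arcs interleave the even ones and the odd arc of color $2i-1$ lies on the edge sitting between the even outgoing edges of colors $2i-2$ and $2i$. The one delicate point is that this edge is \emph{unique}: since $G^*$ is the dual of a bipartite graph, the faces around $u$ alternate black and white, and Property (i') forces every parent (even outgoing) edge to carry a white face on its left and a black face on its right. A short parity count of the faces in each of the $p$ gaps between consecutive parent edges then forces each gap to contain exactly one child edge, so the recovery rule is well defined and the even trees determine the odd ones; hence $\Lambda^*$ is injective.

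Surjectivity is where I expect the main difficulty. Applying the recovery rule to an arbitrary reduced regular decomposition reproduces Properties (i), (ii), (iii) of regular decompositions together with the evenness characterization of Lemma~\ref{lem:characterization-even-dual-Schnyder}; the only nontrivial point is that each recovered odd subgraph $T_{2i-1}^*$ is acyclic, hence a spanning tree oriented toward $v^*$. Here I would dualize the cycle-chasing argument of Theorem~\ref{thm:compact-even-Schnyder}: a directed cycle of color $2i-1$ would, by following the outgoing edges of color $2i$ (in the clockwise case) or of color $2i-2$ (in the counterclockwise case) and using the clockwise order of colors around vertices to remain on the correct side, produce a directed cycle of an even color, contradicting that $T_{2i}^*$ (resp.\ $T_{2i-2}^*$) is a tree. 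Ruling out such cycles shows the recovered $(T_1^*,\dots,T_{2p}^*)$ is a genuine even regular decomposition mapping onto the given reduced one, which completes the proof that $\Lambda^*$ is a bijection.
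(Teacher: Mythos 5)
Your overall route coincides with the paper's: well-definedness via Lemma~\ref{lem:characterization-even-dual-Schnyder}, injectivity via the local recovery rule (made legitimate by the alternation of incoming and outgoing arcs around each non-root vertex, which is exactly the paper's use of Property~(i') together with the bipartite face $2$-colouring), and surjectivity via the dualized cycle-chasing argument producing an even-coloured cycle from a hypothetical odd-coloured one. Those parts are fine.

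There is, however, one genuine gap in your surjectivity step. You assert that acyclicity of the recovered subgraphs $T_{2i-1}^*$ is ``the only nontrivial point,'' but Property~(ii) of regular decompositions requires specifically that the root-edge $e_{2i-1}^*$ lie in $T_{2i-1}^*$, and this does not follow from acyclicity or from the local recovery rule. Each odd root-edge $e_{2j-1}^*$ is covered by no even tree, so at its non-root endpoint it is one of the $p$ ``child'' edges and receives exactly one odd colour --- namely $2k-1$ where $e_{2j-1}^*$ immediately precedes the parent edge of colour $2k$ around that vertex --- and there is no local reason why $k=j$. Since the only arcs pointing into $v^*$ with an odd colour are those on odd root-edges, a wrong assignment would leave some $T_{2i-1}^*$ arriving at $v^*$ through the wrong root-edge, violating Definition~\ref{def:regular_dec}(ii). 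The paper closes this with a separate global argument: assuming $e_{2i-1}^*\notin T_{2i-1}^*$, it forms the cycle $C$ consisting of $e_{2i-1}^*$ together with the colour-$2i$ path $P$ from $v_{2i-1}^*$ to $v^*$ (which passes through $v_{2i}^*$ because $e_{2i}^*$ is the only root-edge of ${T_i'}^*$), and then shows that the colour-$(2i-1)$ path from $v_{2i}^*$ is trapped inside $C$ and hence must exit through $e_{2i-1}^*$, a contradiction. You would need to add this (or an equivalent) argument to complete the proof.
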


\begin{proof} It is clear from Lemma~\ref{lem:characterization-even-dual-Schnyder} that the image $({T_1'}^*,\ldots,{T_{p}'}^*)$  of any even regular decomposition by the mapping $\Lambda^*$ satisfies (i'), (ii'), (iii'). Moreover the mapping $\Lambda^*$ is injective since the odd colors can be recovered from the even ones: \emph{starting from $({T_1'}^*,\ldots,{T_p'}^*)$, ones defines $T_{2i}^*={T_i'}^*$ and then around each vertex $v\neq v^*$ one gives color $2i-1$ to the arc going out of $v$ preceding the arc of color $2i$ going out of $v$.} In order to show that $\Lambda^*$ is surjective, we must show that applying the emphasized rule to $({T_1'}^*,\ldots,{T_p'}^*)$ satisfying (i'), (ii'), (iii') always produces an even regular  decomposition. 
Clearly, Property (i') implies that the incoming and outgoing arcs of $({T_1'}^*,\ldots,{T_p'}^*)$ alternate around any non-root vertex. Thus the coloring $(T_1^*,\ldots,T_{2p}^*)$ obtained is such that every non-root edge has two colors of different parity in opposite direction,
and such that the arc colors in clockwise order around a non-root vertex are $(1,2,\ldots,2p)$. 
Hence, to show that $(T_1^*,\ldots,T_{2p}^*)$ is an even regular decomposition,  
it remains to show that each oriented subgraph $T_{2i-1}^*,i\in [p]$ is a spanning tree oriented toward $v^*$ and contains the root-edge $e_{2i-1}^*$. Suppose that the subgraph $T_{2i-1}^*$ is not a tree. In this case, there is a simple directed cycle $C$ of color $i$ with only non-root vertices (since each non-root vertex has exactly one outgoing edge of color $2i-1$). If $C$ is directed clockwise, we consider a vertex $v_0$ on $C$ and the arc of color $2i$ going out of $C$.
  Since colors are consecutive in clockwise order around non-root vertices, the end $v_1$ of this arc is either on $C$ or inside $C$. Hence, $v_1$ is a non-root vertex and we can consider the arc of color $2i$ going out of $v_1$. Again, since the colors are consecutive in clockwise order
   around non-root vertices, the end $v_2$ of this arc is either on $C$ or inside $C$. Hence, continuing the process we find a directed cycle of color $2i$, which contradicts the fact that $T_{2i}^*={T_i'}^*$ is a tree. Similarly, if the cycle $C$ is counterclockwise, we obtain a cycle of color $2i-2$ and reach a contradiction. Thus the subgraphs $T_{2i-1}^*,i\in [p]$ are spanning trees oriented toward $v^*$. Lastly, we must show that the root-edge $e^*_{2i-1}$ is in the tree $T_{2i-1}^*$. Suppose it is not in $T_{2i-1}^*$.
   The directed path $P$ of color $2i$ from $v_{2i-1}^*$ to $v^*$ goes through $v_{2i}^*$ (since it uses $e_{2i}^*$). We consider the cycle $C$ made of $P$ together with the root-edge $e_{2i-1}^*$. Since colors are consecutive in clockwise order around non-root vertices, the path $P'$ of color $2i-1$ from  $v_{2i}^*$ to $v^*$ starts and stays inside $C$ (its edges are either part of $C$ or inside $C$). Thus, $P'$
   must use the root-edge $e_{2i-1}^*$ to reach $v^*$. Hence, $e_{2i-1}^*$ is in $T_{2i-1}^*$.
This completes the proof that  $(T_1^*,\ldots,T_{2p}^*)$ is an even regular decomposition and that $\Lambda^*$ is a bijection.
\end{proof}

\bigskip


\section{Orthogonal and straight-line drawing of 4-regular plane graphs}\label{sec:drawing}
A \emph{straight-line drawing} of a (planar) graph is a planar drawing where each edge is drawn as a segment. An \emph{orthogonal drawing} is a planar drawing where each edge is represented as a sequence of horizontal and vertical segments. We present and analyze an algorithm for obtaining (in linear time) straight-line and orthogonal drawings of $4$-regular plane graphs of mincut $4$.

In all this section, $G$ denotes a $4$-regular plane graph of mincut $4$ rooted at a vertex $v^*$
(i.e., the dual of $G$ is a quadrangulation without multiple edges), having $n$ vertices,  hence $2n$ edges and $n+2$ faces by the Euler relation.
As in previous sections, we denote by $e_1^*,e_2^*,e_3^*,e_4^*$  and call \emph{root-edges} the edges incident to $v^*$ in counterclockwise order, and we denote by $v_1^*,v_2^*,v_3^*,v_4^*$ the other end of these edges (these vertices are not necessarily distinct). We call \emph{root-faces} the 4 faces incident to $v^*$ and \emph{non-root faces} the $n-2$  other ones.  As shown in the previous section, there exists an even regular  decomposition $(T_1^*,\ldots,T_4^*)$ of~$G$ (with $e_i^*\in T_i^*$) and we now work with this decomposition.
 The spanning trees  $(T_1^*,\ldots,T_4^*)$ satisfy the properties of regular decompositions (Definition~\ref{def:regular_dec}) and the additional property of even Schnyder decomposition given by Lemma~\ref{lem:characterization-even-dual-Schnyder}.
An even regular decomposition $(T_1^*,\ldots,T_4^*)$ is shown in Figure~\ref{fig:tetravalent} (left).\\

\fig{width=\linewidth}{tetravalent}{Left: an even regular decomposition of a 4-regular plane graph $G$ (the spanning trees $T_1^*,T_3^*$ are represented in the upper part and the trees $T_2^*,T_4^*$ are represented in the lower part). Right: the orthogonal drawing of $G$.}

\subsection{Planar drawings using face-counting operations}~\\
For a vertex $v\neq v^*$ and a color $i$ in $\{1,2,3,4\}$, we denote by $P_i(v)$ the directed path of color $i$ from $v$ to $v^*$.
We first establish two easy lemmas about these paths.
\begin{lem}\label{lem:X}
Let $v$ be a non-root vertex and let $i$ be  in $\{1,2,3,4\}$. For any vertex $u\neq v,v^*$ on the path $P_i(v)$, the two arcs going out of $u$ that are not on $P_i(v)$ are on the same side of $P_i(v)$. Moreover, if they are on the left side (resp. right side) of $P_i$, these arcs have color $i+2$ and $i+3$ (resp. $i+1$ and $i+2$). 
\end{lem}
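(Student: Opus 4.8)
The plan is to work entirely within the regular decomposition framework, using Property~(iii) of regular decompositions (the clockwise order of the outgoing edges $e_1,\ldots,e_d$ around each non-root vertex) together with the even structure recorded in Lemma~\ref{lem:characterization-even-dual-Schnyder}. The key fact I would exploit is that around each non-root vertex $u$, the four outgoing arcs have colors $1,2,3,4$ in clockwise order, one on each of the four edges incident to $u$ (here $d=4$ and the graph is $4$-regular, so every edge at $u$ carries exactly one outgoing arc). Thus the arc going out of $u$ along $P_i(v)$ has color $i$, and the three remaining arcs at $u$ have colors $i+1,i+2,i+3$ in clockwise order starting just after the color-$i$ arc.

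First I would fix the vertex $u\neq v,v^*$ on $P_i(v)$ and note that $P_i(v)$ enters $u$ along an incoming arc and leaves $u$ along the outgoing arc of color $i$. The incoming arc that $P_i(v)$ uses is the edge on which $u$'s predecessor sits; by Property~(i) of regular decompositions this edge carries the color $i$ in the direction into $u$, so by the opposite-direction rule (Property~(i) of Definition~\ref{def:regular_dec}) it carries some color in the direction out of $u$. The crucial point is to locate this incoming edge relative to the color-$i$ outgoing edge in the clockwise cyclic order at $u$. Since the colors of the outgoing arcs are $i,i+1,i+2,i+3$ in clockwise order occupying the four distinct edges, the incoming edge of $P_i(v)$ must coincide with one of the three edges bearing outgoing colors $i+1,i+2,i+3$. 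This incoming edge, together with the outgoing color-$i$ edge, splits the cyclic arrangement at $u$ into a ``left side'' and a ``right side'' of the path $P_i(v)$, and the two arcs not on $P_i(v)$ are precisely the two remaining outgoing edges.

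The main step is then a parity/orientation argument to show that these two remaining outgoing arcs lie on the \emph{same} side of $P_i(v)$, which is where I expect the only real subtlety. The idea is that the incoming edge of $P_i(v)$ carries, in the outgoing direction from $u$, the color $i+2$: this follows from the even structure, since $u$ is a non-root vertex and the two colors on any non-root edge differ by exactly $2$ in the even case (the color $i$ entering $u$ pairs with the color $i+2=i-2$ leaving $u$, using $[d]=\{1,2,3,4\}$ modulo $4$). Granting that the incoming edge of $P_i(v)$ bears outgoing color $i+2$, the color-$i$ and color-$(i+2)$ outgoing edges are the two edges occupied by the path, and they are separated in the clockwise order by exactly one edge on each side: the color-$(i+1)$ edge on one side and the color-$(i+3)$ edge on the other. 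Reading the clockwise order $i,i+1,i+2,i+3$, the edge of color $i+1$ lies clockwise between the color-$i$ and color-$(i+2)$ edges, hence on the right of $P_i(v)$ as it traverses $u$, while $i+3$ lies on the left; I would then check the stated color assignment directly against this cyclic picture, distinguishing the ``left side'' case (colors $i+2,i+3$) from the ``right side'' case (colors $i+1,i+2$).

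The hard part will be pinning down rigorously that the incoming edge of $P_i(v)$ at $u$ carries outgoing color $i+2$ (as opposed to $i+1$ or $i+3$), since this is exactly what forces the two off-path outgoing arcs to land on the same side rather than straddling $P_i(v)$. I would establish this from Lemma~\ref{lem:characterization-even-dual-Schnyder}: in an even regular decomposition the two colors of each non-root edge have opposite parity and differ by $2$ modulo $4$, so the edge entering $u$ with color $i$ necessarily leaves $u$ with color $i+2$. Once this is in hand, the clockwise-order bookkeeping for the remaining two edges is a short finite case check over the residues modulo $4$, yielding the two cases of the statement; I would present just enough of that check to make the side-and-color correspondence transparent rather than grinding through all four values of~$i$.
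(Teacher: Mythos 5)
Your overall framing matches the paper's one-line proof (parity of edge colors from Lemma~\ref{lem:characterization-even-dual-Schnyder} plus the clockwise order $1,2,3,4$ of outgoing colors around $u$), but your key step is wrong, and wrong in a way that would refute the lemma rather than prove it. You claim that the edge along which $P_i(v)$ enters $u$ carries outgoing color $i+2$, justified by saying that in an even regular decomposition the two colors of a non-root edge ``have opposite parity and differ by $2$ modulo $4$.'' These two conditions contradict each other: $i$ and $i+2$ have the \emph{same} parity modulo $4$. What Lemma~\ref{lem:characterization-even-dual-Schnyder} actually gives is that the two colors of a non-root edge have \emph{different} parity, so the entering edge bears outgoing color $i+1$ or $i+3$ --- never $i+2$.

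This is not a cosmetic slip, because the whole conclusion hinges on it. If the entering edge had outgoing color $i+2$, the path would occupy the color-$i$ and color-$(i+2)$ edges at $u$, and the two remaining outgoing arcs, of colors $i+1$ and $i+3$, would lie on \emph{opposite} sides of the path --- exactly the configuration the lemma excludes. Your own final bookkeeping computes precisely this ($i+1$ on the right, $i+3$ on the left) without noticing that it is incompatible with the statement being proved, whose two cases are $\{i+2,i+3\}$ on the left or $\{i+1,i+2\}$ on the right (and in your picture the off-path colors would be $\{i+1,i+3\}$, matching neither). The correct argument runs: since the entering edge has outgoing color $i+1$ or $i+3$, the path occupies two cyclically adjacent edges among the four at $u$; the other two edges therefore sit consecutively in the single nonempty clockwise gap between the exit edge and the entry edge, hence on the same side of the path, and reading off which gap they fill gives colors $i+2,i+3$ when that gap is on the left and $i+1,i+2$ when it is on the right.
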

\begin{proof} 
The first assertion comes from the fact that the two colors of any non-root edge of $G^*$ have different parities (and the colors of the arcs out of $u$ are 1,2,3,4  in clockwise order). The second assertion is then obvious.
\end{proof}

\begin{lem}\label{lem:no-intersection}
For all $i$ in $\{1,2,3,4\}$ the paths $P_i(v)$ and $P_{i+2}(v)$ only intersect at $v$ and $v^*$. Equivalently, $P_i(v)\cup P_{i+2}(v)$ is a simple cycle.
\end{lem}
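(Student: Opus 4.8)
The plan is to show that the two directed paths $P_i(v)$ and $P_{i+2}(v)$ from $v$ to the root-vertex $v^*$ share no vertex other than their common endpoints $v$ and $v^*$. First I would record the structural facts already available: by Lemma~\ref{lem:X}, at any intermediate vertex $u$ of $P_i(v)$ the two outgoing arcs not belonging to $P_i(v)$ lie strictly on one side of the path, and their colors are determined by that side (colors $i{+}2,i{+}3$ on the left, colors $i{+}1,i{+}2$ on the right). The key consequence to extract is that the outgoing arc of color $i{+}2$ at such a vertex $u$ always points strictly into one fixed side of $P_i(v)$, and in particular never lies along $P_i(v)$ itself. So if $P_{i+2}(v)$ were to meet $P_i(v)$ at some intermediate vertex $u$, the two paths would locally \emph{cross} rather than merge, which is the tension I want to exploit.

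The cleanest route is a crossing/non-crossing argument based on Remark~\ref{rk:dual}: a directed path of color $i{+}1$ cannot cross a directed path of color $i$ from right to left. I would argue as follows. Suppose for contradiction that $P_i(v)$ and $P_{i+2}(v)$ meet at some vertex $u \neq v,v^*$. Consider the first such vertex $u$ encountered along $P_i(v)$ starting from $v$. Then the initial segments of $P_i(v)$ and $P_{i+2}(v)$ from $v$ to $u$ form a closed region, and by Lemma~\ref{lem:X} the two paths leave $v$ in prescribed cyclic positions (the outgoing arcs of colors $i, i{+}1, i{+}2, i{+}3$ are in clockwise order around $v$), so $P_{i+2}(v)$ departs $v$ two slots away from $P_i(v)$ and the two paths bound a genuine disk with nonempty interior. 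At $u$, the color-$i{+}2$ arc arriving along $P_{i+2}(v)$ and the color-$i$ arc along $P_i(v)$ impose, via Lemma~\ref{lem:X} applied at $u$, a forced local configuration that violates the clockwise color order, i.e.\ forces a right-to-left crossing of a color-$i{+}1$ path across a color-$i$ path (or the analogous statement for the relevant adjacent colors), contradicting Remark~\ref{rk:dual}.

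An alternative and perhaps more robust formulation, which I would keep in reserve, is a counting argument on the bounded region $R$ enclosed by $P_i(v) \cup P_{i+2}(v)$ if the two paths did share an intermediate vertex: one uses the even-regular-decomposition structure (Lemma~\ref{lem:characterization-even-dual-Schnyder}, so that odd and even colors alternate around each vertex and each arc has a prescribed colored face on its right) to track how the color-$(i{+}1)$ tree $T^*_{i+1}$ must exit $R$, and derives a contradiction with the fact that $T^*_{i+1}$ is a tree oriented toward $v^*$ whose root-edge $e^*_{i+1}$ sits outside $R$. Concretely, every non-root vertex strictly inside $R$ has its color-$(i{+}1)$ outgoing arc, and following these arcs one obtains a directed color-$(i{+}1)$ path that is trapped inside $R$ and cannot reach $v^*$, producing a monochromatic cycle and contradicting that $T^*_{i+1}$ is a forest.

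The main obstacle I anticipate is handling the \emph{boundary} of the argument cleanly: making the notion ``$P_{i+2}(v)$ crosses $P_i(v)$ from a forbidden side'' fully rigorous at a shared vertex $u$ requires carefully reading off, from Lemma~\ref{lem:X} and the clockwise order of colors $1,2,3,4$ around $u$, which incoming and outgoing arcs of the two paths occupy which corners, and then matching this against the precise direction of the forbidden crossing in Remark~\ref{rk:dual}. A secondary subtlety is the degenerate possibility that $P_i(v)$ and $P_{i+2}(v)$ touch at $u$ \emph{without} crossing (sharing a vertex but immediately separating again); I would rule this out by noting that at $u$ the color-$i$ and color-$i{+}2$ outgoing arcs are separated by exactly the color-$i{+}1$ arc in clockwise order, so the two paths cannot re-separate on the same side, forcing a genuine crossing and hence the contradiction. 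Once the no-crossing fact is pinned down, the conclusion that $P_i(v)\cup P_{i+2}(v)$ is a simple cycle is immediate, since the two arcs leaving $v$ are distinct and both paths terminate at $v^*$.
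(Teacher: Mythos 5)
Your primary route does not go through, because there is no purely local contradiction at the first shared vertex. Write $v'$ for the first vertex $\neq v,v^*$ of $P_i(v)$ lying on $P_{i+2}(v)$, and let $C=P_i'\cup P_{i+2}'$ be the simple cycle formed by the two initial segments from $v$ to $v'$. At $v'$ the two edges of $C$ carry the \emph{incoming} colors $i$ and $i+2$; since the two colors of a non-root edge have different parity while $i$ and $i+2$ have the \emph{same} parity, the outgoing edges $e_i$ and $e_{i+2}$ at $v'$ sit on the two edges of $v'$ that are \emph{not} on $C$, so one points strictly inside $C$ and the other strictly outside. This configuration is perfectly consistent with the clockwise order of the colors around $v'$, so no violation can be read off locally; moreover Remark~\ref{rk:dual} concerns the adjacent colors $i$ and $i+1$, not $i$ and $i+2$, so it is not the right tool for a meeting of $P_{i+2}$ with $P_i$. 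The contradiction is necessarily global: letting $j\in\{i,i+2\}$ be the color of the edge at $v'$ pointing strictly inside $C$, the directed color-$j$ path issued from that edge cannot cross the boundary path of color $j$ (it would merge with it and close a monochromatic cycle) and cannot cross the boundary path of the other color (Lemma~\ref{lem:X} makes it bounce back inside), so it is trapped in $C$ and never reaches $v^*$. This is exactly the paper's proof.

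Your reserve argument is in the right spirit --- trap a monochromatic path in the bounded region --- but as stated it has two gaps. First, it follows the tree $T^*_{i+1}$, whereas the colors for which the trapping is guaranteed by the parity argument above are $i$ and $i+2$: it is precisely the observation that one of $e_i,e_{i+2}$ at $v'$ starts strictly inside $C$ that provides a seed for the trapped path, while for color $i+1$ you would still need to exhibit a color-$(i+1)$ arc inside $C$ and to check that such a path cannot escape through \emph{either} boundary path; the non-escape conditions of Lemma~\ref{lem:X} are side-dependent, so this is not automatic. Second, the phrase ``every non-root vertex strictly inside $R$'' presupposes that the region contains a vertex, which you have not established (and your region $R$ is not even a priori bounded by a simple cycle unless you truncate at the first shared vertex, as the paper does). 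If you replace color $i+1$ by the color of the inward-pointing edge at $v'$ and start the trapped path on that very edge, your reserve argument becomes the paper's proof.
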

\begin{proof}
Assume the contrary, and consider the first vertex $v'\neq v,v^*$ on the directed path $P_i(v)$ which belongs to $P_{i+2}(v)$. Let $P_i',P_{i+2}'$ be the part of the paths $P_i(v),P_{i+2}(v)$ from $v$ to $v'$. Clearly, $C=P_i'\cup P_{i+2}'$ is a simple cycle not containing $v^*$. For $j\in\{1,2,3,4\}$, let $e_j$ be the edge of color $j$ going out of $v'$. Since the two colors of any non-root edge of $G$ have different parity, neither $e_i$ nor $e_{i+2}$ are on the cycle $C=P_i'\cup P_{i+2}'$. Instead, one of these edges is strictly inside the cycle $C$ (while the other is strictly outside). Let us first suppose that $e_i$ is inside $C$ and  consider the directed path  $P_i(v')$ starting with the edge $e_i$. This path cannot cross $P_i'$ because it would create a cycle of color $i$ and it cannot cross $P_{i+2}'$ because of Lemma \ref{lem:X} (if the path $P_i(v')$ touches $P_{i+2}'$ it bounces back inside $C$). 
Therefore, the directed path $P_i(v')$ is trapped in the cycle $C$ and cannot reach $v^*$, which gives a contradiction. Similarly, the assumption that $e_{i+2}$ is inside $C$ leads to a contradiction.
\end{proof}

For $i$ in $\{1,2,3,4\}$, the cycle $P_i(v)\cup P_{i+2}(v)$ separates two regions of the plane. We denote by  $R_{i,i+2}(v)$ the region containing the root-edge $e^*_{i+1}$.
We also denote by $x(v)$ the number of non-root faces in the region $R_{1,3}(v)$ and
 by $y(v)$ the number of non-root faces in the region $R_{4,2}(v)$.
 In this way one associates to any non-root vertex $v$ the point  $\pp(v)=(x(v),y(v))$ in the grid $\{0,\ldots,n-2\}\times\{0,\ldots,n-2\}$ (shortly called the $(n-2)\times(n-2)$ grid).
 Informally, if the vertices $v_1^*,v_2^*,v_3^*,v_4^*$ are thought as down, left, up, right, then the coordinate $x(v)$ corresponds to the number of faces on the left of the ``vertical line'' $P_1(v)\cup P_3(v)$ and the coordinate $y(v)$ corresponds to the number of faces below the ``horizontal line'' $P_2(v)\cup P_4(v)$. This placement of vertices is represented in~Figure~\ref{fig:tetravalent}. As stated next, it yields both a planar orthogonal drawing
and a planar straight-line drawing.

Before stating the straight-line drawing result, we make the following observation:
if a $4$-regular graph of mincut $4$ has a double edge, then the double
edge must delimit a face (of degree $2$). Denote by $\widetilde{G}$ the simple
graph obtained from $G$ by emptying all faces of degree $2$
(i.e., turning such a double edge into a single edge).

\begin{thm}[straight-line drawing]\label{thm:straightline}
The placement of each non-root vertex $v$  at the point $\pp(v)=(x(v),y(v))$
of the $(n-2)\times(n-2)$ grid gives a planar straight-line drawing of~$\widetilde{G}\setminus v^*$. Moreover the points $\pp(v_1^*),\pp(v_2^*),\pp(v_3^*),\pp(v_4^*)$ are respectively on the down, left, up, and right boundaries of the grid.
\end{thm}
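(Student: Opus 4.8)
The plan is to follow the classical face-counting planarity argument of Schnyder, adapted to the present $4$-regular (dual-quadrangulation) setting, exactly as its analog is carried out for separating decompositions of quadrangulations. Two things must be shown: that the four poles $v_1^*,v_2^*,v_3^*,v_4^*$ land on the correct sides of the grid, and that the open segments representing the non-root edges of $\widetilde{G}\setminus v^*$ are pairwise non-crossing and non-degenerate. The whole argument is powered by monotonicity and nesting properties of the two cycles $P_1(v)\cup P_3(v)$ and $P_2(v)\cup P_4(v)$ attached to each non-root vertex $v$; the raw material for these properties is Lemma~\ref{lem:X} (telling which side the two off-path arcs sit on), together with the one-directional crossing rule of Remark~\ref{rk:dual} and the fact, from Lemma~\ref{lem:no-intersection}, that $P_i(v)\cup P_{i+2}(v)$ is a simple cycle.

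The core lemma I would establish first is a nesting statement: if a non-root vertex $u$ lies strictly inside the region $R_{4,2}(v)$ (the ``down'' region of $v$, i.e.\ the one containing $e_1^*$), then the whole cycle $P_2(u)\cup P_4(u)$ stays inside the closed region $\overline{R_{4,2}(v)}$, so that $R_{4,2}(u)\subsetneq R_{4,2}(v)$ and hence $y(u)<y(v)$; and symmetrically for the three other regions and the coordinate $x$. The proof of the inclusion is where the crossing rules enter: a color-$2$ (resp.\ color-$4$) path issued from $u$ can meet the same-colored boundary path $P_2(v)$ (resp.\ $P_4(v)$) only by merging into it, since two directed paths of one color cannot cross without creating a monochromatic cycle, whereas each time it reaches the opposite-colored boundary path it is deflected back inside by Lemma~\ref{lem:X}, just as in the proof of Lemma~\ref{lem:no-intersection}. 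Strictness of the inclusion comes from the edge joining $u$ to the boundary, which detaches at least one non-root face. Iterating along a directed color-$i$ path then shows that the relevant coordinate is strictly monotone from $v$ toward $v^*$; in particular distinct vertices receive distinct points, and the two cycles of $v$ cut the plane into four quadrant-like regions, into exactly one of which every other vertex falls.

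From the nesting lemma the two conclusions follow. For the poles: the down region $R_{4,2}(v_1^*)$ of the south pole is bounded by $e_1^*$ and contains only root-faces, so $y(v_1^*)=0$; dually $x(v_2^*)=0$, while the ``up'' pole $v_3^*$ has all $n-2$ non-root faces below it so $y(v_3^*)=n-2$, and $x(v_4^*)=n-2$, placing the four poles on the down, left, up, and right boundaries. For planarity: given a non-root edge $\{u,v\}$, the color of the arc $v\to u$ tells us which quadrant of $v$ the vertex $u$ occupies, and the nesting lemma converts this into a pair of strict inequalities between $\pp(u)$ and $\pp(v)$ with a definite sign pattern; the symmetric statement holds seen from $u$. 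If two independent edges crossed, their four endpoints would be forced into a configuration in which each of two vertices lies in a forbidden region of the other, contradicting the quadrant dichotomy; this is the standard Schnyder non-crossing argument, and it simultaneously yields that no segment is degenerate.

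The main obstacle I expect is the nesting lemma itself. The delicate point is that the boundary cycle $P_2(v)\cup P_4(v)$ uses the two \emph{even} colors $2$ and $4$, which are not consecutive, so Remark~\ref{rk:dual}'s crossing rule does not directly constrain a color-$2$ path against a color-$4$ path; one must instead confine the escaping path using the intermediate color $3$ (the ``up'' direction, whose path cannot leave the up-region) or re-invoke Lemma~\ref{lem:no-intersection} at the vertex $u$, and then track strictness carefully so that the resulting drawing is genuinely non-degenerate. It is precisely to rule out the collapse coming from degree-$2$ faces that one must pass from $G$ to $\widetilde{G}$ before asserting the straight-line property.
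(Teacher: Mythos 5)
The nesting/monotonicity part of your plan is sound and matches the paper's Lemma~\ref{lem:ordered-by-inclusion}; your worry about colors $2$ and $4$ not being consecutive is resolved exactly as you suspect, via Lemma~\ref{lem:X} (in an even decomposition the two off-path arcs at a vertex of $P_i(v)$ lie on the same side, so an escaping path of color $i$ or $i+2$ either merges with the same-colored boundary path or bounces back inside the region). The placement of the four poles is also fine.

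The genuine gap is in the non-crossing step. Knowing, for each edge $\{u,v\}$, the quadrant of $\pp(v)$ relative to $\pp(u)$ (and vice versa) does not exclude crossings: two segments each going ``down-right'' from their first endpoint can perfectly well cross, e.g.\ $(0,10)$--$(10,0)$ and $(4,9)$--$(6,1)$. The ``standard Schnyder non-crossing argument'' you invoke works for triangulations because the three region-counts are barycentric coordinates summing to a constant, which produces, for every vertex $w\neq u,v$, a coordinate in which $w$ strictly dominates both endpoints of the edge; here there are only two coordinates built from four regions and no such domination statement is available, so ``each of two vertices lies in a forbidden region of the other'' cannot be extracted from the quadrant dichotomy alone. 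The paper's actual route is different and essentially unavoidable: it first proves that the one-bend \emph{orthogonal} drawing is planar --- a substantive argument that traps a putative crossing arc inside the cycle formed by $e$ together with the color-$4$ paths from $u$ and $v$ to their common ancestor in $T_4^*$, and derives a contradiction between membership in $R_{1,3}(u)$ and in $R_{3,1}(u)$ --- and then establishes the face structure (Property~(3) of Theorem~\ref{thm:bentline}: each non-root face has exactly two special edges and a prescribed cyclic pattern of bends), which yields the empty-rectangle property of Lemma~\ref{lem:empty-rectangle}: the axis-parallel rectangle with diagonal $[\pp(u),\pp(v)]$ lies inside a single face of the orthogonal embedding and contains no third vertex. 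Straight-line planarity is then deduced by combining these two facts: if two chords crossed, their rectangles would have to meet either in four boundary points (impossible, since the orthogonal bent-edges would then cross) or while containing an endpoint (impossible by the empty-rectangle property), leaving only the configuration where the two chords are separated by the line through the two intersection points of the rectangle boundaries. None of this face-level structure appears in your proposal, and without it the non-crossing claim does not follow.
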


For a vertex $v\neq v^*$ of~$G$ we call \emph{ray in the direction} 1 (resp. 2,3,4) from the point $\pp(v)$ the half-line starting from $\pp(v)$ and going in the negative $y$ direction (resp. negative $x$ direction, positive $y$ direction, positive $x$ direction). For an edge $e=\{u,v\}$ of~$G$ not incident to $v^*$, we denote by $\pp(e)$ the intersection of the ray in direction $i$ from $\pp(u)$ with the ray in direction $j$ from $\pp(v)$, where $i$ is the color of the arc $(u,e)$ and $j$ is the color of the arc $(v,e)$. Observe that one  ray is horizontal while the other is vertical (because $i$ and $j$ have different parity), hence the intersection $\pp(e)$ (if not empty) is a point. If $\pp(e)$ is a point (this is always the case, as we will prove shortly), then we call the union of segments $[\pp(u),\pp(e)]\cup[\pp(v),\pp(e)]$ the \emph{bent-edge} corresponding to~$e$. We say that the bent-edge from $u$ to $v$ is \emph{down-left} (resp. \emph{down-right}, \emph{up-left}, \emph{up-right}) if the vector from $\pp(u)$ to $\pp(e)$ is down (resp. down, up, up) and the vector from $\pp(e)$ to $\pp(v)$ is left (resp. right, left, right). 
We now state the main result of this section.

\begin{thm}[orthogonal drawing]\label{thm:bentline}
For each non-root edge $e=\{u,v\}$ of~$G$, the intersection $\pp(e)$ is a point. Moreover, if one places each non-root vertex $v$ of~$G$ at the points $\pp(v)$ of the $(n-2)\times(n-2)$ grid  and draws the bent-edge $[\pp(u),\pp(e)]\cup[\pp(v),\pp(e)]$ for each non-root edge $e=\{u,v\}$ of~$G$, one obtains a
planar orthogonal drawing of~$G\backslash v^*$ with one bend per edge.
Moreover, the drawing has the following properties:
\begin{enumerate}
\item[(1)] Each line and column of the $(n-2)\times(n-2)$ grid contains exactly one vertex.
\item[(2)] The spanning tree $T_1^*$ (resp. $T_2^*$, $T_3^*$, $T_4^*$) is made of all the arcs  $(u,e)$ such that the vector from $\pp(u)$ to $\pp(e)$ is going down (resp. left, up, right).
\item[(3)] Every non-root face $f$ has two distinct edges  $f_a=\{a,a'\}$ $f_b=\{b,b'\}$ called \emph{special}.
If the face $f$ is black the bent-edges in clockwise direction around $f$ are as follows:  the special bent-edge $\{a,a'\}$ is right-down, the edges from $a'$ to $b$  are left-down, the special bent-edge $\{b,b'\}$ is left-up, the edges from  $b'$ to $a$ are right-up; see Figure~\ref{fig:face-configuration}. The white faces satisfy the same property with \emph{right,down,left,up} replaced by \emph{up,right,down,left}. Moreover, for each black (resp. white) face, one has $x(a)+1=y(b)$ and $y(a')+1=y(b')$ (resp. $x(a')-1=x(b')$ and $y(a)+1=y(b)$).
\end{enumerate}
Adding the root-vertex $v^*$ and its four incident edges $e_1^*$, $e_2^*$,
$e_3^*$, $e_4^*$ requires 3 more rows, 3 more columns,
and 8 additional bends, see Figure~\ref{fig:add_root}.
Overall the planar orthogonal drawing of a 4-regular plane graph of mincut $4$
with $n$ vertices is on the $(n+1)\times (n+1)$ grid and has a total of $2n+4$ bends.
\end{thm}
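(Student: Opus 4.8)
The plan is to reduce everything to a single \emph{monotonicity lemma} controlling how the coordinates $x(\cdot),y(\cdot)$ change across an edge, and then to read off each assertion of the theorem from it together with a face-by-face analysis. The starting observation, already implicit in Lemma~\ref{lem:characterization-even-dual-Schnyder}, is that the two colors of a non-root edge have different parity; since the four outgoing arcs around a non-root vertex carry the colors $1,2,3,4$ in clockwise order, every non-root edge $e=\{u,v\}$ is of one of the four \emph{types} $\{1,2\},\{2,3\},\{3,4\},\{4,1\}$, and if arc $(u,e)$ has color $i$ then $e$ is the first edge of $P_i(u)$, so $P_i(u)=\{u,v\}\cup P_i(v)$. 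Recall also that $P_1(v)\cup P_3(v)$ and $P_2(v)\cup P_4(v)$ are simple cycles (Lemma~\ref{lem:no-intersection}) that cut out the regions whose non-root faces are counted by $x(v)$ and $y(v)$.

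First I would prove the monotonicity lemma: for a non-root edge $e=\{u,v\}$ with arc $(u,e)$ of color $i$ and arc $(v,e)$ of color $j$, the two separating cycles of $u$ and of $v$ differ only along $e$, so the sets of non-root faces they enclose are nested; counting the faces that switch sides yields explicit inequalities between $\pp(u)$ and $\pp(v)$. Concretely, for a type-$\{1,2\}$ edge (with $(u,e)$ of color $1$ and $(v,e)$ of color $2$) this gives $x(u)\le x(v)$ and $y(v)\le y(u)$, and the three other types follow by the cyclic color symmetry $i\mapsto i+1$, which rotates the four ray directions. The delicate point is that a color-$(i{+}1)$ path may cross a color-$i$ path from left to right (Remark~\ref{rk:dual} forbids only the reverse), so I must argue, using Lemma~\ref{lem:X}, that the portions of the four paths bounding the two regions cannot interleave in a way that would break the nesting; this containment argument, in the same spirit as the proof of Lemma~\ref{lem:no-intersection}, is where most of the work lies. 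Granting the inequalities, the point $\pp(e)=(x(u),y(v))$ lies on the color-$i$ ray from $\pp(u)$ and on the color-$j$ ray from $\pp(v)$, so $\pp(e)$ is a well-defined point and the bent-edge has the claimed type; Property~(2) is then immediate, since by construction arc $(u,e)$ has color $i$ exactly when the vector $\pp(u)\to\pp(e)$ points in direction $i$.

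Next I would handle Properties~(1) and~(3) together by walking around each non-root face $f$. Using the even structure (arcs of even color have a black face on their right, by Lemma~\ref{lem:characterization-even-dual-Schnyder}) and the single-descent property of the underlying regular labelling, the colors met clockwise around $f$ increase except at two ``special'' edges where they drop; applying the monotonicity lemma edge by edge shows that the bent-edges turn consistently, so $f$ is drawn as a simple rectilinear staircase as in Figure~\ref{fig:face-configuration}, and that the separating cycles of consecutive corners of $f$ differ by exactly the single face $f$, which produces the $+1$ coordinate relations of Property~(3). Since each non-root face is a simple staircase polygon and the four arcs around every non-root vertex leave in the fixed clockwise order down, left, up, right, the rotation system of $G\setminus v^*$ is preserved and the faces tile the grid without overlap, giving planarity; the $+1$ relations then force the $x$-coordinates (resp.\ $y$-coordinates) of distinct vertices to be distinct and to sweep out $\{0,\dots,n-2\}$, with $v_2^*,v_4^*$ at $x=0,n-2$ and $v_1^*,v_3^*$ at $y=0,n-2$ by Theorem~\ref{thm:straightline}, which is Property~(1); in particular every non-root edge has $x(u)\ne x(v)$, so each bent-edge has exactly one genuine bend.

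Finally, the root-vertex is inserted by hand. By Theorem~\ref{thm:straightline} the vertices $v_1^*,v_2^*,v_3^*,v_4^*$ sit on the bottom, left, top and right sides of the $(n-2)\times(n-2)$ grid, so I would enlarge the grid by $3$ rows and $3$ columns, place $v^*$ in the freed corner region, and route the four root-edges around the existing drawing, two of them with a single bend and two with three bends, for $8$ extra bends, exactly as in Figure~\ref{fig:add_root}; checking that these four routes avoid the current drawing and each other is a direct verification. Counting then gives the stated tallies: the grid becomes $(n+1)\times(n+1)$, and the $2n-4$ non-root edges contribute one bend each while the root-edges contribute $8$, for $2n+4$ bends in total. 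The main obstacle throughout is the non-crossing/nesting argument underlying the monotonicity lemma and the staircase shape of the faces, since the possibility of left-to-right crossings between consecutive-color paths means the region containments must be established carefully rather than taken for granted.
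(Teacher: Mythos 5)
Your outline gets the easy parts right (well-definedness of $\pp(e)$, Property~(2), the face-by-face derivation of Property~(3), the root-vertex insertion and the bend count), but the core of the argument has a genuine gap: you only establish coordinate monotonicity \emph{across an edge}, whereas both Property~(1) and planarity need the global statement that for \emph{any} two distinct non-root vertices $u,v$ and any color $i$ the regions $R_{i,i+2}(u)$ and $R_{i,i+2}(v)$ are \emph{strictly} nested (this is Lemma~\ref{lem:ordered-by-inclusion} in the paper, proved via the ``weakly inside'' characterization and Lemma~\ref{lem:X}). Your edge-local lemma only yields weak inequalities $x(u)\le x(v)$ for adjacent vertices, and the step ``the $+1$ relations then force the $x$-coordinates of distinct vertices to be distinct and to sweep out $\{0,\dots,n-2\}$'' does not follow: the relations $x(f_x^-)+1=x(f_x^+)$ give one constraint per face, and to conclude that the $n-1$ vertices realize all of $\{0,\dots,n-2\}$ you would need these constraints to chain into a Hamiltonian order --- that is exactly the equatorial-line property of Section~\ref{sec:eq_line}, which is a separate theorem you neither prove nor invoke. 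Without strict distinctness you also cannot conclude that each bent-edge has a genuine bend, nor that the face boundaries are \emph{simple} staircases.

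The planarity step inherits this problem and adds one of its own. Your argument ``each face is a simple staircase and the rotation system is preserved, so the faces tile the grid without overlap'' is precisely the planarity claim restated, not a proof; turning ``locally consistent faces'' into global non-overlapping requires either a winding-number/degree argument (which you would have to set up, including control of the outer boundary) or the kind of direct crossing analysis the paper performs. The paper's proof of non-crossing repeatedly applies the global nesting lemma to \emph{non-adjacent} vertices (e.g.\ placing $u'$ weakly inside $R_{2,4}(v)$ and $R_{4,2}(u)$, then trapping $e'$ in two complementary regions), which your edge-local lemma cannot supply. So the missing ingredient is concrete: prove the total ordering by strict inclusion of the regions $R_{i,i+2}(\cdot)$ over all non-root vertices; with it, Property~(1), the strictness of the coordinate inequalities, and the crossing analysis all go through, and without it the remaining steps are either circular or unsupported.
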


\begin{figure}[h]
\begin{minipage}{1.1\linewidth}
\hspace{-.8cm}
\begin{minipage}{.65\linewidth}\vspace{.2cm}
\centerline{\includegraphics[scale=.60]{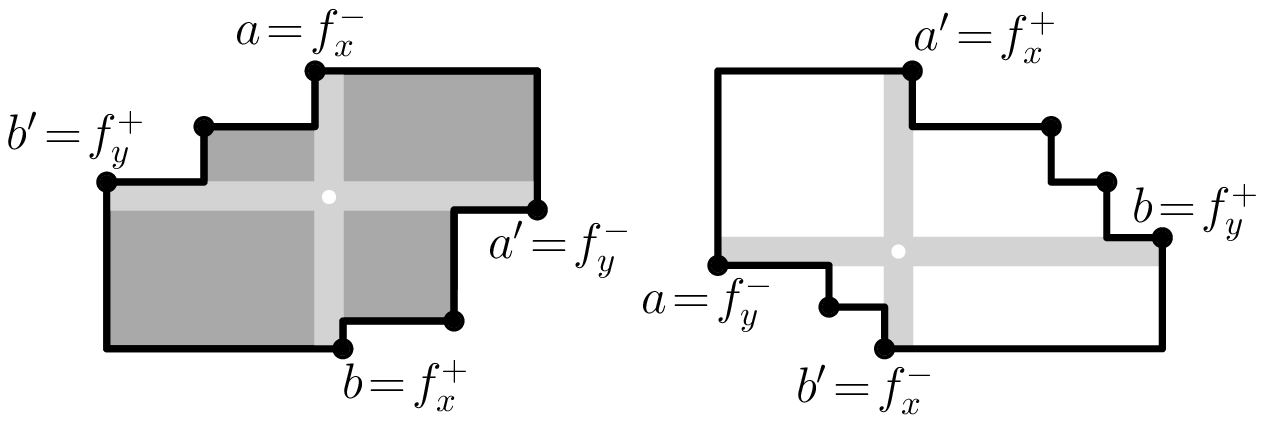}}
\caption{Property (3) for a black face (left) and a white face (right).}\label{fig:face-configuration}
\end{minipage}
\hspace{-1.7cm}\vspace{-.2cm}
\begin{minipage}{.5\linewidth}
\centerline{\includegraphics[scale=.65]{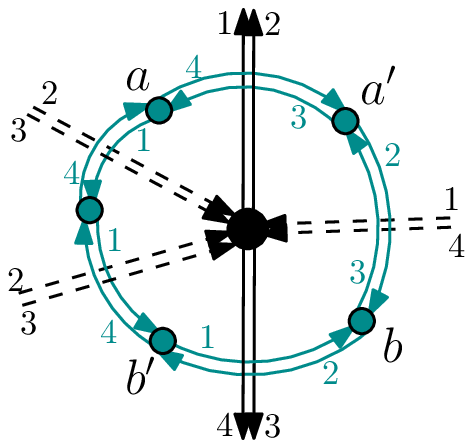}}
\caption{Around a black vertex of a quadrangulation.}\label{fig:proof-drawing-face}
\end{minipage}
\end{minipage}
\end{figure}

\begin{figure}
\begin{center}
\includegraphics[width=8cm]{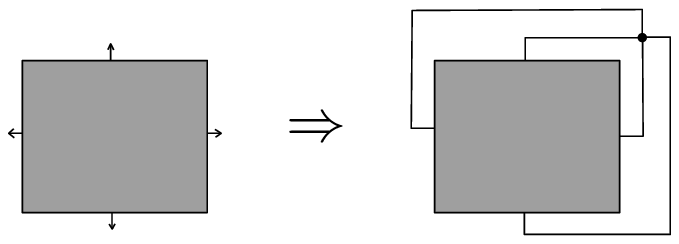}
\end{center}
\caption{Adding the root-vertex to the orthogonal drawing.}
\label{fig:add_root}
\end{figure}



Before starting the proof of Theorems~\ref{thm:straightline} and~\ref{thm:bentline}, we set some terminology. Let $v$ be a non-root vertex and let $i$ be in $\{1,2,3,4\}$. By Lemma \ref{lem:X}, for any vertex $u\neq v,v^*$ on the cycle $C=P_i(v)\cup P_{i+2}(v)$, the two edges incident to $u$ which are not on $C$ are either both strictly in the region $R_{i,i+2}(v)$ or both strictly outside this region.  A vertex $u\neq v,v*$ is said to be \emph{weakly inside} the region $R_{i,i+2}(v)$ if it is either strictly inside this region or on the cycle $P_i(v)\cup P_{i+2}(v)$ with two edges strictly inside this region.
\begin{lem}\label{lem:ordered-by-inclusion}
Let $i$ be a color in $\{1,2,3,4\}$ and let $u,v$ be distinct non-root vertices of $G$.  Then either  $R_{i,i+2}(u)\subsetneq R_{i,i+2}(v)$ or $R_{i,i+2}(v)\subsetneq R_{i,i+2}(u)$. Moreover $R_{i,i+2}(u)\subsetneq R_{i,i+2}(v)$ if and only if  $u$ is weakly inside $R_{i,i+2}(v)$.
\end{lem}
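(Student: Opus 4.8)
The plan is to analyze the boundary cycles $C_w := P_i(w)\cup P_{i+2}(w)$, which by Lemma~\ref{lem:no-intersection} are simple and bound $R_{i,i+2}(w)$, and to show that for distinct non-root $u,v$ these cycles are nested. First I would record the features common to all such cycles. Since $P_i(w)$ and $P_{i+2}(w)$ reach $v^*$ through the root-edges $e_i^*$ and $e_{i+2}^*$, every cycle $C_w$ passes through $v^*$ using exactly $e_i^*$ and $e_{i+2}^*$; hence near $v^*$ all regions $R_{i,i+2}(w)$ occupy the same sector, the one containing $e_{i+1}^*$ (which sits between $e_i^*$ and $e_{i+2}^*$ around $v^*$). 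In particular $R_{i,i+2}(u)$ and $R_{i,i+2}(v)$ always overlap. I would also note that distinct vertices give distinct cycles, so all inclusions below are automatically strict: on $C_w$ the vertex $w$ is the only vertex other than $v^*$ at which both incident cycle-edges are outgoing (of colors $i$ and $i+2$), so $C_u=C_v$ forces $u=v$.

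The core step is to show that $C_u$ and $C_v$ never cross transversally. There are four kinds of encounters. Two like-colored paths $P_i(u),P_i(v)$ (resp. $P_{i+2}(u),P_{i+2}(v)$) lie in the tree $T_i^*$ (resp. $T_{i+2}^*$) and are directed toward the common root $v^*$, so once they meet they coincide, sharing only a final segment up to $v^*$ and never crossing. A color-$i$ path and a color-$(i+2)$ path, say $P_i(u)$ and $P_{i+2}(v)$, share no edge at all, because in an even regular decomposition the two colors of a non-root edge have different parity (Lemma~\ref{lem:characterization-even-dual-Schnyder}) while $i$ and $i+2$ have the same parity; moreover at any vertex $w$ where they meet, both the edge by which $P_i(u)$ enters $w$ and the color-$i$ edge by which it leaves are non-cycle edges of $C_v$ at $w$, so by Lemma~\ref{lem:X} (applied to $P_{i+2}(v)$, i.e. with $i$ replaced by $i+2$) they lie on the same side of $P_{i+2}(v)$: the path touches $P_{i+2}(v)$ and bounces off without crossing, exactly as in the proof of Lemma~\ref{lem:no-intersection}. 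Thus $C_u$ and $C_v$ do not cross.

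Two simple closed curves that do not cross transversally and whose interiors overlap must be nested; applied to $C_u,C_v$ this yields the dichotomy $R_{i,i+2}(u)\subsetneq R_{i,i+2}(v)$ or $R_{i,i+2}(v)\subsetneq R_{i,i+2}(u)$, with strictness from the first step. It remains to match inclusion with the ``weakly inside'' condition. For the direction ``$u$ weakly inside $R_{i,i+2}(v)$ implies $R_{i,i+2}(u)\subsetneq R_{i,i+2}(v)$'' I would show that both $P_i(u)$ and $P_{i+2}(u)$ stay in $\overline{R_{i,i+2}(v)}$: starting weakly inside, such a directed path can leave only by meeting the boundary, but it cannot cross a like-colored boundary path (it merges into it and runs along $C_v$ to $v^*$) nor the opposite-colored boundary path (it bounces inward, by the analysis above); hence $C_u\subseteq\overline{R_{i,i+2}(v)}$ and, reading off the $e_{i+1}^*$-side, $R_{i,i+2}(u)\subseteq R_{i,i+2}(v)$. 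Conversely, if $R_{i,i+2}(u)\subsetneq R_{i,i+2}(v)$ then $u\in\overline{R_{i,i+2}(v)}$, and if $u$ lies on $C_v$ then one of the edges starting $P_i(u),P_{i+2}(u)$ is a non-cycle edge of $C_v$ pointing into $R_{i,i+2}(u)\subseteq R_{i,i+2}(v)$, whereupon Lemma~\ref{lem:X} forces its companion non-cycle edge to the same interior side, so $u$ is weakly inside.

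The main obstacle I anticipate is the orientation bookkeeping: one must check consistently, at $v^*$, at each boundary vertex, and at $u$, that the sector cut out on the $e_{i+1}^*$-side really is $R_{i,i+2}$ and that the ``bounces'' go into this region rather than its complement (a few easy separate cases also arise when a path passes through the corners $v$ or $v^*$). This is precisely where Lemma~\ref{lem:X} is used, and it is the only place where the clockwise crossing rule of regular decompositions, rather than just planarity and the tree property, is essential.
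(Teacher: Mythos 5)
Your proof is correct and follows essentially the same route as the paper's: both rest on Lemma~\ref{lem:X} (a path of color $i$ or $i+2$ bounces off the opposite-colored boundary path and merges into the like-colored one, since both lie in the tree $T_i^*$, resp.\ $T_{i+2}^*$), and your treatment of the equivalence between strict inclusion and being weakly inside is the paper's argument almost verbatim. The only organizational difference is in the dichotomy: you obtain it by showing directly that the cycles $C_u$ and $C_v$ never cross and invoking planar nestedness, whereas the paper deduces it by applying the weakly-inside equivalence to the complementary region $R_{i+2,i}(v)$; your explicit check that distinct vertices yield distinct cycles (hence that the inclusions are strict) is a small point the paper leaves implicit.
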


Observe that Lemma~\ref{lem:ordered-by-inclusion} implies that for all $i$ in $\{1,2,3,4\}$ the regions $R_{i,i+2}(v)$ are (strictly) totally ordered by inclusion. In particular, the non-root vertices of $G$ all have distinct $x$ coordinates and distinct $y$ coordinates. Since the $x$ and $y$ coordinates are constrained to be in $\{0,\ldots,n-2\}$, this implies that the vertices are placed according to a permutation: each line and column of the grid contains exactly one vertex\footnote{As mentioned in the introduction, our placement of vertices is closely related to the bijection between plane bipolar orientations and Baxter permutations
in~\cite{Bonichon:Baxter-permutations}.}.
The rest of this section is devoted to the proof of Lemma~\ref{lem:ordered-by-inclusion} and then Theorems~\ref{thm:straightline} and~\ref{thm:bentline}.

\begin{proof}[Proof of Lemma~\ref{lem:ordered-by-inclusion}\\]
\ite We first prove that $R_{i,i+2}(u)\subsetneq R_{i,i+2}(v)$ if and only if $u$ is weakly inside $R_{i,i+2}(v)$.
First suppose that $R_{i,i+2}(u)\subsetneq R_{i,i+2}(v)$. In this case,  $u$ is either strictly inside $R_{i,i+2}(v)$ or on the cycle $P_{i}(v)\cup P_{i+2}(v)$. If $u\in P_{i}(v)$, the edge $e_{i+2}$ of color $i+2$ going out of $u$ is not in $P_{i}(v)\cup P_{i+2}(v)$ (because of Lemma~\ref{lem:X}). Therefore the edge $e_{i+2}$ which belongs to $R_{i,i+2}(u)\subsetneq R_{i,i+2}(v)$ is strictly inside $ R_{i,i+2}(v)$. Thus $u$ is weakly inside $R_{i,i+2}(v)$. A similar proof shows that if $u\in P_{i+2}(v)$, then $u$ is  weakly inside $R_{i,i+2}(v)$. Thus, in all cases $u$ is  weakly inside $R_{i,i+2}(v)$.

We now prove the other direction of the equivalence: we suppose that  $u$ is weakly inside $R_{i,i+2}(v)$ and want to prove that  $R_{i,i+2}(u)\subsetneq R_{i,i+2}(v)$. It suffices to show that the paths  $P_i(u)$ and $P_{i+2}(u)$ have no edge strictly outside of the region $R_{i,i+2}(v)$. We first prove that $P_i(u)$ has no edge strictly outside of $R_{i,i+2}(v)$. Let $e$ be the edge of color $i$ going out of $u$ (i.e. the first edge of the directed path $P_i(u)$). Since $u$ is weakly inside $R_{i,i+2}(v)$ either $e$ belongs to $P_{i}(v)$ (in which case $P_i(u)$ is contained in $P_{i}(v)$) or  $e$ is strictly inside $R_{i,i+2}(v)$. Thus the path  $P_i(u)$ starts inside the region $R_{i,i+2}(v)$. Moreover the path $P_i(u)$ cannot cross the cycle $P_{i}(v)\Cup P_{i+2}(v)$ because, if $P_i(u)$ arrives at a vertex on $P_{i}(v)$ it continues on $P_{i}(v)$, while if it arrives at a vertex on $P_{i+2}(v)$ it bounces back strictly inside $R_{i,i+2}(v)$ by Lemma~\ref{lem:X}. Thus, the path $P_i(u)$ has no edge strictly outside of $R_{i,i+2}(v)$.
Similarly, the path $P_{i+2}(u)$ has no edge strictly outside of the region $R_{i,i+2}(v)$.

\ite We now suppose that the region $R_{i,i+2}(u)$ is not included in  $R_{i,i+2}(v)$ (and want to prove  $R_{i,i+2}(v)\subsetneq R_{i,i+2}(u)$). By the preceding point, this implies that $u$ is not weakly inside $R_{i,i+2}(v)$. In this case,  $u$ is weakly inside the complementary region $R_{i+2,i}(v)$. By the preceding point (applied to color $i+2$) this implies $R_{i+2,i}(u)\subsetneq R_{i+2,i}(v)$. Or equivalently  $R_{i,i+2}(v)\subsetneq R_{i,i+2}(u)$.
\end{proof}


\begin{proof}[Proof of Theorem~\ref{thm:bentline}]
\ite We first prove that for a non-root edge $e=\{u,v\}$, the intersection $\pp(e)$ is a point. It is easy to see that if the arc $(u,e)$ is colored $i$, then the vertex $v$ is weakly inside the region $R_{i-1,i+1}(u)$. By Lemma~\ref{lem:ordered-by-inclusion}, this implies that for $i=1$ (resp. $i=2,3,4$), the point $\pp(v)$ is below
(resp. on the left of, above, on the right of) the point $\pp(u)$. This shows that the intersection $\pp(e)$ is non-empty, hence, a point.\\
\ite We now show that the orthogonal
drawing is planar.
Consider an edge $e=\{u,v\}$ and the segment $[\pp(u),\pp(e)]$ (which is
the embedding
of the arc $(u,e)$). This segment contains no point $\pp(w)$ for $w\neq u$ since every line and column of the grid contains exactly one point. We now suppose for contradiction that the segment $[\pp(u),\pp(e)]$ crosses
the segment $[\pp(u'),\pp(e')]$
for another arc $(u',e')$, with $v'$ the other extremity of $e'$.
  Clearly,  $u'\neq u,v$ and $v\neq u'$ (but the case $v=v'$ is possible). By symmetry between the colors, we can assume that the color of the direction of $e$ from $u$ to $v$ is 1. Thus, the segment $[\pp(u),\pp(e)]$ is vertical with $x(e)=x(u)$ and $y(e)=y(v)<y(u)$. Moreover the segment $[\pp(u'),\pp(e')]$ (which we assume to cross $[\pp(u),\pp(e)]$) is horizontal with $x(e')=x(v')$ and $y(v)<y(e')=y(u')<y(u)$. We now consider the case $x(u)<x(v)$ (the case $x(u)>x(v)$ being symmetric). This means that the color of the direction of $e$ from $v$ to $u$ is 2 (in the other case $x(u)>x(v)$, the color would be~4). The situation is represented in Figure~\ref{fig:proof-drawing2}. Observe that the path $P_1(u)$ is equal to $\{e\}\cup P_1(v)$, while the path $P_2(v)$ is equal to $\{e\}\cup P_2(u)$.

We claim that all the edges of the path $P_4(u)$ lie strictly inside the region $R_{3,1}(u)$. Indeed, the path $P_4(u)$ starts strictly inside $R_{3,1}(u)$ and if it arrives at a non-root vertex of $P_{1}(u)\cup P_{3}(u)$ it bounces back inside  $R_{3,1}(u)$ by Lemma~\ref{lem:X} (indeed it would arrive at the path $P_1(u)$ from its left, and it would arrive at the path $P_3(u)$ from its right).
The same proof shows that all the edges of the path $P_4(v)$ lie strictly inside the region $R_{3,1}(v)$.
Furthermore, $R_{3,1}(v)\subseteq R_{3,1}(u)$ (because $x(u)<x(v)$ and Lemma~\ref{lem:ordered-by-inclusion}), hence the paths $P_4(u)$ and $P_4(v)$ have all their edges strictly inside  $R_{3,1}(u)$.

Let $C$ be the cycle made of $e$ and the parts of the paths $P_4(u)$, $P_4(v)$ between $u$, $v$ and their common ancestor in the tree $T_4^*$ (the region enclosed by $C$ is shaded in Figure~\ref{fig:proof-drawing2}). By the arguments above, we know that all the edges of $C$ except $e$ are strictly inside $R_{3,1}(u)$.
 By Lemma~\ref{lem:ordered-by-inclusion}, the inequality $y(v)<y(u')<y(u)$ implies that $u'$ is weakly inside $R_{2,4}(v)$  and weakly inside $R_{4,2}(u)$. Thus $u'$ is either strictly inside $C$ or on $C$ with two edges strictly inside $C$. Thus, $u'$ has its four incident
 edges strictly inside the region $R_{3,1}(u)$. In particular, $x(u)<x(u')$ by Lemma~\ref{lem:ordered-by-inclusion}.  Hence, if the segment $[\pp(u'),\pp(e')]$ is to cross $[\pp(u),\pp(e)]$, one must have $x(v')<x(u)$. By Lemma~\ref{lem:ordered-by-inclusion}, this implies that $v'$ is weakly inside the region $R_{1,3}(u)$, that is, has its four incident edges in $R_{1,3}(u)$.  Hence the edge $e'=\{u',v'\}$ is both in $R_{1,3}(u)$ (since $e'$ is incident to $v'$) and strictly inside $R_{3,1}(u)$ (since $e'$ is incident to $u'$), which gives a contradiction.

\ite We now examine Properties (1), (2), (3).
Property (1) has already been proved (after Lemma \ref{lem:ordered-by-inclusion}). Property (2) is immediate from the definitions. We now prove Property (3) for a black face $f$ (the case of a white face being symmetric). We first study the colors of the arcs which appear in clockwise direction around $f$. Let $Q=G^*$ be the quadrangulation which is the dual of $G$,  and let  $v$ be the vertex of $Q$ corresponding to the back face $f$. We consider the even clockwise-labelling $L$ of $Q$ corresponding to the even regular decomposition $(T_1^*,\ldots,T_4^*)$ of $G$. In the labelling $L$, the corners in clockwise order around $v$ are partitioned into
two non-empty intervals $I_1,I_3$ such that corners in $I_1$ (resp.  $I_3$)
  are colored $1$ (resp. $3$). Hence the edges in clockwise order around $v$ (see Figure~\ref{fig:face-configuration}) are made of an edge with colors 1,2 oriented away from $v$, a (possibly empty) sequence of edges with colors 4,1 oriented toward $v$, an edge with colors 3,4 oriented away from $v$, and a (possibly empty) sequence of edge with colors 2,3 oriented toward $v$.
Consequently, by Lemma~\ref{lem:alternative_def} on the duality relations of edge colors,
the edges of $G$ in clockwise order around the black face $f$ are made of
an edge with clockwise color 4 and counterclockwise color 3 (right-down bent-edge),
a sequence of edges with clockwise color 2 and counterclockwise color 3 (left-down bent-edges),
an edge with clockwise color 2 and counterclockwise color 1 (left-up bent-edge),
a sequence of edges with clockwise color 4 and counterclockwise color 1 (right-up bent-edges).
Denoting by $e_a=\{a,a'\}$ the edge with clockwise color 4 and counterclockwise color 3 and by $e_b=\{b,b'\}$ the edge with color 2 and counterclockwise color 1, we have proved the first part of Property (3). It remains to prove $x(a)+1=x(b)$ and $y(a')+1=y(b')$.
Observe that the counterclockwise path from $a$ to $b$ around $f$ has color $1$ while the counterclockwise path from $b$ to $a$ around $f$ has color $3$. Therefore the regions $R_{1,3}(a)$ and $R_{1,3}(b)$ only differ by the face $f$: $R_{1,3}(a)\cup \{f\}=R_{1,3}(b)$. Hence, $x(a)+1=x(b)$. Similarly, the clockwise path from $a'$ to $b'$ has color 2 and the clockwise path from $b'$ to $a'$ has color 4. Thus, $R_{4,2}(a')\cup \{f\}=R_{4,2}(b')$ and $x(a')+1=x(b')$.
\end{proof}


\begin{figure}[h]
\begin{minipage}{.52\linewidth}
\centerline{\includegraphics[scale=.5]{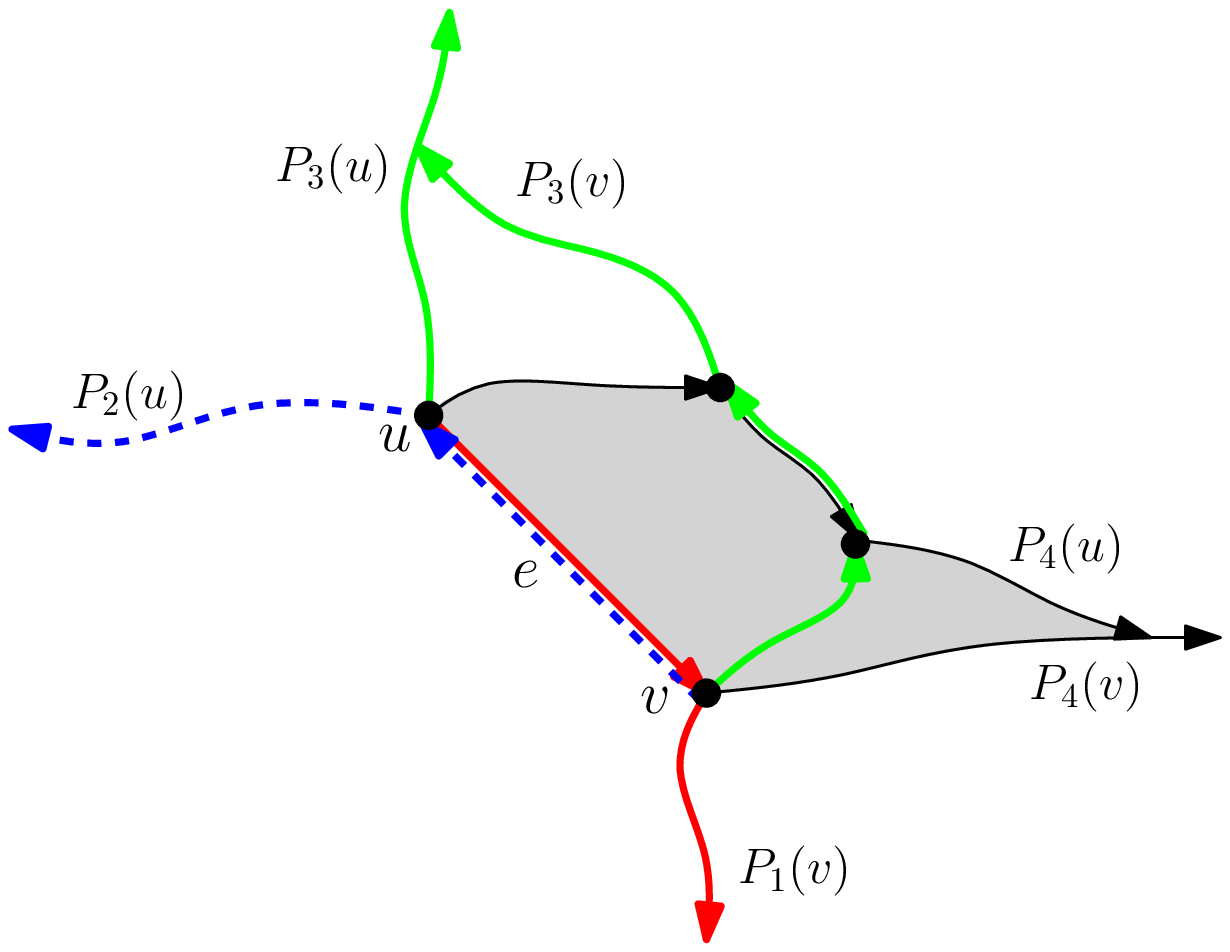}}
\caption{Edge $e=\{u,v\}$ in the proof of Theorem~\ref{thm:bentline}.}\label{fig:proof-drawing2}
\end{minipage}
\begin{minipage}{.47\linewidth}\vspace{1.3cm}
\centerline{\includegraphics[scale=.7]{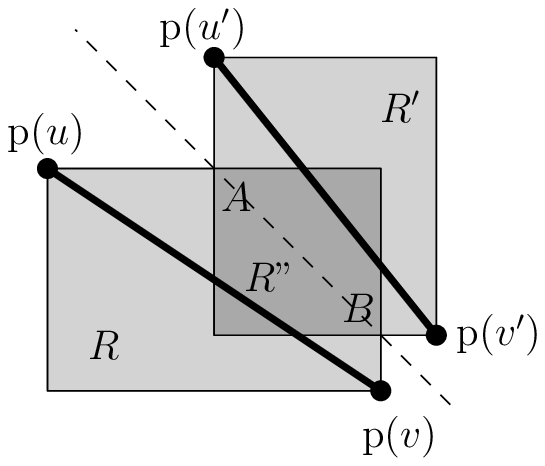}}\vspace{.5cm}
\caption{Rectangles $R,R'$ in the proof of Theorem~\ref{thm:straightline}.}\label{fig:proof-drawing3}
\end{minipage}
\end{figure}

Before embarking on the proof of  Theorem~\ref{thm:straightline}, let us first frame an easy consequence of Property (3) in Theorem~\ref{thm:bentline} (see Figure~\ref{fig:face-configuration}).
\begin{lem}\label{lem:empty-rectangle}
The orthogonal drawing satisfies the following property:
\begin{itemize}
\item[(4)] Each non-root edge $e=\{u,v\}$ is special for exactly one non-root face $f_e$.
Let $R$ be the rectangle with diagonal $[p(u),p(v)]$ (and sides parallel to the axes).
Then $f_e$ is characterized as the unique face of the embedding that contains $R$. Moreover, $u$ and $v$ are the only vertices in $R$ or on the boundary of $R$.
\end{itemize}
\end{lem}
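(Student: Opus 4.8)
The plan is to separate the statement into a combinatorial part (each non-root edge is special for exactly one non-root face, which defines $f_e$) and a geometric part (the rectangle $R$ sits inside $f_e$ and is otherwise empty). Both parts should follow quickly from Theorem~\ref{thm:bentline}, especially its Property~(3), once the incidences are counted and the two bounding staircases of a face are shown to be coordinate-monotone.

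First I would count incidences. Since $G$ has $2n$ edges and $n+2$ faces, of which $4$ are root-edges and $4$ are root-faces, there are $2n-4$ non-root edges and $n-2$ non-root faces. By Property~(3) of Theorem~\ref{thm:bentline} each non-root face has exactly two special edges, so there are exactly $2(n-2)=2n-4$ pairs $(f,e)$ with $e$ special for the non-root face $f$. Hence it suffices to show that a non-root edge is special for \emph{at most} one face, since equality of the two counts then forces each non-root edge to be special for exactly one non-root face. For the at-most-one claim I would combine the position of the bend $\pp(e)$ on the rectangle $R$ with the parity of the colors of $e$: by Lemma~\ref{lem:characterization-even-dual-Schnyder} the even-colored arc of $e$ carries a black face on its right and the odd-colored arc a white face. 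Matching this against the clockwise color patterns of special edges recorded in Property~(3) (clockwise colors $4,3$ or $2,1$ for black faces, and the corresponding shifts for white faces) shows that $e$ can match a special-edge pattern only for the face lying on the concave side of its bend, which automatically has the matching color. This yields ``at most one'', and the counting then gives the first sentence, defining $f_e$.

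Next I would prove $R\subseteq\overline{f_e}$. Assume $f_e$ is black and that $e=\{a,a'\}$ plays the role of $f_a$ (the white case and the role of $f_b$ are symmetric); put $u=a$, $v=a'$, so $\pp(a)$ is the top-left and $\pp(a')$ the bottom-right corner of $R$, with the bend at the top-right corner. Property~(3) describes the clockwise contour of $f_e$ as $e$ (tracing the top and right sides of $R$), then the staircase of ``left-down'' edges from $a'$ to $b$, then $f_b$, then the staircase of ``right-up'' edges from $b'$ to $a$. The key computation is that, using Lemma~\ref{lem:X} and the color directions, each ``left-down'' edge strictly decreases both coordinates while each ``right-up'' edge strictly increases both coordinates. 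Thus the first staircase is strictly decreasing and leaves $a'$ along the bottom side of $R$ before dropping to heights $y<y(a')$, and the second is strictly increasing and reaches $a$ along the left side of $R$, approaching from $x<x(a)$. Consequently the entire contour of $f_e$ runs along the sides of $R$ or strictly outside $R$ and never meets $\mathrm{int}(R)$; since $\mathrm{int}(R)$ is adjacent to $e$ on the $f_e$-side, connectedness of $\mathrm{int}(R)$ gives $\mathrm{int}(R)\subseteq f_e$, so $f_e$ is the unique face containing $R$.

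Finally, the emptiness assertion follows at once: $\mathrm{int}(R)$ carries no vertex because it lies in the face $f_e$, while the four sides of $R$ lie in the rows and columns of $a$ and $a'$, which by Property~(1) contain no other vertices; hence $u$ and $v$ are the only vertices in or on $R$. I expect the geometric step to be the main obstacle, namely verifying cleanly that the two bounding staircases are coordinate-monotone and hug the bottom and left sides of $R$ without protruding into its interior, and handling the degenerate cases in which one of the staircases is empty or shares a corner vertex of $R$; by comparison the parity bookkeeping for ``at most one face'' and the incidence count are routine.
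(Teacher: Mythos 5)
Your proof is correct and follows the route the paper intends: the paper states this lemma without proof, presenting it as ``an easy consequence of Property (3) in Theorem~\ref{thm:bentline}'' (with a pointer to Figure~\ref{fig:face-configuration}), and your argument — the incidence count plus the color-set/parity check for ``special for exactly one face'', and the monotonicity of the two staircases bounding the cross-shaped face to show the contour avoids the interior of $R$ — supplies precisely the details that derivation requires. The degenerate cases you flag (empty staircases, shared corner vertices) do not cause trouble since the staircases still stay in the closed region $\{x\leq x(a)\}\cup\{y\leq y(a')\}$, so the writeup is sound.
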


\begin{proof}[Proof of Theorem~\ref{thm:straightline}]
We have to prove that the straight-line drawing is planar. Let $e=\{u,v\}$ be a non-root edge of $G$. Lemma~\ref{lem:empty-rectangle} ensures that no vertex lies on the segment $[\pp(u),\pp(v)]$. We consider another non-root edge $e'=\{u',v'\}$ (with $u,v,u',v'$ distinct) and want to prove that the segments $[\pp(u),\pp(v)]$, $[\pp(u'),\pp(v')]$ do not intersect. We consider the rectangles $R,R'$ with diagonal $[\pp(u),\pp(v)]$ and $[\pp(u'),\pp(v')]$ respectively (and sides parallel to the axes) and their intersection $R''$.
 If $R''=\emptyset$ the segments $[\pp(u),\pp(v)]$, $[\pp(u'),\pp(v')]$ do not intersect, hence we consider the case $R''\neq \emptyset$.
 Note that the boundaries of $R$ and $R'$ can not intersect in $4$ points, otherwise
the representations of the edges $e$ and $e'$ in the orthogonal drawing would have
 to intersect.
 And by Lemma~\ref{lem:empty-rectangle}, the rectangle $R''$ contains none of the points $\pp(u),\pp(v),\pp(u'),\pp(v')$.
 By an easy treatment of the possible cases,
 this implies that $R$ and $R'$ intersect in $2$ points, called
  $A,B$, and the configuration has to be as in Figure~\ref{fig:proof-drawing3} (up to
  a rotation by a multiple of $\pi/2$).
  It clearly appears on Figure~\ref{fig:proof-drawing3} that the points $\pp(u),\pp(v)$ are both on the same side of the line $(A,B)$, while the points $\pp(u'),\pp(v')$ are both on the other side; see Figure~\ref{fig:proof-drawing3}. Thus, the segments $[\pp(u),\pp(v)]$ and $[\pp(u'),\pp(v')]$ are on different sides of $(A,B)$ and do not intersect.
\end{proof}

\subsection{Placing the vertices using equatorial lines}\label{sec:eq_line}
In this subsection, we show that our placement $\pp(v)$ of the vertices can be interpreted, and computed, by considering the so-called \emph{equatorial lines} rather than using face-counting operations. This point of view has the advantage of providing a linear time algorithm for computing the placements of all the vertices. Moreover, it highlights the close relation
between our vertex placement and a straight-line drawing algorithm for simple quadrangulations recently obtained by
Barri\`ere and Huemer~\cite{Barriere-Huemer:4-Labelings-quadrangulation}\footnote{There is also a formulation of the algorithm \cite{Barriere-Huemer:4-Labelings-quadrangulation} in terms of face-counting operations, 
but the formulation with equatorial lines reveals better the relation with our algorithm.}.

\fig{width=\linewidth}{barrieretop}{Left: A 4-regular plane graph $G$ of mincut $4$ superimposed
with its dual quadrangulation $Q$ and endowed with an even clockwise labelling.  Right: the two equatorial lines $L_1$ and $L_4$ drawn on the quadrangulation $Q$ and on the 4-regular graph $G$.}

\fig{width=\linewidth}{barrierebottom}{Orhtogonal drawing of a 4-regular graph $G$, and straight-line drawing for the quadrangulation $Q=G^*$ given by  Barri\`ere and Huemers algorithms~\cite{Barriere-Huemer:4-Labelings-quadrangulation}. In the middle, the two vertex placements are superimposed: each internal vertex of $Q$ is placed at  the ``centre of the cross'' of the corresponding face of~$G$.}

Let $G$ be a vertex-rooted $4$-regular plane graph of mincut $4$, and let $(T_1^*,\ldots,T_4^*)$ be an even regular decomposition. Let $Q=G^*$ be the dual quadrangulation and let $(F_1,\ldots,F_4)=\chi^{-1}(T_1^*,\ldots,T_4^*)$ be the dual Schnyder decomposition. The duality between 
$(T_1^*,\ldots,T_4^*)$ and $(F_1,\ldots,F_4)$ is best seen in terms of clockwise labellings as  illustrated in Figure~\ref{fig:barrieretop} (left). Let $v_1^*,\ldots,v^*_4$ be the neighbors of the root-vertex of $G$, and let $f_1,\ldots,f_4$ be the corresponding internal faces of $Q$.

For $i\in\{1,2,3,4\}$, we denote by $Q_{i}$ the quadrangulation $Q$ with internal edges colored according to the two forests $F_i$ and $F_{i+2}$ of the Schnyder decomposition. An internal corner of $Q_{i}$ is said \emph{bicolored} if the two incident edges are of different colors (one of color $i$ and the other of color $i+2$). 
It is easily seen that each internal vertex and each internal face $f\neq f_{i+1},f_{i+3}$ of $Q$ has exactly two bicolored corners. 
Moreover, it is shown for instance in~\cite{Fraysseix:Topological-aspect-orientations,Felsner:Baxter-family} that by connecting the bicolored corners of each internal face by a segment, ones creates a line $L_i$, called \emph{equatorial line}, starting in the face $f_{i+1}$, ending in the face $f_{i+3}$, passing by each internal vertex and each internal face of $Q$ exactly once, and such that the edges of color $i$ and $i+2$ are respectively on the the left and on the right of $L_i$. The equatorial lines $L_1$ and $L_4$ are represented in Figure~\ref{fig:barrierebottom}. In the algorithm by Barri\`ere and Huemer~\cite{Barriere-Huemer:4-Labelings-quadrangulation}, the $x$-coordinate (resp. $y$-coordinate) of any internal vertex $v$ of $Q$ is equal to its rank along the equatorial line $L_1$ (resp. $L_4$). The straightline drawing obtained by applying the algorithm~\cite{Barriere-Huemer:4-Labelings-quadrangulation} of the quadrangulation $Q$ in Figure~\ref{fig:barrieretop} is represented Figure~\ref{fig:barrierebottom}. It is easily seen that the equatorial lines $L_1,L_4$ (hence all the coordinates) can be computed in linear time. 

In order to establish the link with our algorithm, we need some notations. 
Let $f$ be a non-root face of $G$. By Property (3) in Theorem~\ref{thm:bentline}, the face $f$ has two special edges  $f_a=\{a,a'\}$ and $f_b=\{b,b'\}$. If the face $f$ is black (resp. white), we denote $f_x^-=a$, $f_x^+=b$, $f_y^-=a'$, $f_y^+=b'$ (resp. $f_x^-=b'$, $f_x^+=a'$, $f_y^-=a$, $f_y^+=b$); see Figure~\ref{fig:face-configuration}. We consider the placement $\pp(v)=(x(v),y(v))$ of the non-root vertices of $G$ defined in the previous subsection. By Property (3) in Theorem~\ref{thm:bentline},  $x(f_x^-)+1=x(f_x^+)$ and $y(f_y^-)+1=y(f_y^+)$. We denote $x(f)=x(f_x^-)$ and $y(f)=y(f_y^-)$.  

For $i\in\{1,2,3,4\}$, we denote by $G_{i}$ the $4$-regular plane graph $G$ where non-root edges are colored according to the two trees $T_i,T_{i+2}$ of the regular decomposition. On $G_i$ the equatorial line $L_i$ defined above is a line starting at $v_{i+1}^*$, ending at $v_{i+3}^*$, passing by each non-root face and each non-root vertex of $G$ exactly once, and such that the edges of color $i$ and $i+2$ are respectively on the right and on the left of $L_i$; see Figure~\ref{fig:barrieretop}. 
It easy to check (see Figure~\ref{fig:face-configuration}) that for any face internal $f$ the equatorial line $L_1$ passes consecutively by the vertices $f_x^-$ and $f_x^+$. Moreover, the $x$-coordinates of the vertices $f_x^-$ and $f_x^+$ are also consecutive: $x(f_x^-)+1=x(f_x^+)$. Thus, the $x$-coordinates of the vertices of $G$ in our algorithms are equal to their rank along the equatorial line $L_1$. Similarly, the $y$-coordinates of the  vertices of $G$ in our algorithms are equal to their rank along $L_4$. Hence these coordinates can be computed in linear time. Moreover, if $f$ is a non-root face of $Q$ and $v$ is corresponding vertex of $Q$, the line $L_1$ (resp. $L_4$) passes through $v$ immediately after passing though $f_x^-$ (resp. $f_y^-$). Thus, the $x$-coordinate (resp. $y$-coordinate) of $v$ in the algorithm of Barri\`ere and Huemer~\cite{Barriere-Huemer:4-Labelings-quadrangulation} is equal to $x(f)$ (resp. $y(f)$). Graphically, this means that the placement of vertices of the quadrangulation $Q$ given by~[1] can be superimposed to our straight-line drawing of $G$ in such a way that any vertex $v$ of $Q$ is at the centre of the ``cross'' (see Figure~\ref{fig:face-configuration}) of the corresponding face of $G$.  Figure~\ref{fig:barrierebottom} illustrates this property.

\subsection{Reduction of the grid size}\label{sec:reduction}~\\
In this subsection, we present a way of reducing the grid size while keeping the drawings planar. 
We consider the placement $\pp(v)$ of the non-root vertices of the 4-regular graph $G$. For a non-root face $f$ we adopt the notations $f_x^-,f_x^+,f_y^-,f_y^+,x(f),y(f)$ of Subsection~\ref{sec:eq_line}.
A face $f$ such that the vertices $f_x^-,f_x^+,f_y^-,f_y^+$ are not all distinct is called \emph{non-reducible}.
A face  $f$ such that the vertices $f_x^-,f_x^+,f_y^-,f_y^+$ are all distinct is called \emph{partly reducible} if these are the only vertices around $f$, and \emph{fully reducible} otherwise. The drawing in Figure~\ref{fig:tetravalent-optimized} has 1 partly reducible face and 4 fully reducible faces. A \emph{reduction choice} is a pair $(X,Y)$ with $X,Y\subseteq \{0,\ldots,n-3\}$ such that $X$ contains the $x$ coordinates of the fully reducible faces together with the  $x$ coordinates of a subset of the partly reducible faces, while  $Y$ contains the
 $y$ coordinates of the fully reducible faces together with the  $y$ coordinates of the complementary subset of partly reducible faces.

We now prove that deleting the columns and lines corresponding to any reduction choice gives a \emph{planar} orthogonal drawing, and a \emph{planar} straight-line drawing.
More precisely, given a reduction choice $(X,Y)$, we define new coordinates $\pp'(v)=(x'(v),y'(v))$ for any non-root vertex $v$, where $x'(v)=x(v)-|\{0,\ldots,x(v)-1\}\cap X|$ and $y'(v)=y(v)-|\{0,\ldots,y(v)-1\}\cap Y|$.
Clearly, $x(u)<x(v)$ implies  $x'(u)\leq x'(v)$, and $y(u)<y(v)$ implies  $y'(u)\leq y'(v)$ for any vertices $u,v$. Thus the orthogonal drawing of $G$ with placement $\pp'$ is well defined  (the rays from $\pp'(u)$ and $\pp'(v)$ intersect each other for any edge $e=\{u,v\}$). We call \emph{reduced} the orthogonal and straight-line drawings obtained with the the new placement $\pp'$ of vertices. These drawings are represented in Figure~\ref{fig:tetravalent-optimized}.

\begin{prop}\label{prop:optimization}
For any reduction choice  $(X,Y)$ for $G$, the reduced orthogonal drawing of $G\backslash v^*$ is planar, and every edge has exactly one bend. Moreover, denoting by $\widetilde{G}$ the plane graph obtained
from $G$ by collapsing each face of degree $2$ in $G$, the reduced straight-line drawing of $\widetilde{G}\backslash v^*$ is planar.
\end{prop}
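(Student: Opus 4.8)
The plan is to deduce both statements from the two structural facts that powered Theorems~\ref{thm:bentline} and~\ref{thm:straightline}: that every non-root edge spans a rectangle of positive width and height (so its bent-edge keeps exactly one bend), and that these rectangles are empty in the sense of Lemma~\ref{lem:empty-rectangle}. Once these are re-established for the reduced placement $\pp'$, planarity of both reduced drawings will follow by replaying the crossing arguments of those theorems: the colour of each arc, hence the shape of each bent-edge and each relevant region $R_{i,i+2}(\cdot)$, is a purely combinatorial datum that the reduction does not touch, and the only metric input to those arguments is the comparison of coordinates, which survives because $\phi:x\mapsto x'$ and $\psi:y\mapsto y'$ are \emph{weakly} increasing ($x(u)<x(v)\Rightarrow x'(u)\le x'(v)$, and likewise in $y$). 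So the real work is to rule out the new phenomenon introduced by the reduction, namely degeneracy.

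First I would record the combinatorial meaning of a reduction choice. By the equatorial-line description of Section~\ref{sec:eq_line}, $f\mapsto x(f)$ is a bijection from non-root faces onto $\{0,\dots,n-3\}$ (and likewise $f\mapsto y(f)$), and deleting the column indexed $x(f)$ merges \emph{exactly} the pair $\{f_x^-,f_x^+\}$, since $x(f_x^-)+1=x(f_x^+)$, while preserving the relative order of all other columns; symmetrically for rows. Thus a reduction choice $(X,Y)$ identifies, in the $x$-direction, precisely the pairs $\{f_x^-,f_x^+\}$ with $x(f)\in X$, and in the $y$-direction precisely the pairs $\{f_y^-,f_y^+\}$ with $y(f)\in Y$. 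The constraints defining a reduction choice (fully reducible faces contribute to both $X$ and $Y$, each partly reducible face to exactly one, non-reducible faces to neither) are exactly what is needed to keep a face from being collapsed in both directions unless it has more than its four cross-vertices on its boundary to absorb the collapse.

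The hard part will be the non-degeneracy statement: for every non-root edge $e=\{u,v\}$ one has $x'(u)\ne x'(v)$ and $y'(u)\ne y'(v)$, and no two distinct vertices are sent to the same point. The guiding observation is that a merge only ever identifies an \emph{opposite} pair $\{f_x^-,f_x^+\}$ (or $\{f_y^-,f_y^+\}$), which is never joined by an edge, these being opposite arms of the cross of $f$ in Property~(3) of Theorem~\ref{thm:bentline}. I would make this quantitative through the rank reading $x'(v)=|\{g:\,x(g)<x(v),\ x(g)\notin X\}|$, so that $x'(u)\ne x'(v)$ amounts to exhibiting, for each edge, at least one face whose $x$-coordinate lies in the interval between $x(u)$ and $x(v)$ and is \emph{not} reduced in $x$. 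Here I would use that the two endpoints of a special edge sit at non-adjacent columns (so the interval has length at least two, since each column carries a single vertex by Property~(1) of Theorem~\ref{thm:bentline}), together with Lemma~\ref{lem:empty-rectangle}, which forces the cross-vertices of every face other than $f_e$ lying in that column range to be outside $R_e$ and thereby prevents them from all being simultaneously reduced. The coincidence of two vertices is excluded the same way, now requiring a full collapse in both coordinates, which the partition condition on $(X,Y)$ forbids. Controlling this \emph{entire} coordinate interval, rather than a single column, is the delicate point of the argument.

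Granting non-degeneracy, I would finish as follows. For the orthogonal drawing I replay the proof of Theorem~\ref{thm:bentline}: each $\pp'(e)$ is still a genuine point with one bend, and the contradiction reached there (an edge lying at once in $R_{1,3}(u)$ and strictly inside $R_{3,1}(u)$) is combinatorial, so it survives verbatim under the weakly monotone coordinate comparisons afforded by Lemma~\ref{lem:ordered-by-inclusion}. For the straight-line drawing of $\widetilde{G}\setminus v^*$, I note that degree-$2$ faces are non-reducible and therefore untouched, so the passage to $\widetilde{G}$ is unaffected, and I replay the proof of Theorem~\ref{thm:straightline}, whose two-intersection-point analysis of the rectangles $R,R'$ rests only on the reduced empty-rectangle property now in hand. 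Together these give planarity of the reduced orthogonal and straight-line drawings, completing the proof.
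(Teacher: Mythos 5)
Your high-level plan (re-establish one bend per edge and the empty-rectangle property for $\pp'$, then deduce planarity) matches the paper's, and you correctly single out non-degeneracy as the delicate point. But the two central arguments are not carried out, and the routes you sketch for them would not go through as stated.

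First, the non-degeneracy of edges. You propose to exhibit directly, for each edge $e=\{u,v\}$ with $x(u)<x(v)$, a column in $\{x(u),\dots,x(v)-1\}$ not in $X$, using that special-edge endpoints are at least two columns apart together with Lemma~\ref{lem:empty-rectangle}. The empty-rectangle property says nothing about the reducibility of the faces $g_c$ with $x(g_c)=c$ for $c$ strictly inside that interval, so it does not prevent \emph{all} of those columns from lying in $X$; I do not see how your argument identifies a surviving column. The paper's Lemma~\ref{lem:remains-bent} instead runs a descent: take a collapsing edge $e$ with $x(v)-x(u)$ minimal; since the column $x(f_e)$ is deleted, $f_e$ is reducible, hence its four cross-vertices are distinct, hence $v\neq (f_e)_x^+$; one then exhibits a \emph{non-special} edge of $f_e$ incident to $(f_e)_x^+$ whose column interval is strictly shorter and also entirely deleted, contradicting minimality. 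This minimality/propagation idea is the missing ingredient; without it (or some equivalent induction) the claim is not established.

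Second, planarity of the reduced orthogonal drawing. You assert that the crossing argument of Theorem~\ref{thm:bentline} ``survives verbatim under the weakly monotone coordinate comparisons.'' It does not: that proof uses Property (1) (each row and column contains exactly one vertex) to rule out a vertex lying on a segment, and uses strict chains such as $y(v)<y(u')<y(u)$ to place $u'$ weakly inside the relevant regions. After reduction, distinct vertices do share rows and columns (that is the whole point of the reduction), so these steps fail and the case analysis changes. The paper avoids replaying that proof altogether: it deletes the columns of $X$ and rows of $Y$ one at a time and shows a first crossing is impossible, because by Corollary~\ref{cor:face-config} the only candidate collision is between the two vertical rays out of $f_x^-$ and $f_x^+$ of a common face $f$, which would force $x(f)\in X$ \emph{and} $y(f)\in Y$, hence $f$ fully reducible, and the extra boundary vertices of a fully reducible face then keep those rays disjoint. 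Note that this is exactly where the partition constraint on $(X,Y)$ for partly reducible faces is used; your proposal mentions that constraint but never invokes it in the planarity argument. Your treatment of the straight-line part (orthogonal planarity plus the reduced empty-rectangle property, with degree-$2$ faces being non-reducible) is consistent with the paper, but it rests on the two steps above.
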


\fig{width=\linewidth}{tetravalent-optimized}{Reduced orthogonal and straight-line drawings.}



The rest of this subsection is devoted to the proof of Proposition~\ref{prop:optimization}.
\begin{lem}\label{lem:remains-bent}
Let $u,v$ be adjacent non-root vertices of $G$. If $x(u)<x(v)$, then $x'(u)< x'(v)$. Similarly, if $y(u)<y(v)$, then $y'(u)< y'(v)$. In particular the reduced orthogonal drawing has exactly one bend per edge.
\end{lem}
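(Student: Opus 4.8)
The plan is to turn the inequality into a choice-independent statement about faces and then prove it by induction on the horizontal span $g:=x(v)-x(u)$ of the edge $e=\{u,v\}$. Expanding the definition of $x'$ gives $x'(v)-x'(u)=(x(v)-x(u))-|\{x(u),\dots,x(v)-1\}\cap X|=|\{x(u),\dots,x(v)-1\}\setminus X|$, so (using that the text already records $x'(u)\le x'(v)$) the inequality $x'(u)<x'(v)$ amounts to finding some $c\in\{x(u),\dots,x(v)-1\}$ with $c\notin X$. Recall from Section~\ref{sec:eq_line} that the equatorial line $L_1$ visits the non-root vertices and non-root faces alternately, with $x(w)$ the rank of $w$ along $L_1$ and with $x(f)=x(f_x^-)$, $x(f_x^+)=x(f)+1$; hence $c\mapsto$ (the unique non-root face $f_c$ with $x(f_c)=c$) is a bijection from $\{0,\dots,n-3\}$ onto the non-root faces. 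Since $X$ contains only the $x$-coordinate of \emph{reducible} (fully or partly reducible) faces, while a non-reducible face never contributes to $X$, it suffices to prove the purely combinatorial claim $P(e)$: \emph{among $f_{x(u)},\dots,f_{x(v)-1}$ at least one face is non-reducible.} The symmetric argument then handles the $y$-coordinates.

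For the base case $g=1$ I would show that the single range face $f_{x(u)}$ is forced to be non-reducible. It has $f_x^-=u$ and $f_x^+=v$ (the vertices of ranks $x(u)$ and $x(u)+1$), and a short computation with Property~(3) of Theorem~\ref{thm:bentline} shows that $e$ is exactly the special edge of $f_{x(u)}$, i.e.\ $f_{x(u)}=f_e$ in the notation of Lemma~\ref{lem:empty-rectangle} and $x(f_e)=x(u)$. Now each of the two special edges of a non-root face joins one of $f_x^-,f_x^+$ to one of $f_y^-,f_y^+$; hence the edge $e=\{f_x^-,f_x^+\}$ can be special only if some $x$-role vertex coincides with some $y$-role vertex, i.e.\ only if the four vertices $f_x^-,f_x^+,f_y^-,f_y^+$ are not all distinct. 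Thus $f_{x(u)}$ is non-reducible, giving $P(e)$.

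For the inductive step $g\ge 2$ I would again consider the face $f_e$ for which $e$ is special. The same computation as above shows $x(f_e)\in\{x(u),\dots,x(v)-1\}$, so if $f_e$ is non-reducible we are done. Otherwise $f_e$ is reducible, so its four special vertices are distinct and, by Property~(3), the boundary of $f_e$ contains next to $e$ a monotone staircase path (the left-down path from $v$ when $e$ is the right-down special edge, the right-up path from $u$ when $e$ is the left-up special edge) along which the $x$-coordinate is strictly monotone and stays strictly inside the open interval $(x(u),x(v))$. Let $w$ be the first vertex of this path; then $\{v,w\}$ (resp.\ $\{u,w\}$) is an edge of $G$ between two non-root vertices whose span is strictly smaller than $g$, so by induction its range contains a non-reducible face, and that range is contained in $\{x(u),\dots,x(v)-1\}$. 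This proves $P(e)$, hence the lemma; the ``one bend per edge'' conclusion is then immediate, since $x'(u)\neq x'(v)$ together with the symmetric $y'(u)\neq y'(v)$ keeps every bent-edge a genuine $L$-shape.

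The hard part will be the bookkeeping behind the phrases ``$e$ is the special edge of $f_{x(u)}$'' and ``the correct monotone boundary path'': for each of the four configurations (black or white $f_e$, and which of its two special edges $e$ is) one must pin down the exact positions of $f_x^-,f_x^+,f_y^-,f_y^+$ relative to $u,v$, using Property~(3) of Theorem~\ref{thm:bentline} and the colour-duality of Lemma~\ref{lem:alternative_def}, and check that the chosen staircase indeed has $x$-coordinate trapped in $(x(u),x(v))$. I expect that once the black, right-down case is carried out in detail the other three follow from the symmetry $(\mathrm{right},\mathrm{down},\mathrm{left},\mathrm{up})\leftrightarrow(\mathrm{up},\mathrm{right},\mathrm{down},\mathrm{left})$ already used in the proof of Theorem~\ref{thm:bentline}.
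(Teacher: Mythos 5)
Your proposal is correct and follows essentially the same route as the paper's proof: both arguments induct on the span $x(v)-x(u)$, identify via Lemma~\ref{lem:empty-rectangle} the unique face $f_e$ for which $e$ is special, use the distinctness of $f_x^-,f_x^+,f_y^-,f_y^+$ on a reducible face to produce a boundary edge of strictly smaller span with nested column range, and recurse. The only (harmless) differences are that you phrase it as a forward induction proving the slightly stronger, choice-independent claim that the range $\{x(u),\ldots,x(v)-1\}$ always contains a non-reducible face, whereas the paper runs a minimal-counterexample argument on the weaker statement ``not all columns in the range lie in $X$''; the four-case bookkeeping you defer does check out (though for a white face $f_e$ the correct staircase for the $x$-claim must be re-derived rather than obtained verbatim from the black case by the colour symmetry, since that symmetry exchanges the $x$- and $y$-roles).
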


\begin{proof}
We show the property for the $x$ coordinates. Suppose for contradiction that there exists an edge $e=\{u,v\}$ with $x(u)<x(v)$ and $x'(u)=x'(v)$. We can choose $e$ such that the difference $x(v)-x(u)$ is minimal. Clearly  $x'(u)=x'(v)$ means that all the columns between $\pp(u)$ and $\pp(v)$ have been erased: $\{x(u),x(u)+1,\ldots,x(v)-1\}\subseteq X$.
By Lemma~\ref{lem:empty-rectangle}, we know that $e$ is the special edge of a face $f$. Since $x(u)<x(v)$, one has either $u=f_x^-$ or $v=f_x^+$ (see Figure~\ref{fig:face-configuration}).
We first assume that $u=f_x^-$. In this case, $x(f)=x(u)$ is in $X$, thus the face $f$ is either partly or fully reducible. Hence, $v\neq f_x^+$. We consider the non-special edge $e'=\{f_x^+,v'\}$ around $f$. Since $x(u)<x(f_x^+)\leq x(v')-1\leq x(v)-1$ and $\{x(u),\ldots,x(v)-1\}\subseteq X$, one gets  $x'(f_x^+)=x'(v')$ in contradiction with our minimality assumption on $e$. The alternative asumption  $v=f_x^+$ also leads to a contradiction by a similar argument.
\end{proof}

Lemma~\ref{lem:remains-bent} gives the following Corollary illustrated in Figure~\ref{fig:face-reduction}.
\begin{cor}\label{cor:face-config}
The following property holds after deletion of any subset of columns in $X$ and any subset of lines in  $Y$.
\begin{itemize}
\item[(3')] Let $f$ be a non-root face of $G$ and $f_a=\{a,a'\}$, $f_b=\{b,b'\}$ be its special edges. If $f$ is black the bent-edges in clockwise direction are as follows:  the special bent-edge $\{a,a'\}$ is right-down, the edges from $a'$ to $b$  are left-down, the special bent-edge $\{b,b'\}$ is left-up, the edges from  $b'$ to $a$ are right-up.  The white faces satisfy the same property with \emph{right, down, left, up} replaced by \emph{up, right, down, left}.

 Moreover, for black (resp. white) faces, the coordinates $x'',y''$ of the point after the partial reduction satisfy $x''(a)+\eps_x=x''(b)$, where $\eps_x=0$ if the column $x(f)$ has been deleted and 1 otherwise. Similarly $y''(a')+\eps_y=y''(b')$ (resp. $y''(a)-\eps_y=y''(b)$), where $\eps_y=0$ if the line $y(f)$ has been deleted and 1 otherwise.
\end{itemize}
\end{cor}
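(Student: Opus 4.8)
The plan is to read Property (3') as a consequence of Property (3) of Theorem~\ref{thm:bentline}, the only extra ingredient being Lemma~\ref{lem:remains-bent}, which guarantees that the coordinate comparisons used to describe each face survive the reduction. The first thing I would do is transfer Lemma~\ref{lem:remains-bent} (stated for the full reduction choice $(X,Y)$, with coordinates $x',y'$) to an arbitrary partial reduction, i.e.\ to deleting subsets $X'\subseteq X$ of columns and $Y'\subseteq Y$ of lines, with resulting coordinates $x'',y''$. For adjacent non-root vertices $u,v$ with $x(u)<x(v)$, the number of deleted columns in $\{x(u),\ldots,x(v)-1\}$ is at most as large for $X'$ as for $X$, so $x''(v)-x''(u)\geq x'(v)-x'(u)$; by Lemma~\ref{lem:remains-bent} the right-hand side is at least $1$, whence $x''(u)<x''(v)$, and symmetrically $y''(u)<y''(v)$ whenever $y(u)<y(v)$. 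Thus \emph{every} partial reduction preserves the strict $x$-order and the strict $y$-order of the two endpoints of each non-root edge.

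Next I would observe that the type of a bent-edge $\{u,v\}$ (down-left, down-right, up-left, up-right) is already fixed by the colors of its two arcs: Theorem~\ref{thm:bentline} shows that the color of $(u,e)$ alone determines whether $\pp(v)$ lies below, to the left of, above, or to the right of $\pp(u)$. The reduction alters neither these colors nor, by the previous paragraph, the strict $x$- and $y$-orderings of adjacent endpoints. Hence in the reduced drawing the ray in direction $i$ from the (reduced) position of $u$ still meets the ray in direction $j$ from the position of $v$ in a single point, and the bent-edge keeps exactly its former type. Consequently the clockwise sequence of bent-edges around each black (resp.\ white) face is the one listed in Property (3), which establishes the qualitative half of Property (3').

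Finally, for the coordinate relations I would substitute the base relations of Property (3) into the definition $x''(w)=x(w)-|\{0,\ldots,x(w)-1\}\cap X'|$. For a black face Property (3) gives $x(b)=x(a)+1$ with $x(a)=x(f)$, so the interval $\{x(a),\ldots,x(b)-1\}$ reduces to the single value $x(f)$; therefore $x''(b)-x''(a)$ equals $1$ minus the number of deleted columns in that interval, that is $1$ if $x(f)\notin X'$ and $0$ if $x(f)\in X'$. This is precisely $\eps_x$, so $x''(a)+\eps_x=x''(b)$, and the same computation applied to $y(b')=y(a')+1$ yields $y''(a')+\eps_y=y''(b')$; the white case follows by the analogous substitution. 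I do not expect a genuine difficulty here: the only point requiring care is the very first step, namely checking that Lemma~\ref{lem:remains-bent}, which is formulated for the full choice $(X,Y)$, dominates every partial reduction $X'\subseteq X,\ Y'\subseteq Y$, after which Property (3') is an immediate rewriting of Property (3).
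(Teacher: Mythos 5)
Your proposal is correct and follows the route the paper intends: the paper offers no explicit proof, simply asserting that Lemma~\ref{lem:remains-bent} (combined with Property~(3) of Theorem~\ref{thm:bentline}) yields the corollary, and your argument fills in exactly the implicit steps — the monotonicity observation that $x''(v)-x''(u)\geq x'(v)-x'(u)\geq 1$ for $X'\subseteq X$, the preservation of bent-edge types, and the arithmetic giving $\eps_x,\eps_y$. No gaps.
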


\fig{scale=.55}{face-reduction}{Effect of reduction on a fully reducible face (left) and a non-reducible face (right).}


\begin{proof}[Proof of Proposition~\ref{prop:optimization}] We first prove the planarity of the orthogonal drawing.
We call \emph{reduction process} the fact of deleting the columns in $X$ and lines in $Y$ one by one (in any order). We want to prove that there is no crossing at any time of the reduction process. Suppose the contrary and consider the first crossing. We suppose by symmetry that it occurs when deleting a column. In this case, it has to be that two vertical segments of the orthogonal drawing are made to superimpose by the column deletion. Just before the collision the two vertical segments had to be part of a common face $f$.
By Corollary~\ref{cor:face-config} the only possible collision is between the vertical rays out of the  vertices $f_x^-$ and $f_x^+$ (indeed around $f$ the vertical segments that were initially strictly to the left of $f_x^-$ remain so, and the vertical segments that were initially strictly to the right of $f_x^+$ remain so, see Figure~\ref{fig:face-reduction}). However, for the vertical rays out of  $f_x^-$ and $f_x^+$ to collide it is necessary that $x(f)\in X$ (so that the rays get to the same vertical line) and that $y(f)\in Y$ (since one of the ray is below $f_y^-$ and the other is above $f_y^+$). Thus, the face $f$ is fully reducible. But in this case it is clear that the  vertical rays out of  $f_x^-$ and $f_x^+$  cannot collide (because either one is strictly below $f_y^-$ or the other is strictly above $f_y^+$, see Figure~\ref{fig:face-reduction}). We reach a contradiction, hence the reduced orthogonal drawing is planar.

The proof that the reduced straight-line drawing is non-crossing is identical to the proof given in Section~\ref{sec:drawing} for the non-reduced straight-line drawing (proof of Theorem~\ref{thm:straightline}). Indeed this proof is entirely based on the non-crossing property of the orthogonal drawing and Property (4) given in Lemma~\ref{lem:empty-rectangle}, which still holds for the reduced orthogonal drawing because of Corollary~\ref{cor:face-config}. 
\end{proof}
\smallskip

\subsection{Analysis of the grid-reduction for a random instance}~\\
In this subsection, we analyze the typical grid size given by our drawing algorithms after the reduction step (presented in Subsection \ref{sec:reduction}) for a large, uniformly random, 4-regular graph endowed with an even regular decomposition. 

A sequence $E_n$ of random events is said to have \emph{exponentially small probability} if $\mathbb{P}(E_n)=O(e^{-cn})$ for some $c>0$. A sequence of real random variables $(X_n)_{n\in\NN}$ is said to be \emph{strongly concentrated} around $\al\, n$ (for a real number $\al\neq 0$) if for all $\eps>0$, the event $\{X_n\notin[\al(1-\eps)n,\al(1+\eps)n]\}$ has exponentially small probability.

Let $n$ be a positive integer. We denote by  $\mR_n$ the set of pairs $(G,R)$ where $G$ is a vertex-rooted $4$-regular plane graph of mincut $4$ with $n$ vertices, and $R$ is an even regular decomposition of $G$. We denote by $R_n$ a uniformly random choice in $\mR_n$. 
We want to study the grid size of the drawing of $R_n$. As reduction-choices for $R_n$ we consider the ones that are \emph{balanced}, that is, such that the numbers of partly reducible faces in the two sets $X$ and $Y$ differ by at most $1$. Thus the grid size is determined by the numbers of number of partially and fully reducible faces in $R_n$. 
 We now state the main result of this section.

\begin{prop}\label{prop:reduc}
The numbers of partly and fully reducible faces of the uniformly random even regular decomposition $R_n$ are strongly concentrated around $n/16$ and $3n/16$ respectively. Consequently, for a balanced reduction choice, the grid-size after reduction is strongly concentrated around $25n/32\times 25n/32$.
\end{prop}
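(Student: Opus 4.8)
The plan is to reduce the grid-size statement to a concentration estimate for the two counts $N_{\mathrm{part}}$ and $N_{\mathrm{full}}$ of partly and fully reducible non-root faces, and then to establish those two estimates by an analysis of the underlying random combinatorial structure. For the reduction, note that a \emph{balanced} reduction choice $(X,Y)$ deletes $|X|=N_{\mathrm{full}}+\lceil N_{\mathrm{part}}/2\rceil$ columns and $|Y|=N_{\mathrm{full}}+\lfloor N_{\mathrm{part}}/2\rfloor$ lines from the $(n-2)\times(n-2)$ grid (each reducible face contributing a distinct coordinate, as is visible from the equatorial-line description of Subsection~\ref{sec:eq_line}), so the reduced width and height are $(n-2)-|X|$ and $(n-2)-|Y|$. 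If $N_{\mathrm{full}}$ and $N_{\mathrm{part}}$ are strongly concentrated around $3n/16$ and $n/16$, then $|X|$ and $|Y|$ are concentrated around $3n/16+n/32=7n/32$, hence both reduced dimensions are concentrated around $n-7n/32=25n/32$; since a finite intersection of events with exponentially small complementary probability again has exponentially small complementary probability, the grid size is concentrated around $25n/32\times 25n/32$. Thus everything reduces to the two stated concentration results.

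The first substantive step is a purely local characterization of the reducibility type of a non-root face $f$. By Property~(3) of Theorem~\ref{thm:bentline}, writing $\ell_1,\ell_2$ for the numbers of non-special edges on the two sides of $f$ between its special edges $f_a=\{a,a'\}$ and $f_b=\{b,b'\}$, one has $\deg(f)=\ell_1+\ell_2+2$; the four vertices $f_x^-,f_x^+,f_y^-,f_y^+$ are all distinct if and only if $\ell_1\ge 1$ and $\ell_2\ge 1$, among which $f$ is \emph{partly reducible} exactly when $\ell_1=\ell_2=1$ (degree $4$) and \emph{fully reducible} exactly when moreover $\ell_1+\ell_2\ge 3$. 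Dually (Lemma~\ref{lem:characterization-even-dual-Schnyder}), $f$ corresponds to an inner vertex $v$ of the quadrangulation $Q=G^*$, and $(\ell_1,\ell_2)$ is precisely the cyclic splitting of the incoming edges of $v$ by its two outgoing edges in the associated $2$-orientation. Consequently
$N_{\mathrm{part}}=\sum_{v}\mathbf{1}[\ell_1(v)=\ell_2(v)=1]$ and $N_{\mathrm{full}}=\sum_{v}\mathbf{1}[\ell_1(v)\ge 1,\ \ell_2(v)\ge 1,\ \ell_1(v)+\ell_2(v)\ge 3]$ are \emph{additive functionals} of the $2$-orientation, each a sum over inner vertices of an indicator depending only on the local configuration at that vertex.

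To obtain the means and the concentration I would transport the random object $R_n\in\mR_n$ through a bijective encoding as in~\cite{Bernardi-Fusy:dangulations} (equivalently, through the classical correspondence between separating decompositions of quadrangulations and plane bipolar orientations, see~\cite{Fraysseix:Topological-aspect-orientations,Felsner:Baxter-family,Bonichon:Baxter-permutations}), under which each of the additive functionals above becomes the number of occurrences of a fixed local pattern in a tree-like class with an algebraic generating function. Marking the pattern by a secondary variable $u$ gives a bivariate generating function $F(z,u)$ that is algebraic with a square-root-type dominant singularity depending analytically on $u$ near $1$; singularity analysis of $\partial_uF(z,1)$ then yields $\mathbb{E}[N_{\mathrm{part}}]=n/16+O(1)$ and $\mathbb{E}[N_{\mathrm{full}}]=3n/16+O(1)$, exactly as in the grid-size analyses of~\cite{BGHPS04,Fu07b}. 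Strong concentration follows in either of two ways: from the quasi-powers framework, which produces a Gaussian limit law with variance $\Theta(n)$ together with an exponential large-deviation bound $\PP(|N-\mathbb{E}N|>\eps n)=O(e^{-c(\eps)n})$; or, more robustly, from the fact that each count is an additive functional with bounded toll of a conditioned simply-generated tree, such functionals being strongly concentrated.

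The main obstacle is the exact determination of the constants $1/16$ and $3/16$: this demands carrying out the bivariate singularity analysis (equivalently, computing the limiting distribution of the local configuration $(\ell_1,\ell_2)$ at a uniformly random inner vertex in the Benjamini–Schramm sense) with enough precision to read off the two pattern frequencies, whereas the concentration itself is soft once the encoding is available. A secondary technical point, which I expect to be routine, is to confirm that balanced reduction choices exist for $R_n$ and that the $O(1)$ boundary contributions coming from the four root-faces, from the degenerate coincidences hidden in the word ``not all distinct'' (beyond $\ell_1=0$ or $\ell_2=0$), and from the rounding in $\lceil N_{\mathrm{part}}/2\rceil$ do not affect the leading order $25n/32$.
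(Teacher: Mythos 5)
Your overall architecture is sound and matches the paper's: you correctly reduce the grid-size claim to strong concentration of the two face counts, and your local characterization of reducibility at the dual vertex (via the splitting $(\ell_1,\ell_2)$ of incoming edges, equivalently the degrees $(d_1,d_2)=(\ell_1+1,\ell_2+1)$ of the corresponding vertex of $Q$ in the two trees of the reduced Schnyder decomposition) is exactly the content of the paper's Lemma~\ref{lem:reducible-vertex}. The gap is in the central probabilistic step, which you leave as a plan and which, as proposed, would not go through. You assert that after a bijective encoding the counts become pattern occurrences ``in a tree-like class with an algebraic generating function with a square-root-type dominant singularity.'' But the class in question is the set $\mR_n$ of $4$-regular maps equipped with an even regular decomposition, which is equinumerous with separating decompositions of quadrangulations and counted by Baxter-type numbers of order $\Theta(8^n n^{-4})$; the polynomial correction $n^{-4}$ is incompatible with a generic square-root singularity, so the generating function is not algebraic of the type you need, and neither the quasi-powers framework nor the ``additive functional of a conditioned simply-generated tree'' route applies off the shelf (the object is a \emph{pair} of intertwined trees, not a single simply-generated tree). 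You also explicitly defer the computation of the constants $1/16$ and $3/16$, which is the heart of the statement.

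The paper avoids all of this with a more elementary device. Using the encoding of $(Q,F)$ by the triple of degree sequences $(\alpha,\beta,\gamma)$ (Lemma~\ref{lem:F1F2}), each valid triple sums to $n$ in each coordinate, so under independent $2$-geometric variables every valid triple has probability exactly $8^{-n}$; hence the uniform measure on $\mE_n$ is the i.i.d.\ measure conditioned on the (valid-encoding) event, whose probability is at least $\kappa n^{-4}$ by the Baxter asymptotics. Concentration of $\pf$ and $\ff$ in the unconditioned i.i.d.\ model is just the law of large numbers (with $\PP((B_i,C_i)=(2,2))=1/16$ and $\PP(B_i\ge 2,C_i\ge 2,(B_i,C_i)\ne(2,2))=3/16$ over the roughly $n/2$ white vertices, then doubled for black faces), and an exponentially small probability survives division by $\kappa^{-1}n^{4}$. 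If you want to complete your proof, you should replace the singularity-analysis step by this conditioning argument (or supply an actual bivariate analysis adapted to the Baxter singularity, which would be substantially harder); as written, the key quantitative claims are asserted rather than derived, and the analytic framework invoked does not fit the enumerative facts.
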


The rest of the subsection is dedicated to the proof of Proposition~\ref{prop:reduc}. In order to use some results from the literature we go to the dual setting. We denote by $\mF_n$ the set of pairs $(Q,F)$ where $Q$ is a quadrangulation with $n$ faces and $F$ is an even Schnyder decomposition of $Q$. For $(Q,F)$ in $\mF_n$, we denote  $(G,R)=(Q,F)^*$ if $G=Q^*$ and $R=\chi(F)$. For an element  $(Q,F)$ in $\mF_n$, we consider the forests $(F_1',F_2')$ of the reduced Schnyder decomposition corresponding to $F$, and we denote $\tau(Q,F)=(T_1',T_2')$ where  $T_1'$ (resp. $T_2'$) is the tree obtained from $F_1'$ (resp. $F_2'$) by adding the external edges $\{u_1,u_2\}$ and  $\{u_1,u_4\}$ (resp. $\{u_3,u_2\}$ and  $\{u_3,u_4\}$). It is easy to rephrase the property for a face of to be reducible in the dual setting; see Figure~\ref{fig:proof-drawing-face} for an illustration of duality.

\begin{lem}\label{lem:reducible-vertex}
Let $(G,R)\in\mR_n$, let $(Q,F)=(G,R)^*$ and let $(T_1',T_2')=\tau(Q,F)$. Let $f$ be a non-root face of the 4-regular graph $G$, let $v$ be the corresponding internal vertex of the quadrangulation $Q$, and let $d_1$ and $d_2$ be the degree of $v$ in the trees $T_1'$ and $T_2'$ respectively. Then, the face $f$ is partly reducible (resp. fully reducible) if and only if $(d_1, d_2)=(2,2)$  (resp.  $d_1\geq 2$, $d_2\geq 2$ and $(d_1, d_2)\neq (2,2)$).
\end{lem}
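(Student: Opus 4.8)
The plan is to translate, via duality, the description of a non-root face $f$ of $G$ given by Property~(3) of Theorem~\ref{thm:bentline} into a statement about the colors carried by the edges of $Q$ incident to the corresponding vertex $v$, and then to read off $d_1$ and $d_2$. I treat the case of a black face $f$ in detail; the white case is symmetric via the relabelling already provided in Property~(3). Recall that $\tau(Q,F)=(T_1',T_2')$ is obtained from the reduced forests $(F_1',F_2')=(F_2,F_4)$ by adjoining external edges only, none of which is incident to the internal vertex $v$; hence $d_1=\deg_{F_2}(v)$ and $d_2=\deg_{F_4}(v)$. Since $F$ is an even Schnyder decomposition, Lemma~\ref{lem:characterization-even-Schnyder} shows that the two present colors of every internal edge of $Q$ have different parity, so each such edge carries exactly one even color ($2$ or $4$); as all edges at the internal vertex $v$ are internal, this gives the key identity $d_1+d_2=\deg_Q(v)=\deg_G(f)$.

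First I would set up the edge-by-edge color correspondence around $f$. The edges of $G$ incident to $f$ are dual to the edges of $Q$ incident to $v$, and by Lemma~\ref{lem:alternative_def} the colors of a $G$-edge (in the regular decomposition $R$) are the \emph{missing} colors of the dual $Q$-edge (in $F$), so the present colors of the $Q$-edge are the complementary pair. Reading off the clockwise color pattern of Property~(3) for a black face---special edge $f_a=\{a,a'\}$ with colors $\{3,4\}$, then a run of $s$ edges from $a'$ to $b$ with colors $\{2,3\}$, special edge $f_b=\{b,b'\}$ with colors $\{1,2\}$, then a run of $t$ edges from $b'$ to $a$ with colors $\{1,4\}$---and taking complements, I find that the dual of $f_a$ and the duals of the $b'$-to-$a$ run carry even color $2$, while the dual of $f_b$ and the duals of the $a'$-to-$b$ run carry even color $4$. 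Consequently
\[
d_1=\deg_{F_2}(v)=t+1,\qquad d_2=\deg_{F_4}(v)=s+1,
\]
where $s,t\geq 0$ are the lengths of the two runs, and $\deg_G(f)=s+t+2$.

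The remaining step is to match distinctness of the four vertices $f_x^-,f_x^+,f_y^-,f_y^+=a,b,a',b'$ with the inequalities $d_1,d_2\geq 2$. A run of length $0$ collapses its endpoints, so $s=0\iff a'=b$ and $t=0\iff a=b'$, that is $d_2=1\iff a'=b$ and $d_1=1\iff a=b'$. The four remaining possible coincidences are excluded unconditionally: $a\neq a'$ and $b\neq b'$ since $G$ is loopless, while $a\neq b$ and $a'\neq b'$ follow from the relations $x(a)+1=x(b)$ and $y(a')+1=y(b')$ of Property~(3). Hence $a,b,a',b'$ are pairwise distinct if and only if $s\geq 1$ and $t\geq 1$, i.e. $d_1\geq 2$ and $d_2\geq 2$. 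Combining this with $\deg_G(f)=d_1+d_2$ gives the claim: $f$ is partly reducible exactly when the four vertices are distinct and are the only ones on the boundary of $f$, i.e. $d_1,d_2\geq 2$ and $d_1+d_2=4$, which is $(d_1,d_2)=(2,2)$; and $f$ is fully reducible exactly when the four are distinct but the boundary of $f$ carries further vertices, i.e. $d_1,d_2\geq 2$ and $d_1+d_2>4$, which is $d_1,d_2\geq 2$ with $(d_1,d_2)\neq(2,2)$.

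The main obstacle I anticipate is the bookkeeping of the color correspondence: one must chase the even/odd parities through the complementation of Lemma~\ref{lem:alternative_def} and through the indexing $F_i'=F_{2i}$ of the reduced decomposition, and verify that the even colors genuinely separate each special edge from the \emph{opposite} run (rather than grouping a special edge with the wrong tree). Everything else reduces to the short count above, together with the observation---used to read ``only vertices around $f$'' as $\deg_G(f)=4$---that the vertices on the boundary of a face of $G$ are distinct because $Q$ is a simple quadrangulation.
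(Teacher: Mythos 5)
Your proof is correct, and it is essentially the argument the paper intends: the paper itself gives no written proof of this lemma (it only says the rephrasing in the dual setting is easy and points to Figure~\ref{fig:proof-drawing-face}), and what the figure illustrates is exactly your translation of the colour pattern of Property~(3) of Theorem~\ref{thm:bentline} around a face of $G$ into the colours of the edges of $Q$ at the dual vertex $v$, followed by the count $d_1+d_2=\deg_Q(v)=\deg_G(f)$ and the identification of the degenerate coincidences $a'=b$, $b'=a$ with $d_2=1$, $d_1=1$. Your write-up supplies the details the authors left to the reader, including the correct handling of the complementation in Lemma~\ref{lem:alternative_def} and the indexing $F_i'=F_{2i}$.
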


We now recall a result from~\cite{FuPoSc09}.

\begin{lem}\label{lem:F1F2}
Let $(Q,F)\in\mF_n$ and let $(T_1',T_2')=\tau(Q,F)$. The pair $(Q,F)$ is entirely determined by the plane tree $T_1'$ (considered as a plane tree rooted at the external corner of $u_1$) together with the degree in $T_2'$ of its white vertices.
\end{lem}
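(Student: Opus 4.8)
The plan is to recover the blue tree $T_2'$, together with its planar embedding, from the embedded red tree $T_1'$ and the white degrees; this suffices because, by Theorem~\ref{thm:compact-even-Schnyder} together with the bijections of Sections~\ref{sec:equiv-definitions} and~\ref{sec:even}, the pair $(Q,F)$ is equivalent to the separating decomposition $(T_1',T_2')=\tau(Q,F)$. Note first that $T_1'$ is given with its embedding and with its root-corner at $u_1$, so the proper $2$-coloring of its vertices is forced ($u_1$ is black, and colors alternate with depth); in particular ``white vertex of $T_1'$'' is intrinsic to the abstract rooted plane tree, and it only remains to locate the edges of $T_2'$.

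The first step is to record the rigidity coming from Property~(iii') of reduced Schnyder decompositions. Around each internal vertex the two parent-edges $e_1'$ (red, toward $u_1$) and $e_2'$ (blue, toward $u_3$) split the incident edges into two angular sectors, one carrying only the incoming red children and the other only the incoming blue children, with the clockwise pattern at a white vertex being the mirror image of the pattern at a black vertex. Consequently the whole rotation system of $Q$, and hence $(Q,F)$, is determined once one knows, at each vertex, the number of incident blue edges and the forced sector in which they sit. The key bipartite observation is that \emph{every blue edge has exactly one white endpoint}, so the blue edges are in bijection with the white incidences: the total number of blue edges equals $\sum_{w}d_2(w)$, and every blue edge is seen exactly once from the white side. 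This is precisely why the blue degrees of the \emph{black} vertices need not be supplied as input.

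The reconstruction is then a Schaeffer-type closure. At each white vertex $w$ of $T_1'$ I would attach $d_2(w)$ blue buds (stubs), placed in the single sector prescribed by Property~(iii'); since each blue edge has a unique white endpoint, these buds are in bijection with all of $T_2'$. Walking along the contour of $T_1'$, one then joins each blue bud to a black corner following the canonical non-crossing matching rule (attach every bud to the first admissible black corner reachable around the contour). This produces a set of chords in the unique face of $T_1'$; the black vertices simply collect whatever buds the closure assigns to them, so their blue degrees are outputs of the procedure rather than data. The resulting map is taken to be $Q$ with its blue edges, and $(Q,F)$ is read off from $(T_1',T_2')$.

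The step I expect to be the main obstacle is proving that this closure is well defined and genuinely inverts $\tau$. Concretely this requires two verifications. First, that along the contour of $T_1'$ the sequence of blue buds and of admissible black corners is \emph{balanced}, so that a complete non-crossing attachment exists and, by the standard ``innermost bud matches the nearest corner'' stack argument, is unique; the balance should follow from the Euler relation for the quadrangulation together with the fact that $T_2'$ is a tree oriented toward $u_3$ and the forced color-alternation around each vertex. Second, that each region closed off is a quadrilateral, which follows from the fixed local face pattern of a separating decomposition (two red and two blue sides per face, as in the duality of Lemma~\ref{lem:alternative_def}). Once these points are established, the closure gives a two-sided inverse to $\tau$ on the relevant data, proving that $(Q,F)$ is entirely determined by $T_1'$ and the white $T_2'$-degrees.
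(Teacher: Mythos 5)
The paper does not actually prove this lemma: it is recalled from the reference \cite{FuPoSc09}, so there is no in-paper argument to compare against. Your closure strategy (decorate $T_1'$ with blue buds at the white vertices, then match them to black corners) is indeed the right family of ideas for such encodings, and your preliminary observations are sound: the bipartition is forced by $T_1'$, every edge of $T_2'$ has exactly one white endpoint, and Property~(iii') does confine all $T_2'$-edges at an internal vertex to a single, determined corner of $T_1'$. But the proposal stops exactly where the content of the lemma lies, and several of the supporting claims you lean on are wrong.

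Concretely: (1) The root $u_3$ of $T_2'$ is \emph{not} a vertex of $T_1'$ (the tree $T_1'=F_1'\cup\{u_1u_2,u_1u_4\}$ spans all vertices except $u_3$). Your closure never creates it, so the buds whose blue parent is $u_3$ --- including those for the external edges $\{u_2,u_3\}$ and $\{u_3,u_4\}$, which \emph{are} counted in $d_2(u_2)$ and $d_2(u_4)$ --- have nowhere to attach, and the asserted bud/corner ``balance'' is false as stated; there is a surplus of exactly $\deg_{T_2'}(u_3)$ buds that must escape to a newly created vertex in the outer face. (2) Each internal black vertex offers a \emph{single} admissible corner which must absorb an arbitrary number of blue edge-ends, so this is not the standard one-bud-per-corner parenthesis matching, and the ``innermost bud matches nearest corner'' uniqueness argument does not apply: with contour order $b_1,b_2,c_1,c_2$ both $\{b_1c_2,\,b_2c_1\}$ and $\{b_1c_1,\,b_2c_1\}$ are non-crossing, so non-crossingness plus the corner constraints does not by itself force the matching. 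The whole point of the lemma is that the extra structure (the faces of $Q$ are quadrangles with a rigid colour pattern, $T_2'$ is a tree) pins the matching down, and that is precisely the step you defer. (3) The claim that each face has ``two red and two blue sides'' is false: analysing the colour pairs around a vertex of the dual $4$-regular graph shows that an internal face of $Q$ always has one forced $F_1'$-edge and one forced $F_2'$-edge in opposite positions, but the remaining two edges can both land in the same tree, giving $3{+}1$ faces. So the ``each closed region is a quadrilateral'' verification cannot be run from that premise. None of these is unfixable --- the lemma is true and is proved in \cite{FuPoSc09} by an opening/closure argument of essentially the kind you sketch --- but as written your proof has a genuine hole at its central step and would need the matching rule stated correctly (including the creation of $u_3$) and actually proved to invert $\tau$.
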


Let $(Q,F)\in\mF_n$ and let $(T_1',T_2')=\tau(Q,F)$.  
Let $r+1$ and $s+1$ be the number of black and white vertices respectively.
The quadrangulation $Q$ has $n$ faces and $n+2$ vertices, so that $r+s=n$. 
By definition, the tree $T_1'$ contains every vertex of $Q$ except the black vertex $u_3$. 
Let $\sigma=(v_1,\ldots,v_{n+1})$ be the sequence of vertices discovered  while walking
in clockwise order around $T_1'$ starting from the root. Let $(b_1,\ldots,b_r)$ be the subsequence of black vertices
 and let $(w_1,\ldots,w_{s+1})$ be the subsequence of white vertices in $\sigma$.
 Let $\alpha=(\al_1,\ldots,\al_r)$ be the sequence of degrees of $(b_1,\ldots,b_r)$ in $T_1'$, and 
 let $\beta=(\be_1,\ldots,\be_{s+1})$ and $\ga=(\ga_1,\ldots,\ga_{s+1})$ be the sequence of degrees of $(w_1,\ldots,w_{s+1})$ in $T_1'$ and in $T_2'$.
 It is a classical exercise to show that $T_1'$ can be reconstructed from
 the degree-sequence of $(v_1,\ldots,v_{n+1})$ and similarly $T_1'$ can be reconstructed
 from the two degree-sequences $\alpha$ and $\beta$. 
Hence, by Lemma~\ref{lem:F1F2}, the triple $(\alpha,\beta,\gamma)$ completely encodes $(Q,F)$, hence also $(G,R)=(Q,F)^*$.
Note also that
 \begin{equation}\label{eq:sumIJK}
 \sum_{i=1}^r\al_i~=~\sum_{i=1}^{s+1}\be_i~=~\sum_{i=1}^{s+1}\ga_i~=~n,
 \end{equation}
since the trees $T_1'$ and $T_2'$ both have $n$ edges.

Let $\mT_{r,s}$ be the set of triples $(\alpha,\beta,\gamma)$ of positive integer sequences of respective length $r$, $s+1$ and $s+1$. For an element $T=(\al,\be,\ga)$ in $\mT_{r,s}$, we denote by $\pf(T)$ (resp. $\ff(T)$) the number of indices $i$ in $[s+1]$ such that $(\be_i,\ga_i)=(2,2)$ (resp. $\be_i\geq 2$, $\ga_i\geq 2$ and $(\be_i,\ga_i)\neq (2,2)$). By Lemma \ref{lem:reducible-vertex}, if the triple $(\al,\be,\ga)\in\mT_{r,s}$ encodes a even regular decomposition $(G,R)\in \mR_n$, then $\pf(T)$ and $\ff(T)$ are respectively the number of partly reducible and fully reducible white faces of $(G,R)$. 

We will now study a random variable $T_n$ taking value in the set $\mT_n:=\cup_{r+s=n}T_{r,s}$.
We call \emph{2-geometric} a random variable taking each positive integer value $k$ with probability $2^{-k}$. 
Let $A_{i},B_{i},C_{i}$, $i\in[n]$ be independent 2-geometric random variables. 
We define the random variable $T_n$ taking value in $\mT_n$ by setting $T_n=(\al,\be,\ga)$, where 
 $\al=(A_{1},\ldots,A_{r})$, $\be=(B_{1},\ldots,B_{s+1})$, $\ga=(C_{1},\ldots,C_{s+1})$, and  $r$ is the smallest integer such that $A_{1}+\ldots+A_{r}\geq n$ and $s=n-r$. 

\begin{lem}\label{lem:Anconc}
The random variables $\pf(T_n)$ and $\ff(T_n)$ are  strongly concentrated around $n/32$ and  $3n/32$ respectively.
\end{lem}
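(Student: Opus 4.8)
The plan is to first show that the random index $r$ (hence $s=n-r$) is strongly concentrated around $n/2$, and then to treat $\pf(T_n)$ and $\ff(T_n)$ as sums of independent Bernoulli indicators over the first $s+1$ terms of the $(B_i,C_i)$ sequence. The key structural fact that makes everything routine is that the number of summands $s+1$ depends only on the $A_i$, which are independent of the $B_i,C_i$.

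First I would analyze $r$. Writing $S_k=A_1+\cdots+A_k$, the variable $r$ is the first passage time of the walk $S_k$ above level $n$, so $\{r>m\}=\{S_m<n\}$. Each $A_i$ is $2$-geometric with $\mathbb{E}[A_i]=2$ and finite moment generating function $\mathbb{E}[e^{tA_i}]=\frac{e^t}{2-e^t}$ for $t<\log 2$. Hence a standard Chernoff/Cram\'er estimate shows that for any $\eta>0$ both $\mathbb{P}(S_{\lceil(1+\eta)n/2\rceil}<n)$ and $\mathbb{P}(S_{\lfloor(1-\eta)n/2\rfloor}\geq n)$ have exponentially small probability. Therefore $r$, and with it $s$, is strongly concentrated around $n/2$.

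Next, for $\pf(T_n)$ set $X_i=\mathbf{1}[(B_i,C_i)=(2,2)]$. These are i.i.d.\ Bernoulli variables with $\mathbb{P}(X_i=1)=\mathbb{P}(B_i=2)\,\mathbb{P}(C_i=2)=\tfrac14\cdot\tfrac14=\tfrac1{16}$, and they are independent of $(A_i)$, hence of $s$. Thus $\pf(T_n)=\sum_{i=1}^{s+1}X_i$ is an i.i.d.\ Bernoulli sum over a random number of terms governed by independent randomness. Fixing $\eps>0$ and writing $I_\eta=[(1-\eta)n/2,(1+\eta)n/2]$, I would split
\[
\mathbb{P}\!\left(\bigl|\pf(T_n)-\tfrac{n}{32}\bigr|>\tfrac{\eps n}{32}\right)
\leq \mathbb{P}(s\notin I_\eta)
+\mathbb{P}\!\left(\bigl|\pf(T_n)-\tfrac{n}{32}\bigr|>\tfrac{\eps n}{32},\ s\in I_\eta\right).
\]
The first term is exponentially small by the previous step. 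For the second, condition on the value of $s$ (equivalently on $(A_i)$): for each fixed $s\in I_\eta$ the conditional mean of $\pf(T_n)$ equals $(s+1)/16$, which for $\eta$ small enough and $n$ large enough lies within $\eps n/64$ of $n/32$; a Chernoff bound for a sum of $s+1$ i.i.d.\ Bernoulli$(1/16)$ variables then bounds the deviation of $\pf(T_n)$ from this conditional mean by $\eps n/64$ with probability $O(e^{-cn})$, uniformly over $s\in I_\eta$. Summing the conditional estimates gives the desired exponential smallness.

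Finally, $\ff(T_n)$ is handled identically with $Y_i=\mathbf{1}[B_i\geq 2,\ C_i\geq 2,\ (B_i,C_i)\neq(2,2)]$, which are i.i.d.\ Bernoulli of parameter $\mathbb{P}(B_i\geq 2)\,\mathbb{P}(C_i\geq 2)-\mathbb{P}((B_i,C_i)=(2,2))=\tfrac14-\tfrac1{16}=\tfrac3{16}$; since $\tfrac{3}{16}\cdot\tfrac{n}{2}=\tfrac{3n}{32}$, the same conditioning argument yields strong concentration around $3n/32$. The only genuine obstacle is the variable number $s+1$ of summands, and the independence of $(B_i,C_i)$ from $(A_i)$ is precisely what reduces this to the standard fixed-length Bernoulli concentration used above.
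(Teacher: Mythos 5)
Your proposal is correct and follows essentially the same two-step strategy as the paper: first concentrate the stopping index $r$ (hence $s$) around $n/2$ using the exponential tails of the $2$-geometric variables, then concentrate the Bernoulli sums with the correct parameters $1/16$ and $3/16$. The only cosmetic difference is that you handle the random number of summands by conditioning on $s$ and applying a uniform Chernoff bound, whereas the paper exploits the monotonicity of the partial sums $Y_i$ to sandwich $Y_{s+1}$ between $Y_m$ and $Y_M$ for two deterministic indices; both are valid.
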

\begin{proof}
We first recall the law of large numbers:
for a sequence of independent random variables $(X_1,X_2,\ldots)$ each drawn under the same probability
distribution with exponential tail (i.e., $\mathbb{P}(k)=O(e^{-ck})$ for some $c>0$) and expectation $\mu$,
the random variable $Y_n:=X_1+\cdots+X_n$ is strongly concentrated around $\mu\, n$.
 
We fix $\eps>0$ and define  $m= \lfloor \tfrac{n}{2}(1-\tfrac{\eps}{2})\rfloor$ and $M=\lfloor \tfrac{n}{2}(1+\tfrac{\eps}{2})\rfloor$. Let $r(A)$ be the  smallest integer $r$ such that $A_1+\cdots+A_r\geq n$. The 2-geometric random variables have expectation 2 and exponential tails, so the law of large numbers implies that the event $r(A)\notin[m,M]$ (equivalently, $A_1+\cdots+A_{m-1}\geq n$ or  $A_1+\cdots+A_M<n$) has exponentially small probability. 

For $i\in [n]$, let $X_i$ be 1 if $(B_i,C_i)=(2,2)$ and 0 otherwise, and let $Y_i=X_1+\cdots+X_i$. The random variable $X_i$ has expectation $1/16$, so the law of large numbers implies that the probability that $Y_m<\frac{n}{32}(1-\eps)$ is exponentially small. Similarly, the probability that $Y_M>\frac{n}{32}(1+\eps)$ is exponentially small.
By definition  $\pf(T_n)=Y_{n+1-r(A)}$, hence the event $\{\pf(T_n)\notin[\tfrac{n}{32}(1\!-\!\eps),\tfrac{n}{32}(1\!+\!\eps)]\}$ is contained in the union
$$
 \{n+1-r(A)\notin[m,M]\}~\cup~\{Y_m\leq \tfrac{n}{32}(1\!-\!\eps)\}~\cup~\{Y_M\geq\tfrac{n}{32}(1\!+\!\eps)\},
$$
which has exponentially small probability. Thus $\pf(T_n)$ is strongly concentrated around $n/32$. Similarly, $\ff(T_n)$ is strongly concentrated around $3n/32$.
\end{proof}

We now conclude the proof of Proposition \ref{prop:reduc}. Let $\mE_n\subset \mT_n$ be the set of triples $(\al,\be,\ga)$ encoding
an even regular decomposition  $(G,R)\in\mR_n$.  
Let $(\al,\be,\ga)$ be any triple in $\mE_n$, and let $r$ and $s+1$ be the respective lengths of the sequences~$\al$ and~$\be$. The event $T_n=(\al,\be,\ga)$ occurs if and only if for all $i\in[r]$, $A_i=\al_i$ and for all $i\in[s+1]$, $B_{i}=\be_i$ and $C_{i}=\ga_i$. By \eqref{eq:sumIJK}, this event has probability $8^{-n}$ for any triple $(\al,\be,\ga)$ in $\mE_n$. Thus, by conditioning $T_n$ to belong to $\mE_n$, we obtain a uniformly random element of $\mE_n$. 

Let $W_n$ be the number of partially reducible white faces in the uniformly random element $R_n\in\mR_n$. The random variable $W_n$ is distributed as $\pf(E_n)$, where $E_n$ is a uniformly random element of $\mE_n$. Thus, for any $\eps>0$, 
$$
\begin{array}{lll}
\PP\left(W_n\notin \left[\frac{(1-\eps)n}{32},\frac{(1+\eps)n}{32}\right]\right)&=&
\PP\left(\pf(T_n)\notin \left[\frac{(1-\eps)n}{32},\frac{(1+\eps)n}{32}\right]~|~ T_n\in \mE_n\right)\\
&\leq &\displaystyle \frac{\PP\left(\pf(T_n)\notin \left[\frac{(1-\eps)n}{32},\frac{(1+\eps)n}{32}\right]\right)}{P(T_n\in \mE_n)}.
\end{array}
$$
By Lemma \ref{lem:Anconc}, the probability  $\PP(\pf(T_n)\notin [(1-\eps)n/32,(1+\eps)n/32])$ is exponentially small. 
Moreover, it is shown in~\cite{Felsner:Baxter-asymptotics} that $|E_n|\equiv |\mR_n|\geq \kappa 8^nn^{-4}$ for a certain constant $\kappa>0$.
 Hence, $\PP(T_n\in\mE_n)=|E_n|8^{-n}\geq \kappa n^{-4}$. Thus, $\PP(W_n\notin [(1-\eps)n/32,(1+\eps)n/32])$ is exponentially small.
This concludes the proof that the number $W_n$ of partly reducible white faces in $R_n$ is strongly concentrated around $n/32$. We can prove similarly that the number of fully reducible white faces is strongly concentrated around $3n/32$. The cases of black faces can also be treated in the same way. This concludes the proof of Proposition~\ref{prop:reduc}.


\section{Conclusion and open questions}
We have shown that the definition of Schnyder decompositions (and their incarnations as orientations or corner-labellings) originally considered for simple triangulations and simple quadrangulations can actually be generalized to $d$-angulations of girth $d$ for all $d\geq 3$. In the case $d=3$, Schnyder woods can be defined on the larger, and self-dual, class of 3-connected plane graphs~\cite{F01},
with the nice feature that a Schnyder wood admits a dual \emph{in the same class}.
We have defined dual Schnyder decompositions for $d\geq 3$, but these dual (regular)
 structures are not in the same class (the Schnyder decomposition is on a
$d$-angulation, whereas the dual regular decomposition is on a $d$-regular plane graph).
Therefore we ask the following question:

\begin{Question}
For $d\geq 4$, is there a self-dual class $\mathcal{C}_d$ of plane graphs containing the $d$-angulations of girth $d$ such that the definition of Schnyder decompositions can be extended to $\mathcal{C}_d$, and be stable under duality?
\end{Question}

Our second question is related to the computational aspect of $d/(d-2)$-orientations.
Our existence proof (Theorem~\ref{thm:exists}) does not yield an algorithm. However, it is known that the computation of an $\alpha$-orientation can be reduced to a flow problem, and has polynomial complexity in the number of edges (see~\cite{Felsner:lattice}). Hence, for fixed $d\geq 3$ a $d/(d-2)$-orientation can be computed in polynomial time. For $d=3$ or $d=4$ it is known that a $d/(d-2)$-orientation (and the associated Schnyder decomposition) can be computed in \emph{linear time}, hence the following question:

\begin{Question}
For $d\geq 5$, is it possible to compute a $d/(d-2)$-orientation of a $d$-angulation in time linear in the number of vertices?
\end{Question}

The next questions are about drawing algorithms. In Section~\ref{sec:drawing} we have presented, based on dual of Schnyder decompositions,  two drawing algorithms for 4-regular plane graphs of mincut $4$. 
Like Schnyder's straight-line drawing algorithm for triangulations~\cite{Schnyder:wood2}
and the more recent straight-line drawing algorithm by Barri\`ere and Huemer~\cite{Barriere-Huemer:4-Labelings-quadrangulation}
for simple quadrangulations, our procedure relies on face-counting operations.
We raise the following questions:
\begin{Question}
Is there a more general planar drawing algorithm based on Schnyder decompositions which works for any integer $d\geq 3$ (either, for $d$-angulations of girth $d$, or for $d$-regular plane graphs of mincut $d$)? 
\end{Question}

\begin{Question}
For $p\geq 2$, is there an algorithm based on Schnyder decompositions for placing the non-root vertices of a $2p$-regular graph of mincut $2p$ on the lattice $\mathbb{Z}^p$, and drawing the non-root edges by a sequence of segments in the direction of the axes with at most one bend per edge? What about $p-1$ bends per edges?
\end{Question}

\bibliographystyle{plain}

\bibliography{biblio-Schnyder}

\begin{thebibliography}{10}

\bibitem{Barriere-Huemer:4-Labelings-quadrangulation}
L.~Barri\`ere and C.~Huemer.
\newblock 4-labelings and grid embeddings of plane quadrangulations.
\newblock In {\em Proceedings of the 17th {I}nternational {S}ymposium on
  {G}raph {D}rawing}, pages 413--414, 2009.

\bibitem{OB:Catalan-intervals-realizers}
O.~Bernardi and N.~Bonichon.
\newblock Intervals in {C}atalan lattices and realizers of triangulations.
\newblock {\em J. Combin. Theory Ser. A}, 116(1):55--75, 2009.

\bibitem{Bernardi-Fusy:dangulations}
O.~Bernardi and \'E. Fusy.
\newblock A bijection for triangulations, quadrangulations, 
  pentagulations, etc.
\newblock arXiv:1007.1292, 2010. 

\bibitem{Bi98}
T.~Biedl.
\newblock New lower bounds for orthogonal drawings.
\newblock {\em Journal of Graph Algorithms and Applications}, 2(7):1--31, 1998.

\bibitem{Bi06}
T.~Biedl.
\newblock Optimal orthogonal drawings of triconnected plane graphs.
\newblock In {\em 8th Can. Conf. Comp. Geometry, volume 5 of International
  Informatics Series}, pages 306--311, 2006.

\bibitem{Biedl98abetter}
T.~Biedl and G.~Kant.
\newblock A better heuristic for orthogonal graph drawings.
\newblock {\em Comput. Geom. Theory Appl}, 9:159--180, 1998.

\bibitem{Bonichon:Baxter-permutations}
N.~Bonichon, \'E. Fusy, and M.~Bousquet-M\'elou.
\newblock {B}axter permutations and plane bipolar orientations.
\newblock {\em S\'em. Lothar. Combin.}, B61Ah, 2010.

\bibitem{BGHPS04}
N.~Bonichon, C.~Gavoille, N.~Hanusse, D.~Poulalhon, and G.~Schaeffer.
\newblock Planar graphs, via well-orderly maps and trees.
\newblock {\em Graphs Combin.}, 22(2):185--202, 2006.

\bibitem{Brehm00}
E. Brehn.
\newblock 3-orientations and Schnyder 3-tree-decompositions.
\newblock {\em Master thesis}, {F}reie {U}niversit\"at {B}erlin, 2000.

\bibitem{Fraysseix:Topological-aspect-orientations}
H.~de~Fraysseix and P.~Ossona~de Mendez.
\newblock On topological aspects of orientations.
\newblock {\em Discrete Math.}, 229:57--72, 2001.

\bibitem{Edmonds:decomposition-forests}
J.~Edmonds.
\newblock Edge-disjoint branchings.
\newblock In {\em Combinatorial Algorithms}, pages 91--96, 1972.

\bibitem{F01}
S.~Felsner. 
\newblock Convex drawings of planar graphs and the order dimension of
  3-polytopes.
\newblock {\em Order}, 18:19--37, 2001.

\bibitem{Felsner:lattice}
S.~Felsner.
\newblock Lattice structures from planar graphs.
\newblock {\em Electron. J. Combin.}, 11(1), 2004.

\bibitem{Felsner:Baxter-asymptotics}
S.~Felsner, \'E. Fusy, M.~Noy.
\newblock Asymptotic Enumeration of Orientations.
\newblock {\em Discr. Math. and Theor. Comp. Sci.}, 12:2,  249--262, 2010.

\bibitem{Felsner:Baxter-family}
S.~Felsner, \'E. Fusy, M.~Noy, and D.~Orden.
\newblock Bijections for {B}axter families and related objects.
\newblock To appear in \emph{J. Combin. Theory Ser. A}, DOI: 10.1016/j.jcta.2010.03.017. 

\bibitem{FeHuKa}
S.~Felsner, C.~Huemer, S.~Kappes, and D.~Orden.
\newblock Binary labelings for plane quadrangulations and their relatives.
\newblock {\em Discr. Math. and Theor. Comp. Sci.}, 12:3, 115--138, 2010. 

\bibitem{FeKn10}
S.~Felsner and K.~Knauer.
\newblock {ULD}-lattices and {D}elta-bonds.
\newblock {\em Comb. Prob. and Comp.}, 18:707--724, 2009.

\bibitem{Fu07b}
\'E. Fusy.
\newblock Transversal structures on triangulations: {A} combinatorial study and
  straight-line drawings.
\newblock {\em Discrete Math.}, 309:1870--1894, 2009.

\bibitem{FuPoScL}
\'E. Fusy, D.~Poulalhon, and G.~Schaeffer.
\newblock Dissections, orientations, and trees, with applications to optimal
  mesh encoding and to random sampling.
\newblock {\em Transactions on Algorithms}, 4(2):Art. 19, April 2008.

\bibitem{FuPoSc09}
\'E. Fusy, D.~Poulalhon, and G.~Schaeffer.
\newblock Bijective counting of plane bipolar orientations and {S}chnyder
  woods.
\newblock {\em European J. Combin.}, 30(7):1646--1658, 2009.

\bibitem{Ka96}
G.~Kant.
\newblock Drawing planar graphs using the canonical ordering.
\newblock {\em Algorithmica}, 16:4--32, 1996.

\bibitem{Mendez:these}
P.~Ossona de~Mendez.
\newblock {\em Orientations bipolaires}.
\newblock PhD thesis, \'Ecole des Hautes \'Etudes en Sciences Sociales, Paris,
  1994.

\bibitem{Poulalhon:triang-3connexe+boundary}
D.~Poulalhon and G.~Schaeffer.
\newblock Optimal coding and sampling of triangulations.
\newblock {\em Algorithmica}, 46(3-4):505--527, 2006.

\bibitem{Schnyder:wood1}
W.~Schnyder.
\newblock Planar graphs and poset dimension.
\newblock {\em Order}, 5(4):323--343, 1989.

\bibitem{Schnyder:wood2}
W.~Schnyder.
\newblock Embedding planar graphs in the grid.
\newblock {\em Symposium on Discrete Algorithms (SODA)}, pages 138--148, 1990.

\bibitem{Ta87}
R.~Tamassia.
\newblock On embedding a graph in a grid with the minimum number of bends.
\newblock {\em SIAM J. Comput.}, 16:421--444, 1987.

\bibitem{Ta91}
R.~Tamassia, I.G.~Tollis and J.S.~Vitter.
\newblock Lower Bounds for Planar Orthogonal Drawings of Graphs.
\newblock {\em Information Processing Letters}, vol. 39, pp. 35-40. 

\end{thebibliography}

\end{document}